\providecommand{\todoneukamm}[1]{\todo[inline,author={SN}]{#1}}
\providecommand{\todosander}[1]{\todo[inline,author={OS},color=green]{#1}}
\newtheorem{theorem}{Theorem}[section] 
\newtheorem*{proposition*}{Proposition}
\newtheorem{definition}[theorem]{Definition}
\newtheorem*{definition*}{Definition}
\newtheorem{lemma}[theorem]{Lemma}
\newtheorem*{lemma*}{Lemma}
\newtheorem{remark}[theorem]{Remark}
\newtheorem{example}[theorem]{Example}
\newcommand{\expect}[1]{\left\langle {#1} \right\rangle}
\newcommand{\e}{_{\varepsilon}}
\newcommand{\eps}{\varepsilon}
\renewcommand{\hom}{{\mathrm{hom}}}
\newcommand{\p}{_{\mathrm{pot}}}
\newcommand{\brac}[1]{\left({#1}\right) }
\newcommand{\cb}[1]{\left\lbrace {#1} \right\rbrace}
\newcommand{\R}{\mathbb{R}}
\newcommand{\Z}{\mathbb{Z}}
\newcommand{\N}{\mathbb{N}}
\newcommand{\E}{\mathcal{E}}
\newcommand\Id{\operatorname{Id}}
\DeclareMathOperator*{\argmin}{arg\,min}
\newcommand{\rcal}{\mathcal{R}}
\newcommand{\wto}{\rightharpoonup}
\DeclarePairedDelimiter{\abs}{\lvert}{\rvert}
\DeclarePairedDelimiter{\norm}{\lVert}{\rVert}
\newcommand{\setword}[2]{%
  \phantomsection
  #1\def\@currentlabel{\unexpanded{#1}}\label{#2}%
}
\def\Xint#1{\mathchoice
{\XXint\displaystyle\textstyle{#1}}%
{\XXint\textstyle\scriptstyle{#1}}%
{\XXint\scriptstyle\scriptscriptstyle{#1}}%
{\XXint\scriptscriptstyle\scriptscriptstyle{#1}}%
\!\int}
\def\XXint#1#2#3{{\setbox0=\hbox{$#1{#2#3}{\int}$ }
\vcenter{\hbox{$#2#3$ }}\kern-.6\wd0}}
\def\dashint{\Xint-}    % Single-dashed integral sign
\def\wtto{\stackrel{2}{\rightharpoonup}}
\def\stto{\stackrel{2}{\rightarrow}}
\def\ctto{\stackrel{2c}{\longrightarrow}}
\title{Representative volume element approximations in elastoplastic spring networks}
\author{Sabine Haberland\thanks{\href{mailto:sabine.haberland@tu-dresden.de}{sabine.haberland@tu-dresden.de}}}
\author{Patrick Jaap\thanks{\href{mailto:patrick.jaap@tu-dresden.de}{patrick.jaap@tu-dresden.de}}}
\author{Stefan Neukamm\thanks{\href{mailto:stefan.neukamm@tu-dresden.de}{stefan.neukamm@tu-dresden.de}}}
\author{Oliver Sander\thanks{\href{mailto:oliver.sander@tu-dresden.de}{oliver.sander@tu-dresden.de}}}
\author{Mario Varga\thanks{\href{mailto:mario.varga@tu-dresden.de}{mario.varga@tu-dresden.de}}}
\affil{Fakult\"at Mathematik, Technische Universit\"at Dresden}
\begin{document}
\maketitle

\begin{abstract}
  We study the large-scale behavior of a small-strain lattice model for a network composed of elastoplastic springs with random material properties.
  We formulate the model as an evolutionary rate independent system (ERIS).
  In an earlier work we derived a homogenized continuum model, which has the form of linearized elastoplasticity, as evolutionary $\Gamma$-limit as the lattice parameter tends to zero.
  In the present paper we introduce a periodic representative volume element approximation (RVE) for the  homogenized system.
  As a main result we prove convergence of the RVE approximation as the size of the RVE tends to infinity.
  We also show that the hysteretic stress--strain relation of the effective system can be described with help of a generalized Prandt--Ishlinskii operator,
  and we prove convergence of a periodic RVE approximation for that operator.
  We combine the RVE approximation with a numerical scheme for rate-independent systems and obtain a computational scheme that we use to numerically investigate the homogenized system in the specific case when the original network is given by a two-dimensional lattice model.
  We simulate the response of the system to cyclic and uniaxial, monotonic loading, and numerically investigate the convergence rate of the periodic RVE approximation.
  In particular, our simulations show that the RVE error decays with the same rate as the RVE error in the static case of linear elasticity.
  \smallskip

  \noindent
  \textbf{Keywords:} stochastic homogenization, elastoplasticity, spring network, representative volume element, Prandtl-Ishlinskii operator, numerical simulation\\
  \textbf{MSC2020:} 74C05, 74Q10, 74A40

\end{abstract}

\tableofcontents
\section{Introduction}
Spring network models play an important role in mechanics for two reasons:
On one hand, such models serve as a numerical finite-difference approximation of continuum models.
On the other hand, they are used as reduced models for materials with an inherent microstructural discreteness, e.g., truss-structures or polymers, e.g., see~\cite{hahn2010discrete,ostoja2002lattice,jagota1994spring,alicandro2011integral}.
Often such materials are heterogeneous on the microscale.
This complicates the numerical treatment of such systems.
In situations when the microstructure of the material features some spatial regularity, in particular, when it is periodic or random and statistically homogeneous,
the homogenization approach can be used to derive a homogenized model that features the same large-scale behavior as the discrete, microstructured  model.

In this paper, we consider a small-strain model for a network composed of elastoplastic springs with random material properties with a stationary and ergodic distribution.
More precisely, we consider a spring network that in its stress-free reference configuration is represented by a lattice graph with node set $\eps\Z^d$ and edge set $\eps\mathsf E$ where $0<\eps\ll1$ denotes a small a scaling parameter.
In our model each edge $\mathsf e\in\eps\mathsf E$ represents an elastoplastic spring that we describe by a zero-dimensional model of linearized plasticity model with (Prager's type) kinematic hardening.
Each spring comes with three material parameters (the elastic modulus $a$, the hardening parameter $h$, and the yield strength $\sigma_{\rm yield}$) that are chosen randomly with a stationary and ergodic distribution.
We consider a model that describes the evolution of the network contained in a macroscopic domain $Q\subset\R^d$.
The system is described by a pair $y_\eps=(u_\eps,p_\eps)$ of two kinematic variables: The displacement $u_\eps: \varepsilon\Z^d\cap Q \to \R^d$ and $p_\eps: \eps\mathsf E\cap Q \to \R$, which describe the plastic strain longitudinal to the springs.
As outlined in Section~\ref{sec:setting}, we describe the evolution of the entire network by an evolutionary rate independent system (ERIS) of the form
\begin{equation*}
0  \in \partial \mathcal{R}_{\varepsilon}(\dot{y}\e(t))+ D_y\E_{\varepsilon}(t, y\e(t)),
\end{equation*}
where $\mathcal{R}_{\varepsilon}$ denotes a positively $1$-homogeneous dissipation functional, $\mathcal{E}\e$ is a quadratic energy functional.  $\partial$ and $D_y$ denote the subdifferential and Gateaux derivative of $\mathcal R_\eps$ and $\mathcal E_\eps(t,\cdot)$, respectively.
We refer to the Appendix~\ref{sec:eris} for the basic notion of ERIS and the existence theory of energetic solutions for ERIS.

In an earlier work, \textcite{neukamm2017stochastic} derived as an evolutionary $\Gamma$-limit for $\eps\to 0$ a homogenized, continuum model that comes in the form of an ERIS with energy functional $\mathcal E_{\hom}$ and dissipation functional $\rcal_\hom$, see Theorem~\ref{T:hom}.
The state variable $y_{\hom}$ of the homogenized ERIS contains three kinematic variables: The macroscopic displacement $u:Q\to\R^d$ and two internal variables, $p$ and $\chi_s$ that describe the internal state of the system. While $p$ describes the configuration of the plastic deformation on the microscopic scale, $\chi_s$ can be viewed as a quantity that describes microscopic fluctuations of the elastic strain of the system, see Section~\ref{S:two-scale_system} for further explanations.
The two internal variables $(p,\chi_s)$ are defined on the extended domain $Q\times\Omega$, where $\Omega$ denotes a probability space that is used to model the random configurations of the discrete system.
As we shall explain in Section~\ref{S:random-network}, each $\omega\in\Omega$ represents a realization of the random heterogeneous material.

As a first new result of this paper, we prove in Theorem~\ref{T:hyst_hom} that the homogenized ERIS can be brought into a form that is more standard in the engineering literature:
\begin{equation*}
  -\nabla\cdot\sigma_{\hom}(t,x)=l(t,x)\qquad\text{ in a distributional sense in $Q$},
\end{equation*}
with stress--strain relation given by
\begin{equation*}
  \sigma_{\hom}(t,x)=\mathcal W_{\hom}[\operatorname{sym}\nabla u(\cdot,x)](t)\qquad\text{for all $x\in Q$},
\end{equation*}
where $\mathcal W_{\hom}$ denotes a generalized Prandtl--Ishlisinskii hysteresis operator.
In particular, this means that the value of $\sigma_{\hom}(t,x)$ for a fixed $t>0$ and material point $x\in\R^d$ depends on the entire history of the strain $[0,t)\ni s\mapsto \operatorname{sym}\nabla u(s,x)$ at the material point $x\in\R^d$.
The theorem reveals that the hysteresis operator $\mathcal W_{\hom}$ only depends on the lattice geometry and the distribution of the random material parameters and can be defined by an ERIS with an abstract state space based on the probability space.
The hysteresis represents the homogenized material law of the system.

Since the homogenized system (and likewise the definition of $\mathcal W_{\hom}$) invokes the probability space $\Omega$,  numerical simulations of the homogenized system and of $\mathcal W_{\hom}$ require to resolve the dependence of variables on $\omega\in\Omega$.
A common approach in computational homogenization to tackle this difficulty is based on the method of the representative volume element approximation (RVE), see, e.g., \cite{feyel1999multiscale,miehe2002strain,schroder2013plasticity,nejad2019parallel}.
In this procedure a macroscopic discretization of $Q$ is considered and to each node of the discretization
another (discretized) representative volume domain is assigned. This domain is then used
for the computation of averaged stresses (see also \cite{eigel2015convergent} for a related stochastic approach).

In our paper we consider a periodic RVE:
To each macroscopic point $x\in Q$ we associate a discrete, representative volume
element with nodes $\Lambda_L\colonequals \Z^d\cap [0,L)^d$ and edges $\mathsf E\cap\Lambda_L$. Here, $L$ denotes a large integer that denotes the size of the RVE.
We see this set as a microscopic ``averaging domain''.
We approximate the pair of internal variables by fields on this representative volume that satisfy periodic boundary conditions.
The periodic RVE approximation comes itself in the form of an ERIS, see Section~\ref{S:RVE_def}.

As the main result of our paper we prove  convergence of the periodic RVE as $L\to\infty$:
In Theorems~\ref{thm:1462:d}~and~\ref{T:RVE-W} we do this for the homogenized ERIS and the hysteresis operator $\mathcal W_{\hom}$, respectively.
While convergence results for the approximation of (linear and monotone) elliptic equations and systems via RVEs are well-known, e.g., see \cite{owhadi2003approximation, bourgeat2004approximations, fischer2019optimal, gloria2015quantification}, to our knowledge, our theorems are the first analytical results that prove the convergence of an RVE-approximation for a rate-independent system in the case of random heterogeneous coefficients.
Our proof relies on the one hand on the general theory of ERIS and the concept of evolutionary $\Gamma$-convergence, see \cite{mielke2005evolution, mielke2016evolutionary, mielke2012generalized}, and on the other hand, on a transformation that is close to the stochastic unfolding procedure, see \cite{neukamm2017stochastic, varga2019stochastic, heida20191, heida20192, neukamm2020}.

In Sections~\ref{sec:numerical_approach}~and~\ref{S:exp} we explore the homogenized system with help of numerical simulations.
We focus on the hysteresis operator $\mathcal W_{\hom}$.
While $\mathcal W_{\hom}$ cannot be directly computed, we consider its periodic RVE approximation $\mathcal W_L$ introduced in Lemma~\ref{L:hyst_hom}.
For a sample $\omega\in\Omega$ (which represents a typical realization of the random material parameters), the hysteresis operator $\mathcal W_L^\omega$ can be evaluated
by numerically solving an ERIS with state variables $(p_L,\varphi_L)$ that  are on the finite  graph $\Lambda_L\cap \Z^d$.
As we explain in Section~\ref{sec:numerical_approach}, a time discretization leads to a sequence of incremental minimization problems for the variables $p_l$ and $\varphi_L$.
The spatial problems to be solved at each time step are strictly convex, coercive
minimization problems, and % The objective functional is the sum of a quadratic part
% (the elastic and hardening energies) with a weighted $\ell_1$-norm for the
% plastic dissipation. 
as such closely related to minimization problems that result
from finite element discretizations of continuous formulations of primal
small-strain plasticity~\cite{sander2020plasticity}.
We use the Truncated Non\-smooth Newton Multigrid (TNNMG) method \cite{sander2020plasticity,graeser2019tnnmg}
to solve these minimization problems.

Based on this computational scheme, we explore in Section~\ref{S:exp} the response of the homogenized system to cyclic and monotonic uniaxial loading in the case of a two-dimensional triangular lattice.
While the $0$-dimensional model of the individual springs only features the typical bilinear stress-strain curve of linear kinematic hardening, our simulation shows that the homogenized system features a more complex, nonlinear stress curve, see Figure~\ref{fig:Hystereseloop}. Furthermore, the observed  qualitative properties of the stress--strain relation gives rise to introduce three regimes that are discriminated by the configuration of the internal plastic strain.
More precisely, we consider an elastic and plastic regime corresponding to a state when none or all of the springs are plastically deformed, respectively. The remaining case is referred to as transitional regime.
It turns out that in the elastic and plastic regime the stress-strain relation is linear,
while in the transitional regime the stress-strain curve is nonlinear, see Figure~\ref{fig:SigmaHomTimeFirstThirdComp_ratio}.

The question of how large an RVE has to be chosen to become representative for the effective behavior of the mechanical system is widely discussed in the computational engineering literature, see, e.g.,~\cite{drugan1996micromechanics, kanit2003determination, schneider2022representative} and the references therein.
Our convergence results of Theorems~\ref{thm:1462:d}~and~\ref{T:RVE-W} show that the periodic RVE becomes representative in the limit $L\to\infty$, however, these results are purely qualitative and do not yield a rate of convergence.
In fact, only very little is known theoretically about the speed of convergence:
Existing results only cover the case of linear elasticity and are recent, see \cite{GO11,GO12, gloria2015quantification, BellaOtto16,fischer2019optimal, GNO5} where the theory is developed for elliptic equations and systems.

Motivated by this, in Section~\ref{S:error} we numerically investigate the error of the RVE approximation.
It is customary to decompose the RVE error into a random part, which monitors the fluctuations of the periodic RVE around its mean, and a systematic part, which measures the distance of the mean of the RVE to the homogenized system.
The main point in our numerical study is to identify the rate of convergence of the RVE error as $L\to\infty$.
In our numerical study we consider the case of monotonic loading and explore the RVE-error at the three loading states that correspond to the elastic, plastic and transitional regime, see Figures~\ref{fig:SystematicError}~and~\ref{fig:RandomErrorGridSize}.
Theory only exist for the elastic regime, which in fact can be treated by the existing elliptic theory.
Our simulations suggest that the scaling of the random and systematic error in the transitional and plastic regime are the same as in the elastic regime---this might be surprising: From the elliptic theory we know that the scaling of the error is governed by the mixing properties of the microstructural disorder of the system; however, in the non-elliptic regimes the system features a microstructural disorder that is not only determined by the randomness of the material properties, but also by the state of the internal variables which depends on the path history.

Parts of the results presented in this paper have already been announced in the PhD thesis of \textcite{varga2019stochastic} and the master's thesis of \textcite{haberland:2021}.

%%% Local Variables:
%%% mode: latex
%%% TeX-master: "rve_plasticity"
%%% End:

% LocalWords:  microstructural microscale microstructure ergodic sym
% LocalWords:  microstructured elastoplastic linearized Prager's RVE
% LocalWords:  subdifferential neukamm Prandtl Ishlisinskii RVEs
% LocalWords:  discretization discretized discretizations Multigrid
% LocalWords:  TNNMG uniaxial bilinear plastically varga haberland

\section{Rate-independent model for elastoplastic spring networks}\label{sec:setting}
In this section we introduce a model for the evolution of networks of linear
elastoplastic springs with random mechanical properties. As we shall see,
the system as well as the corresponding homogenized continuum models can be described
as evolutionary rate-independent systems (ERIS) with quadratic energies.
For the readers convenience we recall the basic theory of such ERIS in
the appendix~\ref{sec:eris}. We refer to \cite{mielke2004rate,mielke2005evolution,mielke2015rate}
for a concise introduction to the theory of rate-independent systems. Section~\ref{S:modeling-det} covers networks with heterogeneous
yet \emph{deterministic} material properties. The modeling of the random
material parameters is explained afterwards in Section~\ref{S:random-network}.

\subsection{Modeling of elastoplastic spring networks}\label{S:modeling-det}

\subsubsection{Lattice graphs and discrete derivatives}
We represent networks of elastoplastic springs as graphs whose nodes model material points
that can move in space, and whose edges correspond to the springs. More precisely,
we consider networks that can be described by lattice graphs of the form $(\Z^d,\mathsf E)$
where $\Z^d$ is the set of nodes and $\mathsf E\subset\Z^d\times\Z^d$ denotes
the set of (directed) edges. We assume that the set of edges is periodic
$\mathsf E=\{ (x,y)\,:\,x,y\in\Z^d\text{ with }y-x\in\mathsf E_0\}$,
with a finite subset $\mathsf E_0$ of $\Z^d\setminus\{0\}$. Given an edge $\mathsf e=(x,y)$,
we call $\overline{\mathsf e} \colonequals x$ and $\underline{\mathsf e} \colonequals y$
the start and end nodes, respectively. We also consider for $\varepsilon>0$
the scaled lattice graph $(\varepsilon\Z^d,\varepsilon\mathsf E)$.

For a function $f:\eps\Z^d\to\R^m$, $m \in \N$ we define the \emph{edge derivative}
$\nabla f:\varepsilon\mathsf E\to\R^m$ as
\begin{equation}\label{D:edgederiv}
  \nabla f(\mathsf e)\colonequals \frac{f(\overline{\mathsf e})-f(\underline{\mathsf e})}{|\overline{\mathsf e}-\underline{\mathsf e}|}.
\end{equation}
For $u:\eps\Z^d\to\R^d$ we define the \emph{projected edge derivative}
$\nabla_s u:\varepsilon\mathsf E\to\R$ via
\begin{equation}\label{D:D:edgeproj}
  \nabla_s u(\mathsf e)\colonequals \frac{\overline{\mathsf e}-\underline{\mathsf e}}{|\overline{\mathsf e}-\underline{\mathsf e}|}\cdot\nabla u(\mathsf e).
\end{equation}

\begin{remark}
If we think of $u$ as the \emph{displacement} of the network, then $\nabla_su$ can be seen as a discrete version of the infinitesimal strain tensor (see \cite[Introduction]{neukamm2017stochastic}). In particular, if $u$ is a linear displacement, say $u(x)=Fx$, then $\nabla_su(\mathsf e)=\frac{\overline{\mathsf e}-\underline{\mathsf e}}{|\overline{\mathsf e}-\underline{\mathsf e}|}\cdot F\frac{\overline{\mathsf e}-\underline{\mathsf e}}{|\overline{\mathsf e}-\underline{\mathsf e}|}$ and we see that $\nabla_s u$ only depends on the symmetric part ${\rm sym} F:=\tfrac12(F+F^\top)$.  
\end{remark}
\medskip

We assume that the lattice $(\Z^d,\mathsf E)$ is non-degenerate in the following sense:
\begin{enumerate}[label=(A\arabic*)]
\item \label{B0:pl:d} The set $\mathsf E_0$ contains (at least) the standard basis $\{e_1,\ldots,e_d\}$ of $\R^d$ and there exists a constant $C>0$ such that for all $u:\Z^d\to\R^d$ with compact support the discrete Korn inequality
  \begin{equation*}
    \sum_{\mathsf e\in\mathsf E}|\nabla u(\mathsf e)|^2\leq C\sum_{\mathsf e\in\mathsf E}|\nabla_s u(\mathsf e)|^2
  \end{equation*}
holds.
\end{enumerate}

\begin{remark}
  By considering affine displacements (multiplied with a cut-off function to yield a compact support), it is easy to see that Assumption~\ref{B0:pl:d} implies that 
  \begin{equation}\label{eq:12333}
    |\operatorname{sym} F|^2\leq C\sum_{e\in\mathsf E_0}\big(\tfrac{e}{|e|}\cdot F\tfrac{e}{|e|}\big)^2\qquad\text{for all }F\in\R^{d\times d}.
  \end{equation}
  This explains why we call the estimate in Assumption~\ref{B0:pl:d} a discrete Korn inequality.
  Mechanically, it means that the network can resist shear deformations. A basic example that violates Assumption \ref{B0:pl:d} is the lattice $\Z^d$ with nearest-neighbor edges $\mathsf E_0=\{e_1,\ldots,e_d\}$. In that case the right-hand side of \eqref{eq:12333} only controls the diagonal of $F$ but not the entire symmetric part.
  On the other hand, if $\mathsf E_0$ contains the set
  \begin{equation*}
    \{e_1,\ldots,e_d\}\cup \{e_j-e_i\,:\,1\leq i<j\leq d\,\}
  \end{equation*}
  then Assumption~\ref{B0:pl:d} is satisfied.
\end{remark}

\begin{figure}
  \begin{center}
  \includegraphics[width=5cm]{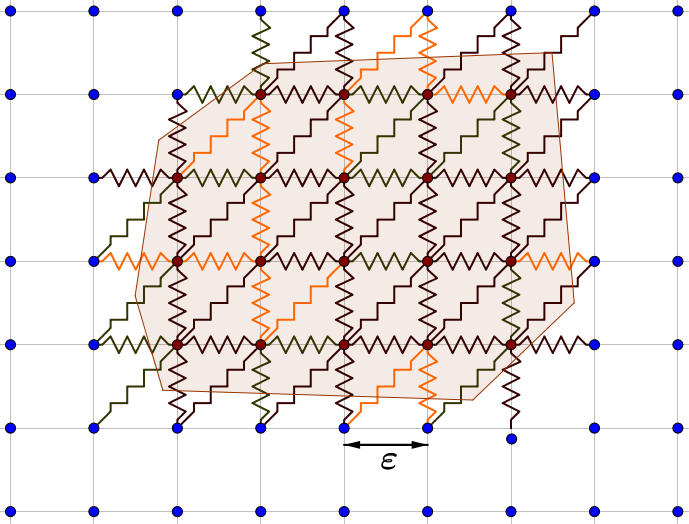}
\end{center}
 \caption{The two-dimensional, triangular network is generated by the edge set
   $\mathsf E_0\colonequals \{\,(1,0),\,(0,1),\,(1,1)\,\}$. A macroscopic domain $Q\subset\R^2$ is shown in orange. The nodes in $\eps\Z^d\cap Q$ are red. The edges in $\eps\mathsf E\cap Q$ are shown as springs.  }
 \label{fig:example_network}
\end{figure}

\begin{example}[Triangular two-dimensional network]\label{EX1}
  A graph in dimension $d=2$ that satisfies \ref{B0:pl:d} is the triangular lattice generated by the set
  \begin{equation*}
    \mathsf E_0\colonequals \{\,(1,0),\,(0,1),\,(1,1)\,\},
  \end{equation*}
  and shown in Figure~\ref{fig:example_network}. We shall consider this specific example
  in our numerical experiments in Section~\ref{S:exp}.
\end{example}

\subsubsection{Elastoplastic spring networks as evolutionary rate-independent systems}
\label{eq:model}
In the following we introduce a quadratic ERIS that describes the evolution of a network
with elastoplastic springs at edges $\mathsf e\in\varepsilon\mathsf E$. For a fixed time
we describe the state of the deformed network by a displacement field $u:\eps\Z^d\to\R^d$
and a longitudinal plastic strain $p:\eps\mathsf E\to\R$. Let $Q$ be an open, bounded
Lipschitz domain in $\R^d$
and assume that $(u,p)$ vanishes outside of $Q$.
We denote by
\begin{equation}\label{eq:EQ}
  \varepsilon\mathsf E\cap Q
  \colonequals
  \Big\{ (x,y)\in\varepsilon\mathsf E\,:\,\{x,y\}\cap Q\neq\emptyset \Big\}
\end{equation}
the set of edges with at least one node in $Q$.
Although $u$ and $p$ effectively only depend on the nodes and edges in $Q$, we view $u$ and $p$ as functions defined on $\eps\Z^d$ and $\eps\mathsf E$, respectively, since this simplifies the presentation.

Recall from Appendix~\ref{sec:eris} that an ERIS consists of a state space $\widetilde Y\e$,
an energy functional $\widetilde{\mathcal E}\e$, and a dissipation functional
$\widetilde{\mathcal R}\e$.
For the definition of the state space of the system, we introduce the norms
\begin{equation*}
  \|u\|_{L^2(\varepsilon\Z^d)^d}\colonequals \Bigg(\varepsilon^{-d}\sum_{x\in\varepsilon\Z^d}|u(x)|^2\Bigg)^\frac12
  \qquad \text{and} \qquad
  \|p\|_{L^2(\varepsilon\mathsf E)}\colonequals \left(\varepsilon^{-d}\sum_{\mathsf e\in \varepsilon\mathsf E}|p(\mathsf e)|^2\right)^\frac12,
\end{equation*}
and the spaces
\begin{align*}
  L^2(\varepsilon\Z^d)^d\colonequals \Big\{u:\varepsilon\Z^d\to\R^d\,:\,\|u\|_{L^2(\varepsilon\Z^d)}<\infty \Big\}
  \text{ and }
  L^2(\varepsilon\mathsf E)\colonequals \Big\{p:\varepsilon\mathsf E\to\R\,:\,\|p\|_{L^2(\varepsilon\mathsf E)}<\infty \Big\}.
\end{align*}
The norms correspond to scalar products, and therefore the spaces are Hilbert spaces.
As the state space of the ERIS we use the product
\begin{equation*}
  \widetilde Y\e\colonequals L^2_0(\varepsilon\Z^d\cap Q)^d\times L^2_0(\varepsilon\mathsf E\cap Q),
\end{equation*}
where
\begin{align*}
  L^2_0(\varepsilon\Z^d\cap Q)^d&\colonequals \Big\{u\in L^2(\varepsilon\Z^d)^d\,:\,u=0\text{ outside of $Q$} \Big\},\\
  L^2_0(\varepsilon\mathsf E\cap Q)&\colonequals \Big\{p\in L^2(\varepsilon\mathsf E)\,:\,p=0\text{ outside of $\varepsilon\mathsf E\cap Q$} \Big\}.
\end{align*}
On $\widetilde Y\e$ we define the product norm
\begin{equation*}
  \|y\|_{\widetilde Y\e}^2\colonequals \|u\|_{L^2(\varepsilon\Z^d)^d}^2+\|p\|_{L^2(\varepsilon\mathsf E)}^2,
\end{equation*}
which turns $\widetilde Y\e$ into a Hilbert space.

We use the notation $y=(u,p)$ for the components of the state variable.
The stored energy of the system is described by $\widetilde{\mathcal E}\e(t,\cdot):\widetilde Y\e\to\R$, 
\begin{equation*}
  \widetilde{\mathcal E}\e(t,y)\colonequals \frac{1}{2}\eps^d\sum_{\mathsf e\in\varepsilon\mathsf E}\Big(a(\mathsf e/\varepsilon)\big(\nabla_s u(\mathsf e)-p(\mathsf e)\big)^2+h(\mathsf e/\varepsilon)p(\mathsf e)^2\Big)-\eps^d\sum_{x\in \varepsilon\Z^d}l(t,x)\cdot u(x),
\end{equation*}
where $a,h:\mathsf E\to(0,\infty)$ describe the elastic modulus and kinematic hardening parameter
of the springs, respectively, and $l:[0,T]\times \varepsilon\Z^d\to\R^d$ denotes an external force acting on the nodes.
Note that by assumption the sums in the definition of $\widetilde{\mathcal E}\e$
effectively only run over the finite sets $\eps\mathsf E\cap Q$ and $\eps\Z^d\cap Q$, respectively.

As dissipation functional we use
\begin{equation*}
  \widetilde{\mathcal R}\e(\dot y)\colonequals \eps^d\sum_{\mathsf e\in\varepsilon\mathsf E}\sigma_{\rm yield}(\mathsf e/\varepsilon)|\dot p(\mathsf e)|,
\end{equation*}
where $\sigma_{\rm yield}:\mathsf E\to(0,\infty)$ describes the yield strength of the springs. (Later we shall consider more general dissipation functionals.)
We assume that the springs are non-degenerate in the sense that there exists $\lambda>0$ such that
\begin{equation}\label{eq:unifparam}
  \lambda\leq a(\mathsf e),h(\mathsf e)\leq\frac1\lambda,\qquad 0\leq\sigma_{\rm yield}(\mathsf e)\leq \frac1\lambda\qquad \text{ for all }\mathsf e\in\mathsf E.
\end{equation}

The evolution of the network of elastoplastic springs is described by a curve $y\colonequals (u,p):[0,T]\to\widetilde Y\e$ that satisfies the force balance equations
\begin{equation}\label{eq:FBE}
  0 \in D_{y}\widetilde{\mathcal E}\e(t,y(t))+ \partial \widetilde{\rcal}\e(\dot{y}(t))\qquad\text{for all }t\in[0,T].
\end{equation}
Here, $D_{y}$ denotes the Gateaux derivative with respect to the state variable,
and $\partial$ the subdifferential from convex analysis.

Equivalently, the system is a quadratic ERIS in the sense of Definition~\ref{D:ERIS}.
Indeed, the state space $\widetilde Y\e$ is Hilbert space and the dissipation functional $\widetilde\rcal\e$
is convex, continuous and positively homogeneous of degree $1$, and thus satisfies the required properties.
Therefore, we only need to show that the quadratic energy is generated by
a positive definite operator. To that end, first note that,
provided $l(t,x)$ is sufficiently regular, we may rewrite $\widetilde{\mathcal E}\e$ as
\begin{equation*}
  \widetilde{\mathcal E}\e(t,y)=  \frac12\expect{\widetilde{\mathbb A}\e
    \begin{pmatrix}
      u\\p
    \end{pmatrix},    \begin{pmatrix}
      u\\p
    \end{pmatrix}
  }-  \expect{\widetilde{\ell}\e(t)
    \begin{pmatrix}
      u\\p
    \end{pmatrix},    \begin{pmatrix}
      u\\p
    \end{pmatrix}
  }
\end{equation*}
for some $\widetilde \ell\e\in W^{1,1}((0,T);(\widetilde Y\e)^*)$ and a linear and symmetric operator $\widetilde{\mathbb A}\e:\widetilde Y\e\to (\widetilde Y\e)^*$ that is uniquely determined by
\begin{equation*}
  \expect{\widetilde{\mathbb A}\e
    \begin{pmatrix}
      u\\p
    \end{pmatrix},    \begin{pmatrix}
      u\\p
    \end{pmatrix}
}\colonequals \varepsilon^d\sum_{\mathsf e\in\varepsilon\mathsf E}\Big(a(\mathsf e/\varepsilon)\big(\nabla_su(\mathsf e)-p(\mathsf e)\big)^2+h(\mathsf e/\varepsilon)p(\mathsf e)^2\Big)\qquad\text{for all }(u,p)\in\widetilde Y\e.
\end{equation*}
By \eqref{eq:unifparam} there exists $c=c(\lambda)c>0$ such that
\begin{equation*}
  \frac1c\expect{\widetilde{\mathbb A}\e
    \begin{pmatrix}
      u\\p
    \end{pmatrix},    \begin{pmatrix}
      u\\p
    \end{pmatrix}
  }\geq \varepsilon^d\sum_{\mathsf e\in\varepsilon\mathsf E}|\nabla_su(\mathsf e)|^2+|p(\mathsf e)|^2.
\end{equation*}
In view of the discrete Korn inequality of \ref{B0:pl:d}, and the fact that for any $u\in L^2_0(\varepsilon\Z^d\cap Q)^d$ the discrete Poincaré inequality
\begin{equation*}
  \|u\|_{L^2(\varepsilon\Z^d)^d}\leq C\|\nabla u\|_{L^2(\varepsilon\mathsf E)}
\end{equation*}
holds with a constant only depending on $Q$, 
we deduce that $\widetilde{\mathbb A}\e$ is positive definite (uniformly in $\eps$), and $(\widetilde Y\e,\widetilde{\mathcal E}\e, \widetilde{\rcal}\e)$ is indeed a quadratic ERIS. Consequently Theorem~\ref{T:energetic_solution} yields for any stable (in the sense of the theorem) initial state $\tilde y^0$
the existence of a unique function  $\tilde y\e\in W^{1,1}((0,T);\widetilde Y\e)$ satisfying $\tilde y\e(0)=\tilde y^0$ and the force balance equations \eqref{eq:FBE}.

\subsection{Random heterogeneous elastoplastic spring network}\label{S:random-network}

The material parameters of the spring network introduced in the previous section
were deterministic, and given by the three functions $a,h,\sigma_{\rm yield}:\mathsf E\to(0,\infty)$.
Having homogenization of random heterogeneous networks in mind, we are interested
in situations where these material parameters are rescaled random fields with a
stationary and ergodic distribution. Roughly speaking this means that the parameter functions
are random fields on $\mathsf E$ that are statistically homogeneous and feature
a decay of correlations.  This means, for example, that the field $a$ and the
shifted field $a(x+\cdot)$ decorrelate for shifts $x\in\Z^d$ that diverge to infinity.

In the following we explain the functional analytic framework that we use to describe the random situation. Furthermore, we introduce the model class of \textit{random heterogeneous rate-independent networks}, which is based on slightly more general, node-centric formulation and contains the spring network of the previous section as a special case.

\subsubsection{Stationary and ergodic random fields}\label{S:probsetup}
Let  $\brac{\Omega,\mathcal{F},P}$ denote a complete and separable probability space that is equipped with a discrete dynamical system $\tau \colonequals \cb{\tau_x:\Omega\to \Omega}_{x\in \Z^d}$ such that:
\begin{enumerate}[label=(S\arabic*)]
\item \label{item:assumption_measurability} (Measurability) The maps $\tau_x:\Omega\to\Omega$ are invertible and measurable for all $x \in \mathbb{Z}^d$.
\item (Group property) $\tau_0=\Id$ and $\tau_{x+y}=\tau_x \circ \tau_y$ for all $x,y\in \mathbb{Z}^d$.
\item (Measure preservation) $P(\tau_x E)=P(E)$ for all $E\in \mathcal{F}$ and $x\in \mathbb{Z}^d$.\label{S:stat}
\item \label{item:assumption_ergodicity} (Ergodicity) Every $E\in \mathcal{F}$ that is shift invariant (i.e., $\tau_x E=E$ for all $x\in\mathbb Z^d$) satisfies $P(E)\in\{0,1\}$.
\end{enumerate}
We denote by $\mathbb E[\cdot]$ the expectation with respect to $P$.
\medskip

Following \cite{Papanicolaou1979} and \cite{neukamm2017stochastic} we model general stationary random fields as follows: To any random variable $a_0:\Omega\to\R$, we may associate the random field $a:\Omega\times\Z^d\to\R$, $a(\omega,x)\colonequals a_0(\tau_x\omega)$, which in view of \ref{S:stat} is stationary. The latter means that finite dimensional distributions of $a$ are invariant under spatial shifts, i.e., for any $M\in\N$, $x_1,\ldots,x_m\in\Z^d$ and shift $z\in\Z^d$, the random variable $\{a(\omega,x_1+z),\ldots, a(\omega,x_M+z)\}$ has a joint probability distribution that is independent of $z$. We remark that stationary random fields are used to describe statistically homogeneous material properties of random heterogeneous materials, see \cite{Torquato2013}.

A consequence of the ergodicity assumption~\ref{item:assumption_ergodicity}
is the possibility to approximate the expectation of a stationary random field with help of a spatial averages:
\begin{theorem}[Ergodic theorem]\label{T:ergodic}
  Assume \ref{item:assumption_measurability}--\ref{item:assumption_ergodicity} and $\varphi\in L^p(\Omega)$ with $1\leq p<\infty$. Then
  \begin{equation*}
    \mathbb E[\varphi]=\lim\limits_{L\to\infty}L^{-d}\sum_{z\in \Lambda_L}\varphi(\tau_z\cdot)\text{ in }L^p(\Omega),
  \end{equation*}
  where $\Lambda_L:=\Z^d\cap [0,L)^d$.
  Moreover, this convergence also holds pointwise $P$-a.e.~in $\Omega$.
\end{theorem}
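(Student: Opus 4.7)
The plan is to first prove the pointwise $P$-a.e.\ convergence of the ergodic averages $A_L\varphi := L^{-d}\sum_{z\in\Lambda_L}\varphi\circ\tau_z$ to $\mathbb{E}[\varphi]$, and then deduce the $L^p(\Omega)$ convergence via a maximal inequality together with a density argument.

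For the pointwise part, my first approach would be to reduce the $d$-dimensional statement to iterated applications of the classical one-dimensional Birkhoff ergodic theorem. The group property \ref{item:assumption_measurability}--\ref{S:stat} implies that each coordinate map $\tau_{e_j}:\Omega\to\Omega$ is a single measure-preserving transformation, so Birkhoff applied to $\tau_{e_d}$ gives, for bounded $\varphi$,
\[
\tfrac{1}{L}\sum_{z_d=0}^{L-1}\varphi(\tau_{z_d e_d}\cdot)\longrightarrow \mathbb{E}[\varphi\,|\,\mathcal{I}_d]\qquad P\text{-a.s.},
\]
where $\mathcal{I}_d$ denotes the $\sigma$-algebra of $\tau_{e_d}$-invariant sets. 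Since the lattice $\Lambda_L=\Z^d\cap[0,L)^d$ has product structure, I would then iterate this argument in each coordinate direction. After $d$ such steps the limit is the conditional expectation with respect to $\mathcal{I}_1\cap\cdots\cap\mathcal{I}_d$, which coincides with the full $\tau$-invariant $\sigma$-algebra. By the ergodicity assumption \ref{item:assumption_ergodicity}, this conditional expectation reduces to the constant $\mathbb{E}[\varphi]$, giving pointwise convergence for $\varphi\in L^\infty(\Omega)$.

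For the $L^p$ convergence, the key tool is the Wiener maximal inequality for $\Z^d$-actions: there exists a constant $C=C(d,p)$ such that $\|\sup_{L}|A_L\varphi|\|_{L^p(\Omega)}\leq C\|\varphi\|_{L^p(\Omega)}$ for every $p>1$. Together with the pointwise convergence established above for $\varphi\in L^\infty\subset L^p$, dominated convergence then yields $A_L\varphi\to\mathbb{E}[\varphi]$ in $L^p(\Omega)$ for bounded $\varphi$. A standard $3\eps$-argument using the $L^p$-contraction property $\|A_L\varphi\|_{L^p}\leq\|\varphi\|_{L^p}$ (which follows directly from Jensen's inequality and measure preservation) and the density of $L^\infty(\Omega)$ in $L^p(\Omega)$ extends both the pointwise a.e.\ and the $L^p$ convergence to general $\varphi\in L^p(\Omega)$. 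The borderline case $p=1$ requires the slightly more careful weak-type $(1,1)$ version of the maximal inequality, but the density step proceeds identically.

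The main obstacle is the pointwise statement. The iteration of one-dimensional Birkhoff is conceptually clean, but one must verify that the intermediate conditional expectations $\mathbb{E}[\varphi\,|\,\mathcal{I}_d]$ remain measurable in the way required for the next application of Birkhoff along $\tau_{e_{d-1}}$, and that the cube $\Lambda_L$ rather than an arbitrary Følner sequence is admissible at each step; the product structure of $\Lambda_L$ is precisely what makes the iteration go through. An alternative route would be to invoke the multiparameter ergodic theorem of Wiener directly, whose proof rests on a Vitali-type covering lemma for axis-aligned cubes in $\Z^d$ — this avoids the iteration but shifts the work into the covering argument. Either way, once pointwise convergence is in hand, the $L^p$ statement follows by essentially soft arguments.
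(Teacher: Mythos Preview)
Your outline is a reasonable sketch of how the multiparameter ergodic theorem is classically established, but note that the paper does not actually prove Theorem~\ref{T:ergodic}: it is stated without proof as a standard result from ergodic theory (in the spirit of the references to \cite{Papanicolaou1979}), so there is no ``paper's own proof'' to compare against. Your two proposed routes---iterated one-dimensional Birkhoff along the coordinate directions, or a direct appeal to Wiener's multiparameter theorem via a covering argument---are both textbook approaches; the iteration is the one most commonly presented (e.g.\ Krengel, \emph{Ergodic Theorems}), and your identification of the delicate point (measurability and integrability of the intermediate conditional expectations so that Birkhoff can be reapplied) is accurate. One small correction: you do not need the full Wiener maximal inequality to get $L^p$ convergence once pointwise a.e.\ convergence is known for a dense class; the contraction $\|A_L\varphi\|_{L^p}\le\|\varphi\|_{L^p}$ alone suffices for the $3\varepsilon$ density argument in $L^p$, and the maximal inequality is only needed to pass the \emph{pointwise} statement from $L^\infty$ to $L^p$ (and to $L^1$ via weak-type $(1,1)$).
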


The following example will be explored numerically in Section~\ref{S:exp}.

\begin{example}[Network with i.i.d.\ coefficients]\label{EX2}
  A simple example of a situation with stationary and ergodic material properties is the case of independent and identically distributed (i.i.d.) parameters: We assume that $a(\mathsf e)$, $h(\mathsf e)$, and $\sigma_{\rm yield}(\mathsf e)$ with $\mathsf e\in\mathsf E$ are independent random variables, and that for $i=1,\dots,|\mathsf E_0|$ the random variables $\{a((x,x+e_i))\}_{x\in\Z^2}$, $\{h((x,x+e_i))\}_{x\in\Z^2}$, and $\{\sigma_{\rm yield}((x,x+e_i))\}_{x\in\Z^2}$)
  are identically distributed with laws $P_{a,i}$, $P_{h,i}$, and $P_{\sigma_{\rm yield},i}$,
  respectively. Hence, the random heterogeneous material is fully determined by the
  $3 |\mathsf E_0|$ probability measures $P_{a,1},\ldots, P_{\sigma_{\rm yield},|\mathsf E_0|}$. For instance we may consider for these probability measures uniform distributions on compact intervals contained in $(0,\infty)$.
\end{example}

\subsubsection{Node-centric formulation}
\label{sec:node_centric_formulation}

In our network model, material properties are given as functions defined on edges.
However, for the notion of stationarity and related tools from stochastic homogenization it is more natural to consider functions defined on nodes.
Therefore, in the following we identify functions defined on edges with (vector valued) functions defined on nodes. To that end, we fix an enumeration of the vectors $\mathsf E_0$, i.e.,
\begin{equation*}
  \mathsf E_0=\{e_1,\ldots,e_d,e_{d+1},\ldots,e_k\}
\end{equation*}
with the convention that $e_1,\ldots,e_d$ denotes the standard basis of $\R^d$.
We then associate with an edge function $f:\varepsilon\mathsf E\to\R$ the $\R^k$-valued node function $\bar f:\varepsilon\Z^d\to\R^k$,
\begin{equation*}
  \bar f(x)\colonequals \Big(p((x,x+\varepsilon e_1)),\ldots,p((x,x+\varepsilon e_k))\Big)\in\R^k,\qquad x\in\varepsilon\Z^d.
\end{equation*}
Throughout the paper, we shall simply write $f$ instead of $\bar f$ and thus identify any edge function with the associated $\R^k$-valued node function. In particular, the symmetrized gradient
of a displacement $u:\varepsilon\Z^d\to\R^d$ takes the form
\begin{equation*}
  \nabla_su:\varepsilon\Z^d\to\R^k, \qquad \nabla_su(x)=\Big(\nabla_s u((x,x+\varepsilon e_1)),\ldots,\nabla_s u((x,x+\varepsilon e_k))\Big),
\end{equation*}
where $\nabla_s$ on the right-hand side denotes the projected edge derivative.

\begin{example}\label{E:iid}
  This situation of Example~\ref{EX2} can be phrased in form of the abstract framework introduced above. To that end consider the ``single node'' probability space
  \begin{equation*}
    \Omega_0:=\R^{2|\mathsf E_0|},\ \mathcal F_0:=\mathcal B\big(\R^{2|\mathsf E_0|}\big),\,P_0:=P_{a,1}\otimes\cdots\otimes P_{\sigma_{\rm yield},|\mathsf E_0|},
  \end{equation*}
  which describes the joint distribution of the material parameters of the $|\mathsf E_0|$ edges whose starting node is the origin. We then consider the product probability space
  \begin{equation*}
    \Omega:=\otimes_{\Z^d}\Omega_0=\{\omega:\Z^d\to\R^{2|\mathsf E_0|}\},\qquad \mathcal F:=\otimes_{\Z^d}\mathcal F_0,\qquad P:=\otimes_{\Z^d}P_0,
  \end{equation*}
  and the shift $\tau:\Omega\times\Z^d\to\Omega$, $\tau_z\omega:=\omega(\cdot+z)$. Then it is well-known that properties~(S1)--(S4) are satisfied. In order to model the material parameters $a,h$ and $\sigma_{\rm yield}$ as random fields on $\Omega$, we may simply set
  \begin{equation*}
    a(\omega,(x,x+e_i))=\omega_{i}(x),\quad h(\omega,(x,x+e_i))=\omega_{|\mathsf E_0|+i}(x),\quad \sigma_{\rm yield}(\omega,(x,x+e_i))=\omega_{2|\mathsf E_0|+i}(x).
  \end{equation*}
\end{example}

  In line with the convention to identify edge functions with node functions, we may rewrite the energy functional of Section~\ref{eq:model} as
\begin{equation*}
  \widetilde{\mathcal E}\e(t,y)= \eps^d\sum_{x\in\eps\Z^d} \bigg[\frac12\widetilde A(x/\varepsilon)\binom{\nabla_su(x)}{p(x)}\cdot \binom{\nabla_s u(x)}{p(x)}-l(t,x)\cdot u(x) \bigg]
\end{equation*}
for a positive definite and symmetric coefficient matrix $\widetilde A:\Z^d\to\R^{2k\times 2k}_{\rm sym}$ that is uniquely determined by the identity
\begin{equation*}
  \widetilde A(x)\binom{d}{p}\cdot\binom{d}{p}=\sum_{i=1}^ka((x,x+e_i))(d_i-p_i)^2+h((x,x+e_i))p_i^2\qquad\text{for all }d,p\in\R^k.
\end{equation*}
Likewise the dissipation functional can be rewritten as
\begin{equation*}
  \widetilde{\rcal}\e(\dot y)=\varepsilon^d\sum_{x\in\eps\Z^d}\tilde\rho(x/\varepsilon,\dot p)
\end{equation*}
where $\tilde\rho:\Z^d\times\R^k\to[0,\infty)$ is defined as
\begin{equation*}
  \tilde\rho(x,\dot p)=\sum_{x\in\eps\Z^d}\sum_{i=1}^k\sigma_{\rm yield}((x,x+e_i))|\dot p_i|.
\end{equation*}
We note that $\tilde\rho(x,\cdot)$ is convex and positively homogeneous of degree 1.

% \begin{example}
% In particular, in the situation of the triangular, two-dimensional network of Example~\ref{EX1}, we obtain
% \begin{equation*}
%   \widetilde A(x)=
%   \begin{matrix}
%     ...
%   \end{matrix},\qquad \tilde\rho(x,F)=...
% \end{equation*}
% \todosander{Hier fehlt noch etwas. \todoneukamm{Ich hatte gehofft, dass das Patrick erg\"anzt...}}
% \end{example}

\subsubsection{Random heterogeneous rate-independent network}
We now introduce a discrete rate-independent system that is
the starting point of our asymptotic analysis. It is formulated as an ERIS with a form similar to the one derived in Section~\ref{sec:node_centric_formulation}, and it contains as a special case the model of Section~\ref{eq:model} for the network composed of elastoplastic springs with random heterogeneous material parameters.

In the following we often use the tensor product space notation for products of Hilbert spaces. We explain the notation in the appendix~\ref{app:tensorproduct}.

The state space is
 \begin{equation*}
  Y\e=\brac{L^2(\Omega)\otimes L^2_0(Q\cap \varepsilon\mathbb{Z}^{d})}^d \times \brac{L^2(\Omega)\otimes L^2(\eps\Z^d)}^k,
\end{equation*}
where the first factor again captures the node displacements (which are now random),
and the second factor captures the plastic strains.
With $y = (u,p)$ we consider the energy functional $\mathcal{E}\e:[0,T]\times Y_{\varepsilon} \to \mathbb{R}$,
\begin{align*}
  \mathcal{E}\e(t,y)
  &\colonequals
  \mathbb E\bigg[\varepsilon^d\sum_{x\in\eps\Z^d}\frac12 A(\tau_{\frac{x}{\varepsilon}}\omega)\binom{\nabla_su(\omega,x)}{p(\omega,x)}\cdot \binom{\nabla_s u(\omega,x)}{p(\omega,x)}-l\e(t,x)\cdot u(\omega,x)\bigg]
\end{align*}
where $l_{\varepsilon}:[0,T] \times \varepsilon \Z^d \to \R^d$ describes the applied load. We assume that
\begin{equation}\label{ass:smooth_load}
  \forall x\in\varepsilon\Z^d\,:\,l\e(\cdot,x)\in W^{1,1}([0,T]).
\end{equation}
The random material properties (e.g., the elastic modulus and the kinematic hardening parameter in the network model of Section~\ref{eq:model})
are described by the random matrix $A:\Omega\to\R^{2k\times 2k}_{\mathrm{sym}}$ and we assume that
\begin{enumerate}[label=(A\arabic*)]
  \setcounter{enumi}{1}
\item \label{B1:pl:d} $A\in L^{\infty}(\Omega,\mathbb{R}^{2k\times 2k}_{\mathrm{sym}})$ and there exists $c>0$ such that $A(\omega)F\cdot F\geq c|F|^2$ for $P$-a.a. $\omega\in \Omega$ and all $F\in \mathbb{R}^{2k}$.
\end{enumerate}
Plastic dissipation is modeled by the dissipation functional $\rcal\e:Y_{\varepsilon}\to [0,\infty]$,
\begin{equation*}
  \rcal\e(\dot{y})\colonequals \mathbb E\bigg[\varepsilon^d\sum_{x\in\eps\Z^d}\rho(\tau_{\frac{x}{\varepsilon}}\omega,\dot{p}(\omega,x))\bigg].
\end{equation*}
We assume that
\begin{enumerate}[label=(A\arabic*)]
  \setcounter{enumi}{2}
 \item \label{B2:pl:d} $\rho:\Omega\times \mathbb{R}^k\to [0,\infty)$ is measurable %, $\rho(\cdot,F)$ is measurable for all $F\in \R^k$
   and $\rho(\omega,\cdot)$ is convex and positively homogeneous of degree~$1$ for $P$-a.a.\ $\omega$. Moreover, we assume that there exists a function $\psi \in L^2(\Omega)$ such that
   for $P$-a.a.~$\omega$ we have the bound $\rho(\omega,F)\leq \psi(\omega)(|F|+1)$ for all $F\in \R^k$.
%\footnote{\dr{Notice for later: $\rho$ is a Caratheodory integrand, $\expect{\int_{Q}\rho(\omega,z(\omega,x))}$ does not have to be finite, however for a.e. $\omega$, $\int_{Q}\rho(\omega,z(x))dx$ is finite since $\rho(\omega,z(x))=|z(x)|\rho(\omega,\frac{z}{|z|})\leq |z(x)|\sup_{\overline{B_1}}\rho(\omega,\cdot)=c(\omega)|z(x)|$ and $c(\omega)<\infty$ since $\rho(\omega,\cdot)$ continuous and $\overline{B}$ is compact.}}
 \end{enumerate}

 We remark that the lattice model of Section~\ref{sec:node_centric_formulation} is a special case of the above setting, which allows for more general coefficient matrices $A$ and dissipation densities $\rho$ than those considered in Section~\ref{sec:node_centric_formulation}.
In particular, since we allow for $\rho$ to depend on the strain rates in all edges that start at node, it is not always possible to write the energy functional and the dissipation functional in an edge-centric formulation.

 \begin{remark}
   Under assumptions \ref{item:assumption_measurability}--\ref{item:assumption_ergodicity},
   \ref{B0:pl:d}--\ref{B2:pl:d}, and \eqref{ass:smooth_load}, the system $\brac{Y_{\varepsilon}, \mathcal{E}_{\varepsilon}, \mathcal{R}_{\varepsilon}}$ is a quadratic ERIS in the sense of Definition~\ref{D:ERIS} and we may appeal to Theorem~\ref{T:energetic_solution} for the existence and uniqueness of energetic solutions to the initial value problem. Since we assume
   that $u\e$ vanishes outside of $Q$, it is natural to assume that the initial plastic strain $p^0$ vanishes for all edges that are not contained in $\eps\mathsf E\cap Q$ (see \eqref{eq:EQ}), i.e.,
   \begin{equation}\label{eq:technical_condition_p0}
     \forall x\in\eps\Z^d, i\in\{1,\ldots,k\}:\qquad(x,x+\eps e_i)\not\in\eps\mathsf E\cap Q\,\Rightarrow\,p^0_i(x)=0.
   \end{equation}
    One can easily check that this property is inherited by the energetic solution of the ERIS: Indeed, for a.a.~$t\in[0,T]$ the plastic strain $p\e(t,\cdot)$ vanishes for all edges that are not contained in $\eps\mathsf E\cap Q$.
  \end{remark}

  \begin{remark}[Mean versus quenched]
    The ERIS $(Y\e, {\mathcal E}\e,\rcal\e)$ is a mean model, i.e., the definition of the energy and dissipation functional invokes the expectation $\mathbb E[\cdot]$, and thus an average over all realizations $\omega\in\Omega$ of the random network. A corresponding \textit{quenched} ERIS is given by considering for each realization $\omega\in\Omega$ sampled with $P$, the functionals
    \begin{align*}
      \mathcal{E}\e^\omega(t,y)
  &\colonequals
    \varepsilon^d\sum_{x\in\eps\Z^d}\frac12 A(\tau_{\frac{x}{\varepsilon}}\omega)\binom{\nabla_su(x)}{p(\omega,x)}\cdot \binom{\nabla_s u(x)}{p(x)}-l\e(t,x)\cdot u(x),\\
      \rcal^\omega\e(\dot y)
      &\colonequals \varepsilon^d\sum_{x\in\eps\Z^d}\rho(\tau_{\frac{x}{\varepsilon}}\omega,\dot{p}(x)),
    \end{align*}
    together with the state space $\widetilde Y\e$. The mean model may be viewed as an integrated version of the quenched model. In particular, if $y^\omega$ and $y$ are solutions to the quenched and mean model, then $y^\omega(t,x)=y(\omega,t,x)$ for $P$-a.e.~$\omega\in\Omega$, cf.~Remark~\ref{R:reltoquenched}.
  \end{remark}

%%% Local Variables:
%%% mode: latex
%%% TeX-master: "rve_plasticity"
%%% End:

% LocalWords:  elastoplastic su Fx sym Korn affine eq Lipschitz
% LocalWords:  functionals

\section{The homogenized  rate-independent system}
As shown in \cite{neukamm2017stochastic}, in the limit $\varepsilon\to 0$ the ERIS $(Y\e,\mathcal E\e,\rcal\e)$
describing an elastoplastic spring network on a lattice with spacing $\varepsilon$
converges (in the sense of evolutionary $\Gamma$-convergence) to a homogenized ERIS $(Y_{\hom},\mathcal E_{\hom},\rcal_{\hom})$ that describes the rate-independent evolution of an elastoplastic material
on the continuum domain $Q$. In this section we present this homogenized ERIS and recall the convergence result from \cite{neukamm2017stochastic}.
Moreover, as a new result we show that the homogenized ERIS can be reformulated as a classical force balance equation with an effective Cauchy stress tensor $\sigma_{\hom}$.
This tensor is related to the linearized continuum strain by a generalized
Prandtl--Ishlinskii operator which captures the microstructure evolution.

\subsection{Definition of  the homogenized ERIS \texorpdfstring{$(Y_{\hom},\mathcal E_{\hom},\rcal_{\hom})$}{(Yhom,Ehom,Rhom)}}
\label{sec:definition_homogenized_system}
Evolutionary $\Gamma$-convergence of the spring network model $(Y\e,\mathcal E\e,\rcal\e)$
leads to a homogenized ERIS that, besides the \emph{macroscopic} displacement field and plastic strain,
features additional \emph{microscopic} degrees of freedom. The latter describe the evolving,
microscopic distortion of the system and influence the homogenized behavior.
They are described by strain vectors that are defined on the probability space $(\Omega,\mathcal F,P)$ and take values in $\R^k$ where $k \colonequals |\mathsf E_0|$, i.e., each component of the strain vector corresponds to one of the $k$ springs in $\mathsf E_0$.
To be precise, we denote by
$L^2_{s}(\Omega)\subset L^2(\Omega)^k$ the closed subspace of random strain vectors defined as the closure in $L^2(\Omega)^k$ of the space random, projected partial derivatives,
\begin{equation}\label{eq:L2pot}
  \bigg\{\bigg(\frac{e_1}{|e_1|}\cdot (u(\tau_{e_1}\omega){-}u(\omega)),\ldots,\frac{e_k}{|e_k|}\cdot (u(\tau_{e_k}\omega){-}u(\omega))\bigg)^\top\,:\,u\in L^2(\Omega)^d\bigg\}.
\end{equation}
The homogenized ERIS is then defined on the state space
\begin{equation}
\label{D:Yhom}  Y_{\hom} \colonequals H^1_0(Q)^d\times \big(L^2(\Omega)\otimes L^2(Q)^k\big)\times \big(L^2_{s}(\Omega)\otimes L^2(Q)\big).
\end{equation}
An element $y\in Y_{\hom}$ consists of three components $y=(u,p,\chi)$, where $u$
is the macroscopic displacement, $p$ is the plastic strain vector (with macroscopic contribution
$\mathbb E[p]$ and microscopic fluctuation $p-\mathbb E[p]$) and $\chi_s$ has the meaning of a
microscopic strain vector. The two strain vectors $p$ and $\chi_s$ are random fields,
while $u$ is a deterministic quantity.

To relate the strain vectors from $\R^k$ to the symmetric $d \times d$ strain matrices
typically used in $d$-dimensional small-strain elastoplasticity, we introduce a linear operator:
\begin{definition}[Tensor-to-vector conversion]
  Let $\{e_1,\ldots,e_k\}$ denote an enumeration of the edge-generating set $\mathsf E_0$ with the convention that $e_1,\ldots,e_d$ is the standard basis of $\R^d$. We define the \textit{tensor-to-vector conversion operator} as
  \begin{equation*}
    P_s:\R^{d\times d}\to\R^k,\qquad P_s(F)
    \colonequals
    \Big(\frac{e_1}{|e_1|}\cdot F\frac{e_1}{|e_1|},\ldots,\frac{e_k}{|e_k|}\cdot F\frac{e_k}{|e_k|}\Big)^\top.
  \end{equation*}  
\end{definition}
In particular, we apply this map to gradients of displacement fields $u\in H^1_0(Q)$
to obtain \emph{macroscopic elastic strain vectors}
\begin{equation*}
 P_s(\nabla u(x))\in\R^k.
\end{equation*}

\begin{remark}[Vector-to-tensor conversion]\label{R:strainvector}
  By definition, $P_s(F)$  only depends on the symmetric part of $F\in\R^{d\times d}$. In view of Assumption~\ref{B0:pl:d} there exists a constant $c>0$ such that for any $F\in\R^{d\times d}$
  \begin{equation*}
    \tfrac1c|\operatorname{sym} F|\leq |P_s(F)|\leq c|F|,
  \end{equation*}
  and thus $P_s\vert_{\R^{d\times d}_{\rm sym}}$ is injective. We denote by $P_s^*:\operatorname{range}(P_s)\to\R^{d\times d}_{\rm sym}$ the adjoint of $P_s$ defined by the identity $v\cdot P_s(F)=P_s^*(v)\cdot F$ for all $v\in \operatorname{range}(P_s)$ and $F\in\R^{d\times d}_{\rm sym}$.
\end{remark}

\begin{example}
\label{ex:stress_vector}
  In the case of the triangular, two-dimensional lattice $\Z^2$ of Example~\ref{EX1}
  with $\mathsf E_0 =\{e_1,e_2,e_3\colonequals e_1+e_2\}$ we have
  \begin{equation*}
    P_s(\sigma)=\Big(\sigma_{11},\sigma_{22},\frac12\sum_{i,j=1}^2\sigma_{ij}\Big)^\top,
  \end{equation*}
  for all $\sigma\in\R^{2\times 2}_{\rm sym}$,  and for any $s\in\operatorname{range}(P_s)$ we have
  \begin{equation*}
    P_s^*(s)=
      \sigma=
      \begin{pmatrix}
        s_1+\frac12 s_3 &\frac12 s_3\\
        \frac12 s_3&s_2+\frac12 s_3
      \end{pmatrix}.
  \end{equation*}
\end{example}

The energy functional $\mathcal E_{\hom}:[0,T]\times Y_{\hom}\to\R$ of the homogenized ERIS is
\begin{align*}
  \mathcal E_{\hom}(t,y) \colonequals \frac12\int_Q \mathbb E\bigg[A(\omega)
  \begin{pmatrix}
    \nabla_su(x)+\chi_s(\omega,x)\\
    p(\omega,x)
  \end{pmatrix}\cdot   \begin{pmatrix}
    \nabla_su(x)+\chi_s(\omega,x)\\
    p(\omega,x)
  \end{pmatrix}\bigg]\,dx-\int_Ql(t,x)\cdot u(x)\,dx,
\end{align*}
and the dissipation functional $\rcal_{\hom}:Y_{\hom}\to[0,\infty]$ is
\begin{equation*}
  \rcal_{\hom}(\dot y)\colonequals\int_Q\mathbb E\big[\rho(\omega,\dot p(\omega,x))\big]\,dx,
\end{equation*}
where $y=(u,p,\chi_s)$.

\subsection{Interpretation as a two-scale system}\label{S:two-scale_system}
  There is a natural interpretation of the homogenized system: To each macroscopic material point
  $x\in Q$ we may associate a copy of the network $(\Z^d,\mathsf E)$, which we would recover
  by zooming into the material at $x$. In view of the definition of the state space $Y_\textup{hom}$,
  to each $x\in Q$ we may assign besides the macroscopic, elastic strain vector $\nabla_su(x)\in\R^k$ the two random vectors
  \begin{equation*}
    p(\cdot,x)\in L^2(\Omega)^k
    \qquad \text{and} \qquad
    \chi_s(\cdot,x)\in L^2_s(\Omega).
  \end{equation*}
  As we shall see next, these two random vectors have the meaning of a microscopic plastic
  elastic strains of the lattice $(\Z^d,\mathsf E)$ attached at $x$, respectively.
  Indeed, for a typical sample $\omega\in\Omega$ (and thus for a configuration of the material
  of the network) and fixed $x\in Q$ we may consider the function
  \begin{equation*}
    p^{\omega,x}:\Z^d\to\R^k,\qquad p^{\omega,x}(z) \colonequals p(\tau_{z}\omega,x)\in\R^k,
  \end{equation*}
  which describes the plastic strain of the network $(\Z^d,\mathsf E)$ attached to $x$.
  Likewise, one can show that there exists a sublinearly growing displacement field $\varphi^{\omega,x}:\Z^d\to\R^d$ with $\varphi(0)=0$ such that
  \begin{equation*}
    \nabla_s\varphi^{\omega,x}(z)=\chi_s(\tau_{z}\omega,x).
  \end{equation*}
  The field $\varphi^{\omega,x}$ describes fluctuations in the displacement of the lattice $(\Z^d,\mathsf E)$, which is attached to the macroscopic material point $x$ and macroscopically deformed by the linear displacement $z\mapsto u(x)+\nabla u(x)(z-x)$.
  Using the ergodic theorem, we have
  \begin{multline}\label{eq:ergodicity_exploit}
    \mathbb E\bigg[A
  \begin{pmatrix}
    \nabla_su(x)+\chi_s(x)\\
    p(x)
  \end{pmatrix}\cdot   \begin{pmatrix}
    \nabla_su(x)+\chi_s(x)\\
    p(x)
  \end{pmatrix}\bigg]\\
    = \lim\limits_{L\to\infty}L^{-d}\sum_{z\in \Z^d\cap[0,L)^d}A(\tau_z\omega)
  \begin{pmatrix}
    \nabla_su(x)+P_s(\nabla\varphi^{\omega,x}(z))\\
    p^{\omega,x}(z)
  \end{pmatrix}\cdot   \begin{pmatrix}
    \nabla_su(x)+P_s(\nabla\varphi^{\omega,x}(z))\\
    p^{\omega,x}(z)
  \end{pmatrix},
  \end{multline}
  and thus the stored elastic energy at $x\in Q$ is given by a spatial average of the stored elastic energy of the deformed lattice $(\Z^d,\mathsf E)$ attached to $x$.

\subsection{Convergence of the spring network to the homogenized system}
In this section we recall the convergence result for $(Y\e,\mathcal E\e,\rcal\e)\to(Y_{\hom},\mathcal E_{\hom},\rcal_{\hom})$ proved in \cite{neukamm2017stochastic}. It uses the notion of stochastic two-scale
convergence in the mean introduced in \cite{bourgeat1994stochastic}.
Since the limit additionally invokes a transition from a discrete to a continuous model
we shall use the following variant that can handle such transitions. It also first
appeared in \cite{neukamm2017stochastic}, together with an equivalent characterization via stochastic unfolding.
\begin{definition}[Two-scale convergence in the mean]\label{D:two-scale}
  Let $Q\subset\R^d$ be an open, bounded Lipschitz domain, and let~$(v\e)$ denote a sequence of
  random fields $v\e:\Omega\times\eps\Z^d\to\R$ depending on a parameter $\eps > 0$.
  We say that $v\e$ weakly two-scale converges to $v\in L^2(\Omega)\otimes L^2(Q)$ in $L^2$
  as $\eps\to 0$, if $(v\e)$ is bounded, i.e.,
  \begin{equation*}
    \limsup\limits_{\eps\to 0}\|v\e\|_{L^2(\Omega)\otimes L^2(\eps\Z^d)}<\infty,
  \end{equation*}
  and
  \begin{equation*}
    \lim_{\eps\to 0}\mathbb E\bigg[\eps^{d}\sum_{x\in\eps\Z^d}v\e(\omega,x)\varphi(\tau_{\frac{x}{\eps}}\omega)\eta(x)\bigg]
    =
    \mathbb E\Big[\int_{Q}v(\omega,x)\varphi(\omega)\eta(x)\,dx\Big]
  \end{equation*}
  for all $\varphi\in L^2(\Omega)$ and $\eta\in C^\infty_c(Q)$. If in addition
  \begin{equation*}
    \lim\limits_{\eps\to 0}\|v\e\|_{L^2(\Omega)\otimes L^2(\eps\Z^d)}=\|v\|_{L^2(\Omega)\otimes L^2(Q)},
  \end{equation*}
  then we say that $(v\e)$ strongly two-scale converges to $v$ in $L^2$. We use the notations $v\e\wtto v$ and $v\e\stto v$ for weak and strong two-scale convergence, respectively.
Moreover, for a sequence $(y\e)=(u\e,p\e)$ in $Y\e$ and $y=(u,p,\chi_{s})\in Y_{\hom}$ we write $y\e\ctto y$ if
\begin{equation*}
  u\e\stto u,\qquad \nabla_su\e\stto \nabla_su+\chi_{s},\qquad p\e\stto p.
\end{equation*}
\end{definition}

We can now state that the random heterogeneous rate-independent network converges to the homogenized system
$(Y_\textup{hom}, \mathcal{E}_\textup{hom}, \rcal_\textup{hom})$ defined in
Section~\ref{sec:definition_homogenized_system} in this sense.

\begin{theorem}[Evolutionary $\Gamma$-convergence, \mbox{\cite[Theorem~4.10]{neukamm2017stochastic}}]\label{T:hom}
  Let $Q\subset\R^d$ be an open, bounded Lipschitz domain, and assume \ref{B0:pl:d}--\ref{B2:pl:d},
  \ref{item:assumption_measurability}--\ref{item:assumption_ergodicity}, and condition \eqref{ass:smooth_load}
  on the smoothness of the external load.  Consider initial conditions $y\e^0\in Y\e$ that are stable
  in the sense of Theorem~\ref{T:energetic_solution} for the ERIS $(Y\e,\mathcal E\e,\rcal\e)$, and that satisfy \eqref{eq:technical_condition_p0}. Assume that
  \begin{equation*}
    y\e^0\ctto y^0_{\hom}\qquad\text{and}\qquad l\e\stto l_{\hom}
  \end{equation*}
  for some $y^0_{\hom}\in Y_{\hom}$ and $l_{\hom}\in W^{1,1}([0,T],L^2(Q)^d)$. Then $y^0_{\hom}$
  is a stable initial state of the ERIS $(Y_{\hom},\mathcal E_{\hom},\rcal_{\hom})$
  and it holds
  \begin{equation*}
    y\e(t)\ctto y_{\hom}(t)\qquad\text{for all }t\in[0,T],
  \end{equation*}
  where $y\e\in W^{1,1}((0,T);Y\e)$ is the unique energetic solution to the ERIS $(Y\e,\mathcal E\e,\rcal\e)$ with $y\e(0)=y\e^0$, and $y_{\hom}\in W^{1,1}((0,T);Y_{\hom})$ is the unique energetic solution to the ERIS $(Y_{\hom},\mathcal E_{\hom},\rcal_{\hom})$ with $y_{\hom}(0)=y_{\hom}^0$.
\end{theorem}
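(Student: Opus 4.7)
The plan is to apply the abstract theory of evolutionary $\Gamma$-convergence for quadratic ERIS (cf.\ \cite{mielke2016evolutionary,mielke2012generalized}) along the topology $\ctto$ of Definition~\ref{D:two-scale}. For such systems, convergence of energetic solutions is known once one verifies: (i) compactness for energy-bounded sequences, (ii) Mosco-convergence $\mathcal E\e(t,\cdot)\to\mathcal E_{\hom}(t,\cdot)$ along $\ctto$ uniformly in $t$, (iii) a $\Gamma$-liminf inequality together with a \emph{joint} (mutual) recovery sequence covering both energy and dissipation, and (iv) convergence of loads and stable initial data, which is already part of the hypothesis. The indispensable analytic tool is stochastic two-scale convergence in the mean (equivalently, stochastic unfolding), as developed in \cite{neukamm2017stochastic,heida20191}.

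The first step is a two-scale compactness and structure theorem for energy-bounded sequences $y\e=(u\e,p\e)\in Y\e$. By \ref{B1:pl:d}, the discrete Korn inequality~\ref{B0:pl:d}, and a discrete Poincar\'e inequality, $\|u\e\|$, $\|\nabla_s u\e\|$ and $\|p\e\|$ are bounded uniformly in $\eps$. Extracting a subsequence yields $u\e\stto u$ with $u\in H^1_0(Q)^d$. The characterization of two-scale limits of discrete projected gradients, which relies on an orthogonal decomposition of $L^2(\Omega)^k$ with respect to the subspace $L^2_s(\Omega)$ defined in~\eqref{eq:L2pot}, forces $\nabla_s u\e\stto \nabla_s u+\chi_s$ for some $\chi_s\in L^2_s(\Omega)\otimes L^2(Q)$; and $p\e\stto p\in L^2(\Omega)\otimes L^2(Q)^k$. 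Thus $y\e\ctto (u,p,\chi_s)\in Y_{\hom}$.

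The $\Gamma$-liminf for the energy follows from lower semicontinuity of the convex quadratic functional $\mathbb E[A(\omega)\cdot\,]$ along weak two-scale convergence, combined with strong convergence of the linear load term ensured by $l\e\stto l_{\hom}$. Lower semicontinuity of $\rcal\e$ along $\ctto$ uses convexity and positive $1$-homogeneity of $\rho(\omega,\cdot)$ together with a Fatou-type argument after unfolding; the growth bound in~\ref{B2:pl:d} supplies the integrability needed. The main obstacle is the simultaneous recovery construction: given $(u,p,\chi_s)\in Y_{\hom}$, one approximates the triple by elementary objects -- $u_\delta\in C^\infty_c(Q)$, $p_\delta(\omega,x)=\sum_{k}\varphi_k(\omega)\eta_k(x)$ with $\eta_k\in C^\infty_c(Q)$, and $\chi_{s,\delta}$ arising, by density in~\eqref{eq:L2pot}, from a random potential $\varphi_\delta\in L^2(\Omega)^d$ -- and sets
\begin{equation*}
u\e^\delta(x)\colonequals u_\delta(x)+\eps\,\varphi_\delta(\tau_{x/\eps}\omega),\qquad p\e^\delta(\omega,x)\colonequals p_\delta(\tau_{x/\eps}\omega,x).
\end{equation*}
The ergodic theorem~\ref{T:ergodic} then gives $y\e^\delta\ctto(u_\delta,p_\delta,\chi_{s,\delta})$ together with convergence of $\mathcal E\e(t,y\e^\delta)$ and $\rcal\e(\dot y\e^\delta)$ to their homogenized counterparts evaluated at the approximation, and an Attouch-type diagonal argument in $(\delta,\eps)$ yields the desired recovery sequence.

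The key observation that makes the recovery \emph{jointly} work for energy and dissipation is structural: the corrector $\varphi_\delta$ enters only the elastic contribution (through $\chi_{s,\delta}$), while $\rcal_{\hom}$ depends only on $\dot p$; hence the same diagonal sequence serves both functionals. Feeding (i)--(iv) into the abstract convergence theorem for quadratic ERIS recalled in Appendix~\ref{sec:eris} delivers stability of $y^0_{\hom}$ and $y\e(t)\ctto y_{\hom}(t)$ for every $t\in[0,T]$. The delicate technical point throughout is to verify that the two-scale limit of discrete symmetric gradients lives precisely in $\nabla_s u+L^2_s(\Omega)\otimes L^2(Q)$ and not in a larger subspace -- this is what couples the macroscopic and microscopic scales correctly in the homogenized state space $Y_{\hom}$.
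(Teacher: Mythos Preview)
The paper does not supply its own proof of this statement: Theorem~\ref{T:hom} is quoted from \cite[Theorem~4.10]{neukamm2017stochastic} and used as a black box. Your outline is a faithful sketch of the strategy in that reference---two-scale compactness via stochastic unfolding, lower semicontinuity of the convex energy and dissipation under weak two-scale convergence, a corrector-based recovery construction, all fed into the abstract evolutionary $\Gamma$-convergence framework for quadratic ERIS. The closest analogue actually proved in the present paper is Theorem~\ref{thm:1462:d}, whose Steps~1--6 mirror exactly the structure you describe.

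One imprecision: in your compactness step you write $u\e\stto u$, $\nabla_s u\e\stto\nabla_s u+\chi_s$, $p\e\stto p$ with the \emph{strong} two-scale arrow. Energy bounds alone deliver only \emph{weak} two-scale convergence; the upgrade to strong convergence (and hence to $\ctto$) comes a posteriori, after the energy balance forces $\E\e(t,y\e(t))\to\E_{\hom}(t,y_\hom(t))$, via the argument you see in Step~5 of the proof of Theorem~\ref{thm:1462:d}. Relatedly, the recovery condition that drives stability is the \emph{mutual} one: given a weakly convergent $y\e$ and a competitor $\tilde y\in Y_\hom$, one sets $\tilde y\e:=y\e+(\text{strong recovery of }\tilde y-y)$ and uses the quadratic expansion of $\E\e$ so that the cross term captures the weak limit (cf.\ Step~2 of the same proof). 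Your sketch alludes to ``joint'' recovery but does not isolate this additive trick, which is what makes the quadratic case tractable.
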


\subsection{Representation as generalized Prandtl--Ishlinskii models}\label{S:hyst}

We show that the homogenized ERIS can reformulated in the form of a force balance equations that only invokes the displacement $u$, which is a form that is more standard in the engineering literature. However, the elimination of the additional ``internal'' variables $(p,\chi_{s})$ leads to a complex, hysteretic stress-strain relation.
We show that the latter is described by a generalized Prandtl--Ishlinskii operator $\mathcal W_{\hom}$ that maps the history of the strain $\operatorname{sym}\nabla u\vert_{[0,t)}$ to the stress-tensor at time $t$. We refer to \cite{brokate1996hysteresis} for a standard reference on the mathematical theory of hysteresis operators, and remark that the classical Prandtl--Ishlinskii hysteresis operators denote a class of hysteresis operators obtained by a weighted average of elementary play operators---a construction that is widely used in rheological modeling. We shall see that in our situation the emerging hysteresis operator is a homogenization average of the zero-dimensional hysteresis operators describing the behavior of the individual springs.
Moreover, in contrast to rheological modeling, where the choice of the weights is modeled phenomenologically, in our setting the averaging weights are a result of homogenization.

More precisely, let $y_{\hom}=(u_{\hom},p_{\hom},\chi_{s,\hom})$ denote the energetic solution of the homogenized ERIS $(Y_{\hom},\mathcal E_{\hom},\rcal_{\hom})$ where for simplicity we assume throughout this section that the initial condition and the initial loading are trivial, i.e.,
\begin{equation}\label{eq:trivial:initial}
  y_{\hom}^0=0
  \qquad \text{and} \qquad
  l_{\hom}(0)=0.
\end{equation}
We shall see that $u_{\hom}$ is the unique solution in $W^{1,1}((0,T);H^1_0(Q))$ with initial value condition $u_{\hom}(t,\cdot)=0$ of the force balance equation
\begin{equation}\label{eq:hysteresis}
  -\nabla\cdot \sigma_{\hom}(t,\cdot)=l(t,\cdot)\qquad\text{in a distributional sense in $Q$ for all $t\in(0,T]$},
\end{equation}
where the stress tensor $\sigma_{\hom}$ is obtained
from $\operatorname{sym}\nabla u_{\hom}$ with help of hysteresis operator $\mathcal W_{\hom}$.

\begin{theorem}[Generalized Prandtl--Ishlinskii model]\label{T:hyst_hom}
  Consider the situation of Theorem~\ref{T:hom} and additionally assume the initial condition \eqref{eq:trivial:initial}. Then the $u_{\hom}$-component of the energetic solution satisfies for all $t\in[0,T]$ the force balance equation \eqref{eq:hysteresis} with $\sigma_{\hom}$ given by the stress--strain relation
  \begin{equation*}
    \sigma_{\hom}(t,x)=\mathcal W_{\hom}\big[\mathrm{sym}\nabla u_{\hom}(\cdot,x)\big](t)
  \end{equation*}
  where
  \begin{equation*}
    \mathcal W_{\hom}: W^{1,1}_{o}\to W^{1,1}_{o},
    \qquad
    W^{1,1}_{o}\colonequals \Big\{F\in W^{1,1}\big((0,T);\R^{d\times d}_{\rm sym}\big)\,:\,F(0)=0\Big\}
  \end{equation*}
  denotes the generalized Prandtl--Ishlinskii hysteresis operator introduced in Definition~\ref{D:Whom} below.
\end{theorem}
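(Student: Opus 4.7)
The plan is to derive the force balance equation by varying the energy of the homogenized ERIS in the macroscopic displacement, and then to express the resulting stress tensor as a functional of the strain history via a fiberwise ERIS on the probability space $\Omega$.

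Since the dissipation $\rcal_{\hom}$ depends only on $\dot p$ and not on $\dot u$, the energetic-solution conditions for $(Y_{\hom},\mathcal E_{\hom},\rcal_{\hom})$ imply that for every $t$ the Gateaux derivative $D_u\mathcal E_{\hom}(t,y_{\hom}(t))$ vanishes on $H^1_0(Q)^d$. Writing $A$ in block form with the $k\times k$ block $A_{11}$ acting on strain vectors and computing this variation using $\nabla_s v=P_s(\nabla v)$ yields the weak form of $-\nabla\cdot\sigma_{\hom}(t,x)=l(t,x)$ in $Q$ with
\begin{equation*}
\sigma_{\hom}(t,x)=P_s^*\Big(\mathbb E\big[A_{11}\big(\nabla_s u_{\hom}(t,x)+\chi_{s,\hom}(\cdot,t,x)\big)+A_{12}\,p_{\hom}(\cdot,t,x)\big]\Big),
\end{equation*}
where $P_s^*$ is the adjoint introduced in Remark~\ref{R:strainvector}.

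To express $\sigma_{\hom}(t,x)$ as a functional of $\operatorname{sym}\nabla u_{\hom}(\cdot,x)$ alone, I would localize the $(p,\chi_s)$-problem in $x\in Q$. Both $\mathcal E_{\hom}$ and $\rcal_{\hom}$ are integrals over $Q$ whose integrands depend on $(p(\cdot,x),\chi_s(\cdot,x))$ independently for each $x$; combined with convexity this allows the global stability condition and energy-dissipation balance to be reduced to their pointwise-in-$x$ analogues. Consequently, for a.e.\ $x$ the map $t\mapsto(p_{\hom}(\cdot,t,x),\chi_{s,\hom}(\cdot,t,x))$ is the unique energetic solution of a quadratic ``fiber ERIS'' on the state space $L^2(\Omega)^k\times L^2_s(\Omega)$, driven by the time-dependent input $P_s(\nabla u_{\hom}(t,x))$ and started at zero by virtue of \eqref{eq:trivial:initial}. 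This suggests defining $\mathcal W_{\hom}$ as follows: for $F\in W^{1,1}_{o}$, let $(p_F,\chi_{s,F})$ be the unique solution of the fiber ERIS with input $P_s(F(t))$ (existence and uniqueness given by Theorem~\ref{T:energetic_solution}), and set
\begin{equation*}
\mathcal W_{\hom}[F](t):=P_s^*\Big(\mathbb E\big[A_{11}\big(P_s(F(t))+\chi_{s,F}(\cdot,t)\big)+A_{12}\,p_F(\cdot,t)\big]\Big).
\end{equation*}
The desired identity $\sigma_{\hom}(t,x)=\mathcal W_{\hom}[\operatorname{sym}\nabla u_{\hom}(\cdot,x)](t)$ then follows from uniqueness of fiber solutions, and the mapping property $\mathcal W_{\hom}\colon W^{1,1}_{o}\to W^{1,1}_{o}$ from the standard Lipschitz data-to-solution estimate for quadratic ERIS.

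The main technical hurdle will be the rigorous pointwise-in-$x$ decoupling of the internal-variable problem. While formally immediate from the tensor-product structure of $Y_{\hom}$ and the separability of the densities in $x$, at the abstract level of energetic solutions one must apply Fubini to the stability inequality and to the energy-dissipation balance, and use convexity and nonnegativity of the densities to convert the global balance into a pointwise-in-$x$ balance (the pointwise stability inequality providing one direction and the integral identity the other). A secondary point is to verify adequate measurability and regularity in $x$ of the fiber solutions, which follow from the Lipschitz dependence of quadratic ERIS on their driving input together with $\nabla u_{\hom}\in W^{1,1}((0,T);L^2(Q;\R^{d\times d}))$.
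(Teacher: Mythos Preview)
Your proposal is correct and follows the same overall strategy as the paper: split the force balance into the $u$-part (yielding the weak divergence equation with the stress formula) and the $(p,\chi_s)$-part, then identify the latter with a family of fiber ERIS on $\widehat Y_{\hom}=L^2(\Omega)^k\times L^2_s(\Omega)$ driven by $F(t,x)=\operatorname{sym}\nabla u_{\hom}(t,x)$.

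The one genuine difference is the direction in which you carry out the identification. You propose to start from the global solution $(p_{\hom},\chi_{s,\hom})$ and \emph{decompose} the global stability and energy balance into pointwise-in-$x$ conditions via Fubini and convexity; you correctly flag this as the main technical hurdle. The paper instead goes the other way: for each $x$ it takes the (automatically existing and unique) energetic solution $\hat y(\cdot,x)$ of the fiber ERIS with input $F(\cdot,x)$, observes that its force balance $0\in\partial\widehat{\rcal}_{\hom}(\dot{\hat y}(t,x))+D_{\hat y}\widehat{\mathcal E}_{\hom}(\hat y(t,x);F(t,x))$ holds for a.e.~$t$ and $x$, and then \emph{integrates in $x$} to recover the global inclusion for the $(p,\chi_s)$-component. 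Uniqueness of the global energetic solution then forces $\hat y=\hat y_{\hom}$. This reversal sidesteps precisely the decomposition step you identified as delicate, at the modest price of checking measurability of $x\mapsto\hat y(\cdot,x)$ (which follows, as you note, from Lipschitz dependence on the input). Your route works but is heavier; the paper's route buys you a cleaner argument by exploiting uniqueness at the global rather than the fiber level.
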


The definition of the hysteresis operator $\mathcal W_{\hom}$ relies on an ERIS $(\widehat Y_\hom,\widehat{\mathcal E}_\hom(\cdot;F),\widehat{\rcal}_\hom)$ with state space
\begin{equation*}
\widehat Y_\hom \colonequals L^2(\Omega)^k\times L^2_s(\Omega),
\end{equation*}
and an energy functional $\widehat{\mathcal E}_\hom(\cdot;F)$ that invokes a loading term
 $\ell_F:[0,T]\to {\widehat Y_\hom}^*$,
\begin{equation*}
  \expect{\ell_F(t),\hat y} \colonequals   \expect{\ell_F(t),\binom{\hat p}{\hat \chi_s}}\colonequals -\mathbb E\Big[A(\omega)
  \begin{pmatrix}
    P_s(F(t))\\0
  \end{pmatrix}\cdot\binom{\hat\chi_s}{\hat p}\Big]\qquad\text{for all }t\in[0,T].
\end{equation*}
It is parameterized by a strain trajectory $F\in W^{1,1}_o$ that plays the role of the input of the hysteresis operator. The energy functional $\widehat{\E}_\hom(\cdot;F):[0,T]\times \widehat Y)_\hom\to\R$ and the dissipation functional $\widehat{\rcal}_\hom:\widehat Y\to[0,\infty]$ are
\begin{equation}\label{def:hat_ERIS}
  \begin{aligned}
    \widehat{\mathcal E}_\hom(t,\hat y;F)
    &\colonequals
    \widehat{\mathcal E}_\hom(t,\hat p,\hat\chi_s;F)
    \colonequals
    \frac12\mathbb E\Big[A(\omega)
    \begin{pmatrix}
      \hat\chi_s\\\hat p
    \end{pmatrix}\cdot   \begin{pmatrix}
      \hat\chi_s\\\hat p
    \end{pmatrix}\Big]-\expect{\ell_F(t),\hat y}\\
    \widehat{\rcal}_\hom(\dot{\hat y})&\colonequals\widehat{\rcal}_\hom(\dot{\hat p},\dot{\hat\chi}_s):=\mathbb E[\rho(\omega,\dot{\hat p})].
  \end{aligned}
\end{equation}
The system $(\widehat Y_\hom,\widehat{\mathcal E}_\hom(\cdot;F),\widehat{\rcal}_\hom)$ is a quadratic ERIS in the sense of Definition~\ref{D:ERIS}. In particular, by Theorem~\ref{T:energetic_solution}, for any $F\in W^{1,1}_0$ there exists a unique energetic solution in $\hat y_\hom\in W^{1,1}((0,T);\widehat Y)$ with $\hat y_\hom(0)=0$. We conclude that the map
\begin{equation*}
  [0,T]\ni t\mapsto \mathbb E\Big[A
  \begin{pmatrix}
    P_s(F(t))+\hat\chi_{s,\hom}(t)\\\hat p_\hom(t)
  \end{pmatrix}\Big]\in\R^k
\end{equation*}
is of class $W^{1,1}((0,T);\R^{2k})$. Moreover, it vanishes for $t=0$, since $P_s(F(0))=\hat\chi_{s,\hom}(0)=\hat p_\hom(0)=0$ by assumption.
We summarize this construction in the following definition.

\begin{definition}[The hysteresis operator $\mathcal W_{\hom}$]\label{D:Whom}
  We define
  \begin{equation*}
    \mathcal W_{\hom}: W^{1,1}_o\to W^{1,1}_o
  \end{equation*}
  as follows:
  For an input function $F\in W^{1,1}_o$ let $\hat y_\hom=(\hat p_\hom,\hat\chi_{s,\hom})\in W^{1,1}((0,T);\R^{d\times d}_{\rm sym})$ denote the unique energetic solution to $(\widehat Y_\hom,\widehat{\mathcal E}_\hom(\cdot;F),\widehat{\rcal}_\hom)$ with initial condition $\hat y_\hom(0)=0$. Define for all $t\in[0,T]$,
  \begin{equation*}
    \mathcal W_{\hom}[F](t) \colonequals (P_{s}^*\circ\pi_k)\Bigg(\mathbb E\Big[A
    \begin{pmatrix}
      P_s(F(t))+      \hat\chi_{s,\hom}(t)\\\hat p_\hom(t)
    \end{pmatrix}\Big]\Bigg),
  \end{equation*}
  where $\pi_k:\R^{2k}\to\R^k$ denotes the projection of a vector $(a_1,\ldots, a_{2k})$ to its first $k$ coordinates $(a_1,\ldots, a_k)$, and $P_s^*$ denotes the operator that maps strain vectors to strain matrices defined in Remark~\ref{R:strainvector}.
\end{definition}

%%% Local Variables:
%%% mode: latex
%%% TeX-master: "rve_plasticity"
%%% End:

\section{Representative volume element approximation}
As explained in Section~\ref{S:two-scale_system} the homogenized ERIS $(Y_{\hom},\E_{\hom},\rcal_{\hom})$
is a two-scale system that involves, besides the macroscopic displacement $u_{\hom}$ the
additional degrees of freedom $(p_{\hom},\chi_{s,\hom})$ that describe the evolution
of the microscopic plastic strain and microscopic displacement.
For each material point $x\in Q$ and time $t$, these quantities are described
by random vectors defined on the probability space $(\Omega,\mathcal F,P)$,
which typically is a product space with an infinite number of factors (see, e.g., Example~\ref{EX2}).

In view of this, finite element (FE) approximations of the time incremental problems
need to discretize both the macroscopic displacement space $H^1(Q)^d$ and the random
space $L^2(\Omega)$, where $\Omega$ denotes the probability space and practical examples is often given as the $\Z^d$-fold product of a simple probability space, cf.~Example~\ref{E:iid}.
For the latter, we introduce a representative volume element (RVE) approximation
for the homogenized system. It exploits ergodicity in the sense that expectations of stationary
random fields are approximated by means of spatial averages on large but finite domains.
The approximation takes itself the form of a quadratic ERIS $(Y_L^\omega,\mathcal E_L^\omega,\rcal_L^\omega)$ parameterized by a parameter $L$ that encodes the size of the averaging domain.
Note that the ERIS is a \emph{quenched} model: The coefficients of the ERIS are also
parameterized by a configuration $\omega\in\Omega$ that is sampled according to $P$. Likewise, we introduce an RVE approximation for the generalized Prandtl--Ishlinskii operator $\mathcal W_{\hom}$. The main result of this section is
the proof that both approximations converge in $L^2(\Omega)$ as $L\to\infty$.

The above described procedure is similar to the $\textup{FE}^2$-method from numerical homogenization, see, e.g.,  \cite{feyel1999multiscale,miehe2002strain,schroder2013plasticity,nejad2019parallel,eigel2015convergent}.
There, a macroscopic discretization of the physical domain $Q$ is considered and to each
quadrature point in it,
another (discretized) representative volume domain is assigned, which serves for the computation of the averaged stresses.

\subsection{Motivation and definition of  the RVE problem \texorpdfstring{$(Y_L^\omega,\mathcal E_L^\omega,\rcal_L^\omega)$}{(YL,EL,RL)}}\label{S:RVE_def}

Reconsider the situation described in Section~\ref{S:two-scale_system}.
The numerical evaluation of the expected value~\eqref{eq:ergodicity_exploit}
(which in particular requires to compute $\chi_s$ first)
involves summation over an infinite lattice, which cannot be performed computationally.

An approximation of the limit in~\eqref{eq:ergodicity_exploit} is obtained by
setting the domain size $L \in \mathbb{N}$ to a large but fixed number. In other words,
we approximate the expectation by the spatial average over a \emph{large, but finite} volume
\begin{equation*}
  \Lambda_L\colonequals \Z^d\cap[0,L)^d\qquad \text{ with }L\in\N\text{ large},
\end{equation*}
and by replacing the fields $p^{\omega,x}$ and $\varphi^{\omega,x}$ (which in \eqref{eq:ergodicity_exploit} are functions defined on the infinite domain $\Z^d$) by approximations  $p_L^{\omega,x}$ and $\varphi^{\omega,x}_L$ that are defined on the finite volume $\Lambda_L$ and that can be computed pathwise, i.e., for fixed samples $\omega\in\Omega$.

To implement this idea, one needs to modify the state space of the system: Morally speaking,
$L^2(\Omega)$ and $L^2_s(\Omega)$ are replaced by gradient fields of functions defined on $\Lambda_L$.
Moreover, in order to get a \emph{coercive} energy functional defined on the modified state space, we have
restrict the space of functions on $\Lambda_L$. In particular, it is necessary to introduce boundary conditions for  $\varphi^{\omega,x}_L$. These conditions need to be consistent in the sense that $\nabla\varphi^{\omega,x}_L$ is mean free, i.e., $\sum_{z\in\Lambda_L}\nabla\varphi_L^{\omega,x}(z)=0$, which corresponds to the Hill--Mandel condition in computational homogenization and mechanically means that the microscopic fluctuation of the strain  described by $\varphi_L^{\omega,x}$ does not contribute to the macroscopic (averaged) strain.

In this paper we use \emph{periodic} boundary conditions:
For $L\in\N$ we define the following spaces of $L$-periodic discrete functions,
\begin{align*}
  L^2(\Lambda_L)&\colonequals \bigg\{f:\Z^d\to\R\,:\,f(\cdot+k)=f\text{ for all $k\in L\Z^d$}\,\bigg\},\\
  L^2_{\mathrm{av}}(\Lambda_L)&\colonequals \bigg\{f\in L^2(\Lambda_L)\,:\,\sum_{z\in\Lambda_L}f(z)=0\bigg\},
\end{align*}
which we equip with the norms
\begin{equation*}
  \|f\|_{L^2(\Lambda_L)}\colonequals \big(L^{-d}\sum_{z\in\Lambda_L}|f(z)|^2\big)^\frac12,\qquad \|f\|_{L^2_{\mathrm{av}}(\Lambda_L)}:=\frac1L\big(L^{-d}\sum_{z\in\Lambda_L}|f(z)|^2\big)^\frac12,
\end{equation*}
that turn these spaces into Hilbert spaces.
Thanks to the mean free-condition and the scaling incorporated in the definition of $(L^2_{\mathrm{av}}(\Lambda_L),\|\cdot\|_{L^2_{\mathrm{av}}(\Lambda_L)})$ the following Poincaré inequality
\begin{equation*}
  \|f\|_{L^2_{\mathrm{av}}(\Lambda_L)}\leq \|\nabla f\|_{L^2(\Lambda_L)}
\end{equation*}
holds for all $f\in L^2_{\mathrm{av}}(\Lambda_L)$.
Furthermore, we have the Korn inequality
\begin{equation*}
  \|\varphi\|_{L^2_{\mathrm{av}}(\Lambda_L)}\leq c\|\nabla_s \varphi\|_{L^2(\Lambda_L)}
\end{equation*}
for all periodic, mean-free displacements $\varphi\in L^2_{\mathrm{av}}(\Lambda_L)^d$ with a constant $c$ that is independent of $L$.

The state space for the quenched version of the RVE system $(Y^\omega_L,\E^\omega_L,\rcal^\omega_L)$
is now obtained by replacing $L^2(\Omega)$ in definition of $Y_{\hom}$, cf.~\eqref{D:Yhom}, by $L^2(\Lambda_L)$:
\begin{equation*}
  Y^\omega_L\colonequals H^1_0(Q)^d\times \big(L^2(\Lambda_L)\otimes L^2(Q)\big)^k\times \big(L^2_{\mathrm{av}}(\Lambda_L)\times L^2(Q)\Big)^d.
\end{equation*}
(Note that $Y^\omega_L$ is independent of $\omega$. Nevertheless, we write the superscript $\omega$
because $Y_L^\omega$ is the state space for the quenched model). In the following we write
$y_L=(u_L,p_L,\varphi_L)$ for $y_L\in Y^\omega_L$.

Let $\omega$ be an element of the probability space $\Omega$. For each such $\omega$
we define the energy functional
\begin{multline*}
  \E_{L}^\omega(t,y_L)\\
  \colonequals
   \int_{Q}\bigg(\frac1{L^{d}}\sum_{z\in\Lambda_L}\frac12 A(\tau_z\omega)\binom{P_s\nabla u_L(x)+\nabla_{s,z}\varphi_L(z,x)}{p_L(z,x)}\cdot \binom{P_s\nabla u_L(x)+\nabla_{s,z}\varphi_L(z,x)}{p_L(z,x)}\bigg)\,dx\\
   -\int_Ql(t,x)\cdot u_L(x)\,dx,
\end{multline*}
where use the notation $\nabla_{s,z}$ to highlight that $\nabla_s$ acts on the $z$-variable.
We define the dissipation functional
\begin{equation*}
\rcal_{L}^\omega(\dot y_L)
\colonequals
\int_{Q}\Big(\frac1{L^{d}}\sum_{z\in\Lambda_L}\rho(\tau_{z}\omega,\dot{p}_L(z,x))\Big)\,dx.
\end{equation*}

\begin{remark}[Existence and uniqueness for the quenched RVE model]\label{R:RVE}
  Under assumptions \ref{B0:pl:d}--\ref{B2:pl:d}, \ref{item:assumption_measurability}--\ref{item:assumption_ergodicity}, and for $l\in W^{1,1}((0,T);L^2(Q)^d)$, the system $(Y_L^\omega,\mathcal E_L^\omega,\rcal_L^\omega)$ is for $P$-a.e.~$\omega\in\Omega$ a quadratic ERIS in the sense of Definition~\ref{D:ERIS} and we may appeal to Theorem~\ref{T:energetic_solution} for the existence and uniqueness of an energetic solution to the initial value problem. We note that for fixed $\omega$, the ERIS is deterministic in the sense that it invokes only functions defined on the bounded domain $Q\times\Lambda_L\subset\R^d\times\Z^d$. Furthermore, the quadratic part of $\E^\omega_L$ is coercive uniformly in $L$, i.e.,
  \begin{equation*}
    \E_{L}^\omega(t,y_L)+\int_Ql(t)\cdot u_L\,dx\geq C\|y_L\|^2_{Y^\omega_L}.
  \end{equation*}
  where $C>0$ is independent of $L$, $t$ and $\omega$.
\end{remark}

\subsection{Convergence of the RVE approximation}\label{S:RVE_conv}
The convergence for $L\to\infty$ of the RVE model described as the ERIS $(Y^\omega_L,\E^\omega_L,\rcal^\omega_L)$ holds in the mean in the strong $L^2(\Omega)$-topology. For the precise statement, we introduce a mean version of. Morally speaking, it is obtained by taking the expectation of the quenched RVE ERIS with respect to the probability measure $P$. To this end we introduce the state space
\begin{equation*}
Y_L
\colonequals
\brac{L^2(\Omega)\otimes H^1_0(Q)}^d\times \brac{L^2(\Omega)\otimes L^2(\Lambda_L) \otimes L^2(Q)}^k \times \brac{L^2(\Omega) \otimes L^2(\Lambda_L)\otimes L^2(Q)}^d,
\end{equation*}
(which is equivalent to $L^2(\Omega)\otimes Y^\omega_L$), and the energy functional and dissipation functional defined by integrating the quenched functionals, i.e., 
\begin{equation*}
  \E_{L}(t,y_L)\colonequals \mathbb E\Big[\E_L^\omega(t,y_L(\omega,\cdot))\Big],
  \qquad
  \rcal_{L}(\dot{y}_L) \colonequals \mathbb E\Big[\rcal_{L}^\omega(\dot{y}_L(\omega,\cdot))\Big].
\end{equation*}

  Under assumptions \ref{B0:pl:d}--\ref{B2:pl:d}, \ref{item:assumption_measurability}--\ref{item:assumption_ergodicity}, and for $l\in W^{1,1}((0,T);L^2(Q)^d)$, the system $(Y_L,\mathcal E_L,\rcal_L)$ is a quadratic ERIS in the sense of Definition~\ref{D:ERIS} and we may appeal to Theorem~\ref{T:energetic_solution} for the existence and uniqueness of an energetic solution to the initial value problem with a stable initial value $y_L^0\in Y_L$.

\begin{remark}[Relation to the quenched model]\label{R:reltoquenched}
  We may view the mean model $(Y_L,\mathcal E_L,\rcal_L)$ as an integrated version of the
  quenched model $(Y^\omega_L,\mathcal E_L^\omega,\rcal_L^\omega)$: Indeed, by definition
  we may identify $Y_L$ with $L^2(\Omega)\otimes Y^\omega_L$.
  Thus, for $P$-a.e.~$\omega\in\Omega$, we have $y^0_L(\omega,\cdot)\in Y^\omega$. Assume that $y^0_L(\omega,\cdot)\in S^\omega_L(0)$ for $P$-a.e.~$\omega\in\Omega$ and denote by $y^\omega_L$ the unique energetic solution for the quenched ERIS $(Y^\omega_L,\E^\omega_L,\rcal^\omega_L)$ with initial condition $y^\omega_L(0)=y^0_L(\omega,\cdot)$. Then, as shown in \cite[Lemma 7.16]{varga2019stochastic} it turns out that
  \begin{equation*}
    y^\omega_L(t)=y_L(\omega,t)\qquad\text{for a.a.~$t\in[0,T]$ and $P$-a.e. $\omega\in\Omega$.}
  \end{equation*}
\end{remark}

% \begin{remark}[Existence and a priori estimates]\label{rem:458:a}
% If we assume \ref{B0:pl:d}-\ref{B2:pl:d}, $l\in C^1([0,T],L^2(Q)^d)$ and $y^0_L\in S_L(0)$,
% then the general conditions \eqref{eq:386:p} and \eqref{eq:400:p} are fulfilled and thus
% there exists a unique energetic solution $y_L \in C^{\mathrm{Lip}}([0,T],Y_L)$
% to the ERIS $\brac{Y_L,\mathcal{E}_{L},\rcal_L}$ with $y_L(0)=y^0_L$.
% Moreover, $\|y_{L}(t)-y_{L}(s)\|_{Y_L}\leq c|t-s|$ for all $t,s \in [0,T]$, with a $c>0$
% that does not depend on $L$.
% \end{remark}

A central result of this paper is the proof of convergence of the mean RVE ERIS to the homogenized ERIS as $L\to\infty$. Since the state spaces $Y_L$ and $Y_{\hom}$ are different, we first need to specify a suitable notion of convergence. To that end we introduce the linear transformation
\begin{equation}\label{eq:TL}
  \begin{aligned}
    &\mathcal T_L:L^2(\Omega)\otimes L^2(\Lambda_L)\otimes L^2(Q)\to L^2(\Omega)\otimes L^2(\Lambda)\otimes L^2(Q)\\
    &\qquad (\mathcal T_L f)(\omega,z,x)\colonequals f(\tau_{-\lfloor Lz\rfloor}\omega,\lfloor Lz\rfloor,x),
    % &\qquad (\mathcal T_L f)(\omega,z,x)\colonequals \sum_{\hat z\in\Lambda_L}\mathbf 1_{\hat z+[0,1)^d}(Lz)f(\tau_{-\hat z}\omega,\hat z,x).
  \end{aligned}
  \end{equation}
  where $\Lambda\colonequals [0,1)^d$ and $\lfloor Lz\rfloor=\hat z\in\Z^d$ if and only if $Lz\in\hat z+\Lambda$.
  It invokes a scaling and a piecewise-constant interpolation with respect to the $z$-variable in order to map the discrete space $L^2(\Lambda_L)$ to the continuum space $L^2(\Lambda)$. Moreover, the definition is closely related to the discrete stochastic unfolding operator introduced in \cite{neukamm2017stochastic} (see Proof of Lemma~\ref{L:unfh} for details). As we shall prove in Lemma~\ref{L:unfh}, thanks to assumption (S3), the operator $\mathcal T_L$ is a linear, (non-surjective) isometry. In particular, for all $f\in L^2(\Omega)\otimes L^2(\Lambda_L)\otimes L^2(Q)$ we have
  \begin{equation*}
    \mathbb E\bigg[L^{-d}\sum_{z\in\Lambda_L}\int_Q|f(\omega,z,x)|^2\,dx\bigg]
    =
    \mathbb E\Big[\int_{\Lambda}\int_Q|\mathcal T_Lf(\omega,z,x)|^2\,dx\,dz\Big].
  \end{equation*}
  Moreover, a sequence $(f_L)\subset L^2(\Omega)\otimes L^2(\Lambda_L)\otimes L^2(Q)$ whose image $(\mathcal T_Lf_L)$ is a Cauchy sequence, strongly two-scale converges in the mean in the sense of Definition~\ref{D:two-scale}. In the following we tacitly view $L^2(Q)$ and $L^2(\Omega)\otimes L^2(Q)$ as subspaces of $L^2(\Omega)\otimes L^2(\Lambda)\otimes L^2(Q)$.
\begin{theorem}[Strong two-scale convergence of the approximation]\label{thm:1462:d}
  Assume \ref{B0:pl:d}--\ref{B2:pl:d}, \ref{item:assumption_measurability}--\ref{item:assumption_ergodicity}, and  $l\in W^{1,1}((0,T);L^2(Q)^d)$.
  Set $\|\cdot\|\colonequals\|\cdot\|_{L^2(\Omega)\otimes L^2(\Lambda)\otimes L^2(Q)}$.
  Let $y_{L}^0\in Y_{L}(0)$ denote a sequence of stable initial states, and assume convergence of the initial values in the sense that for $L\to\infty$ we have
  \begin{align*}
    & \|u_L^0 - u^0_{\hom}\|_{L^2(\Omega)\otimes H^1_0(Q)}+\|\mathcal T_Lp_L^0-p^0_{\hom}\|+\|\mathcal T_L\nabla_{s,z}\varphi^0_L-\chi_{s,\hom}^0\|+L^{-1}\|\mathcal T_L\varphi_L^0\|\to 0
  \end{align*}
  for some $y^0_{\hom}=(u^0_\hom,p^0_\hom,\chi^0_{s,\hom})\in Y_{\hom}$. 
  Then $y^0_{\hom}\in S_{\hom}(0)$ and the unique energetic solutions $y_{L}$ and $y_{\hom}$ of $\brac{Y_{L},\mathcal{E}_{L},\rcal_{L}}$ and $(Y_{\hom},\E_{\hom},\rcal_{\hom})$ with $y_{L}(0)=y_{L}^0$ and $y_{\hom}(0)=y^0_{\hom}$ satisfy for a.a.~$t\in [0,T]$,
  \begin{equation*}
    \|u_L(t) - u_{\hom}(t)\|_{L^2(\Omega)\otimes H^1_0(Q)}+\|\mathcal T_Lp_L(t)-p_{\hom}(t)\|+\|\mathcal T_L\nabla_{s,z}\varphi_L(t)-\chi_{s,\hom}(t)\|+L^{-1}\|\mathcal T_L\varphi_L(t)\|\to 0
  \end{equation*}
  and the energy functionals converge,
  \begin{equation*}
    \E_{L}(t,y_L(t))\to \E_{\hom}(t,y_{\hom}(t)).
  \end{equation*}
\end{theorem}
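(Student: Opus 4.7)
The proof proceeds via evolutionary $\Gamma$-convergence. The family $(Y_L,\mathcal E_L,\rcal_L)$ is a sequence of quadratic ERIS in the sense of Definition~\ref{D:ERIS}, and the candidate limit $(Y_\hom,\mathcal E_\hom,\rcal_\hom)$ is a quadratic ERIS as well. My plan is to embed everything into the common space $\mathcal H \colonequals L^2(\Omega)\otimes L^2(\Lambda)\otimes L^2(Q)$ via the isometric transformation $\mathcal T_L$ of \eqref{eq:TL} and apply a standard abstract result for the evolutionary $\Gamma$-convergence of quadratic ERIS (for instance from \cite{mielke2005evolution,mielke2016evolutionary}), which reduces the claim to (i)~suitable Mosco-type convergence of the quadratic energies and dissipations together with (ii)~well-preparedness of the initial data. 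The final strong two-scale convergence then follows from convergence of the energies along the solutions, a property that is built into the abstract evolutionary $\Gamma$-convergence theorem for quadratic systems.

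\textbf{Step 1 (Isometry, coercivity, compactness).} I would first prove Lemma~\ref{L:unfh}, i.e., that $\mathcal T_L$ is a linear isometry from $L^2(\Omega)\otimes L^2(\Lambda_L)\otimes L^2(Q)$ into $\mathcal H$; this uses measure preservation \ref{S:stat} and is analogous to the isometry property of the discrete stochastic unfolding operator in \cite{neukamm2017stochastic}. I also need the commutation rule $\mathcal T_L(\nabla_{s,z}\varphi_L)\approx \nabla_{s,\mathrm{disc}}(\mathcal T_L\varphi_L)$ up to a translation on $\Omega$ implementing the shift $\tau$. Combining this isometry with uniform coercivity of $\mathcal E_L^\omega$ (Remark~\ref{R:RVE}), the discrete Korn and Poincaré inequalities, and $W^{1,1}((0,T);L^2(Q)^d)$-regularity of $l$, I obtain a uniform bound on $y_L(t)$ that, after applying $\mathcal T_L$, yields weak two-scale compactness in $\mathcal H$ for $p_L$, $\nabla_{s,z}\varphi_L$, and $L^{-1}\varphi_L$; the displacement component $u_L$ is bounded in $L^2(\Omega)\otimes H^1_0(Q)$ independently of $L$.

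\textbf{Step 2 (Mosco convergence of the quadratic functionals).} For the liminf inequality, I would take a sequence $y_L\in Y_L$ with uniform energy bound whose image under $\mathcal T_L$ two-scale-converges weakly to some limit $y\in Y_\hom$. The key point is to identify the weak limit of $\mathcal T_L\nabla_{s,z}\varphi_L$ as an element of $L^2_s(\Omega)\otimes L^2(Q)$, which requires checking that the limiting field lies in the closure \eqref{eq:L2pot} of random, projected partial derivatives. This is the discrete analog of showing that weak two-scale limits of gradients on a torus of size $L$ are potential random fields; it follows from a duality argument against mean-free test functions using the ergodic theorem \ref{T:ergodic} and \ref{item:assumption_ergodicity}. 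Once the limit lies in the correct subspace, weak lower semicontinuity of the quadratic form with coefficient $A$ and convexity of $\rho$ yield
\begin{equation*}
  \liminf_{L\to\infty}\mathcal E_L(t,y_L)\ge\mathcal E_\hom(t,y),\qquad \liminf_{L\to\infty}\rcal_L(\dot y_L)\ge\rcal_\hom(\dot y).
\end{equation*}
For the recovery sequence, I would first prove the statement for dense test elements $y=(u,p,\chi_s)$ with $u\in C^\infty_c(Q)^d$, $p(\omega,x)=\sum_j p_j(\omega)\eta_j(x)$ and $\chi_s(\omega,x)=\sum_j\nabla_s\phi_j(\tau_\cdot\omega)\eta_j(x)$ (with $\phi_j\in L^2(\Omega)^d$, $\eta_j\in C^\infty_c(Q)$), and then conclude by a diagonal argument using density of these tensor fields in $Y_\hom$. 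For such $y$, one defines $p_L(\omega,z,x)\colonequals \sum_j p_j(\tau_z\omega)\eta_j(x)$ and $\varphi_L(\omega,z,x)\colonequals \sum_j(\phi_j(\tau_z\omega)-c_{j,L}(\omega,x))\eta_j(x)$ where $c_{j,L}$ is the spatial mean over $\Lambda_L$ needed to enforce membership in $L^2_\mathrm{av}(\Lambda_L)^d$. Convergence of $\mathcal T_L$-images follows from the ergodic theorem, and matching of the energies follows as well.

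\textbf{Step 3 (Stability of initial data and conclusion).} Well-preparedness of $y_L^0$ is assumed directly in the hypothesis. The stability of the limit $y^0_\hom$ (i.e., $y^0_\hom\in S_\hom(0)$) is obtained from the stability of $y_L^0$ by a $\liminf$-$\limsup$ argument combining Step~2 with the recovery construction for competitors in $Y_\hom$. Once stability and Mosco-convergence of the quadratic energies (uniform in $t$, which holds because the time dependence enters only through the linear loading $l$, which is the same for both systems) are established, the abstract evolutionary $\Gamma$-convergence theorem for quadratic ERIS yields convergence of energetic solutions in the weak two-scale sense together with convergence of the energies $\mathcal E_L(t,y_L(t))\to\mathcal E_\hom(t,y_\hom(t))$. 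Convergence of energies in a uniformly coercive quadratic setting upgrades the weak convergence of all components to strong convergence, giving the stated strong two-scale convergence.

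\textbf{Main obstacle.} The technically most delicate point is the construction of the recovery sequence for the fluctuation variable $\chi_s$, respectively of the accompanying liminf identification that the two-scale weak limit of $\nabla_{s,z}\varphi_L$ actually lies in $L^2_s(\Omega)\otimes L^2(Q)$. On the recovery side one has to cope with the periodicity and mean-free constraint encoded in $L^2_\mathrm{av}(\Lambda_L)^d$, which prevents directly restricting a stationary representative; the mean-correction $c_{j,L}$ together with the $L^{-1}$-scaling of $\|\varphi_L\|_{L^2_\mathrm{av}}$ is precisely what guarantees that this correction becomes negligible in the energy as $L\to\infty$. On the liminf side, the identification of the potential structure of the limit requires passing from a periodic average on $\Lambda_L$ to the expectation on $\Omega$, which is exactly where ergodicity \ref{item:assumption_ergodicity} enters in a non-trivial way via Theorem~\ref{T:ergodic}.
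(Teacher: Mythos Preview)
Your overall structure --- compactness via the isometry $\mathcal T_L$, a liminf/recovery argument, passage to the limit in the energetic formulation, and an upgrade to strong convergence using convergence of energies --- matches the paper's strategy. However, there is a genuine gap in your Step~2 that the paper handles by introducing an \emph{auxiliary} ERIS.

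You claim that the weak two-scale limit of $\mathcal T_L\nabla_{s,z}\varphi_L$ lies in $L^2_s(\Omega)\otimes L^2(Q)$, and that this ``follows from a duality argument against mean-free test functions using the ergodic theorem''. This is not true in general. According to Lemma~\ref{lem:1204}(iii), for a bounded sequence $\varphi_L\in (L^2(\Omega)\otimes L^2_{\mathrm{av}}(\Lambda_L)\otimes L^2(Q))^d$ the limit of $\mathcal T_L\nabla_{s,z}\varphi_L$ has the form $P_s\nabla_z\varphi+\chi_s$ with $\varphi\in (H^1_{\mathrm{per,av}}(\Lambda)\otimes L^2(Q))^d$ possibly nonzero and $\chi_s\in L^2_s(\Omega)\otimes L^2(\Lambda)\otimes L^2(Q)$ possibly $z$-dependent; likewise $\mathcal T_Lp_L$ may converge to a $z$-dependent $p$. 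Hence the Mosco limit of $\mathcal E_L$ (extended by $+\infty$ on the ambient space $\mathcal H$) is \emph{not} $\mathcal E_\hom$, but the functional $\widetilde{\mathcal E}$ defined on the larger state space
\begin{equation*}
\widetilde Y=H^1_0(Q)^d\times (L^2(\Omega)\otimes L^2(\Lambda)\otimes L^2(Q))^k\times (H^1_{\mathrm{per,av}}(\Lambda)\otimes L^2(Q))^d\times (L^2_s(\Omega)\otimes L^2(\Lambda)\otimes L^2(Q)).
\end{equation*}
Your liminf inequality $\liminf\mathcal E_L(t,y_L)\ge\mathcal E_\hom(t,y)$ therefore fails whenever the limit falls outside $Y_\hom$ but still has finite energy, so you cannot invoke an abstract evolutionary $\Gamma$-convergence theorem with target $(Y_\hom,\mathcal E_\hom,\rcal_\hom)$ directly.

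The paper resolves this by first proving convergence to the energetic solution $w$ of the auxiliary ERIS $(\widetilde Y,\widetilde{\mathcal E},\widetilde{\rcal})$ (Steps~1--5), and only afterwards (Step~6) showing that $\iota(y_\hom)\colonequals(u_\hom,p_\hom,0,\chi_{s,\hom})$ is also an energetic solution of the auxiliary ERIS with the same initial data. The identification $w=\iota(y_\hom)$ then follows from uniqueness. The key ingredient in Step~6 is a Jensen-type inequality: for any $\tilde w=(\tilde u,\tilde p,\tilde\varphi,\tilde\chi_s)\in\widetilde Y$, averaging in $z$ produces a competitor $\bar y\in Y_\hom$ with $\mathcal E_\hom(t,\bar y)\le\widetilde{\mathcal E}(t,\tilde w)$ and $\rcal_\hom(\bar y-y_\hom(t))\le\widetilde{\rcal}(\tilde w-\iota(y_\hom(t)))$, which transfers stability from $Y_\hom$ to $\widetilde Y$. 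This extra step --- identifying $\varphi=0$ and eliminating the $z$-dependence a posteriori via uniqueness rather than a priori via compactness --- is precisely what is missing from your outline.
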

The proof is given in Section~\ref{sec:proofs}.

\subsection{The RVE problem as a Prandtl--Ishlinskii model}\label{S:RVE-PI}

We reformulate the ERIS $(Y^\omega_L,\E^\omega_L,\rcal^\omega_L)$ as a generalized Prandtl--Ishlinskii model. This can be done analogously to Section~\ref{S:hyst}. To that end we first define a generalized Prandtl--Ishlinskii hysteresis operator
  \begin{equation*}
    \mathcal W_{L}^\omega: W^{1,1}_{o}\to W^{1,1}_{o},\qquad W^{1,1}_{o}\colonequals \{F\in W^{1,1}((0,T);\R^{d\times d}_{\rm sym})\,:\,F(0)=0\}
  \end{equation*}
  as follows: For a given input $F\in W^{1,1}_o$ we consider the quadratic ERIS  $(\widehat Y_L^\omega,\widehat{\mathcal E}_L^\omega(\cdot;F),\widehat{\rcal}_L^\omega)$ with
\begin{itemize}
\item state space $\widehat{Y}^\omega_L =  L^2(\Lambda_L)^k \times L^2_{\mathrm{av}}(\Lambda_L)^d$, and notation $y_L=(p_L,\varphi_L)$,
\item energy functional $\widehat{\E}_L^\omega(\cdot;F): [0,T] \times \widehat{Y}_L^\omega \to \R$
\begin{equation*}
  \widehat{\E}^\omega_L(t, y_L;F) \colonequals  \frac{1}{L^{d}}\sum_{z\in\Lambda_L}\Big(\frac12 A(\tau_z \omega)\binom{\nabla_s \varphi_L}{p_L}\cdot \binom{\nabla_s\varphi_L}{p_L}-A(\tau_z \omega)\binom{\nabla_s \varphi_L}{p_L}\cdot \binom{P_S(F(t))}{0}\Big),
\end{equation*}
\item dissipation functional $\widehat{\rcal}^\omega_L: \widehat{Y}_L^\omega\to [0,\infty]$
\begin{equation*}
  \widehat{\rcal}^\omega_L(\dot y_L) \colonequals \tfrac{1}{L^d}\sum_{z\in\Lambda_L}\rho(\tau_z \omega,\dot{p}_L(z)),
\end{equation*}
\end{itemize}
and we define for a.a.~$t\in[0,T]$
\begin{equation}
\label{eq:rve_pi_operator}
  \mathcal W_{L}^\omega[F](t)\colonequals (P_{s}^*\circ\pi_k)\Bigg(\frac{1}{L^d}\sum_{z\in\Lambda_L}A(\tau_z\omega)
  \binom{P_s(F(t))+\nabla_s\hat\varphi_L(t,z)}{\hat p_L(t,z)}\Bigg),
\end{equation}
where $\hat y_L^\omega=(\hat\varphi_L,\hat p_L)\in W^{1,1}((0,T);\widehat Y_L^\omega)$ denotes the energetic solution of $(\widehat Y_L^\omega,\widehat{\mathcal E}_L^\omega(\cdot;F),\widehat{\rcal}_L^\omega)$ with $\hat y^\omega_L(0)=0$,
$\pi_k:\R^{2k}\to\R^k$ denotes the projection of a vector $(a_1,\ldots, a_{2k})$ to its first $k$-coordinates $(a_1,\ldots, a_k)$, and $P_s^*$ denotes the inverse operator that maps a strain matrix to a strain vector.

\begin{lemma}[RVE approximation as generalized Prandtl--Ishlinskii model]\label{L:hyst_hom}
  Consider assumptions \ref{B0:pl:d}--\ref{B2:pl:d}, \ref{item:assumption_measurability}--\ref{item:assumption_ergodicity}, and $l\in W^{1,1}((0,T);L^2(Q)^d)$. Additionally assume that initial state is trivial,
  \begin{equation*}
    y_{L}^0=0
    \qquad \text{and} \qquad
    l(0,\cdot)=0.
  \end{equation*}
  Then the $u_{L}$-component of the energetic solution $y_L=(u_L,p_L,\varphi_L)$ of the ERIS $(Y^\omega_L,\E^\omega_L,\rcal^\omega_L)$ satisfies for a.a.~$t\in[0,T]$ the constitutive equation
  \begin{equation*}
    -\nabla\cdot\sigma_{L}^\omega(t,x)=l(t,x)\qquad\text{in a distributional sense in }Q,
  \end{equation*}
  where the stress tensor $\sigma_{L}^\omega:[0,T]\times Q\to\R^{d\times d}_{\rm sym}$ is given by the relation
  \begin{equation*}
    \sigma_{L}^\omega(t,x)=\mathcal W_{L}^\omega\big[\mathrm{sym}\nabla u_{L}(\cdot,x)\big](t).
  \end{equation*}
\end{lemma}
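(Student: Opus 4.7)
My plan is to follow exactly the same strategy as in Theorem~\ref{T:hyst_hom} (which is the $L=\infty$ analogue), splitting the variational inclusion defining the energetic solution into an equation obtained by varying $u_L$ (which gives the force balance) and a set of equations obtained by varying the internal variables $(p_L,\varphi_L)$ (which, after a pointwise-in-$x$ reduction, identify $\mathcal W_L^\omega$ as the constitutive operator). Since $u_L$ does not enter the dissipation $\rcal_L^\omega$, it is the ``elastic'' variable and all hysteretic behavior is concentrated in $(p_L,\varphi_L)$.

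First, from the fact that $y_L=(u_L,p_L,\varphi_L)\in W^{1,1}((0,T);Y_L^\omega)$ is an energetic solution of a quadratic ERIS, the force balance $0\in \partial \rcal_L^\omega(\dot y_L)+D_y\E_L^\omega(t,y_L)$ holds for a.a.~$t$. Testing in the direction of an arbitrary $v\in H^1_0(Q)^d\subset Y_L^\omega$ (with zero components in the $p$- and $\varphi$-slots), the dissipation term vanishes and one is left with $D_u\E_L^\omega(t,y_L)[v]=0$. Computing this Gateaux derivative directly from the definition of $\E_L^\omega$ and using $\nabla_s u_L(x)\cdot P_s(\nabla v(x)) = P_s(\nabla u_L(x))\cdot P_s(\nabla v(x))$ (since $P_s$ only depends on the symmetric part) one obtains
\begin{equation*}
  \int_Q \sigma_L^\omega(t,x)\cdot\mathrm{sym}\nabla v(x)\,dx=\int_Q l(t,x)\cdot v(x)\,dx,
\end{equation*}
with $\sigma_L^\omega(t,x):=(P_s^*\circ \pi_k)\Big(\tfrac1{L^d}\sum_{z\in\Lambda_L}A(\tau_z\omega)\binom{P_s\nabla u_L(x)+\nabla_{s,z}\varphi_L(z,x)}{p_L(z,x)}\Big)$. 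This is exactly $-\nabla\cdot\sigma_L^\omega(t,\cdot)=l(t,\cdot)$ in the distributional sense, so it remains to show that the right-hand side agrees with the RVE Prandtl--Ishlinskii operator applied to $\mathrm{sym}\nabla u_L(\cdot,x)$.

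The key step is a pointwise-in-$x$ reduction. The energy $\E_L^\omega$ and the dissipation $\rcal_L^\omega$ both have the form $\int_Q F(\cdot,x)\,dx$ where the integrand only couples values of $(p_L,\varphi_L)$ at the same $x\in Q$ (the spatial coupling is only through $u_L$, which we now freeze). Varying $y_L$ in the $(p,\varphi)$-slot and using the pointwise characterization of the subdifferential of an integral functional of the form $\int_Q\rho(\cdot,\dot p(\cdot,x))\,dx$, the evolution inclusion decouples into a family of inclusions parameterized by $x\in Q$,
\begin{equation*}
  0\in \partial \widehat{\rcal}_L^\omega(\dot{\hat y}_L(t,x)) + D_{\hat y}\widehat{\E}_L^\omega\big(t,\hat y_L(t,x);F_x\big)\qquad\text{for a.e.~$x\in Q$, a.e.~$t$,}
\end{equation*}
with $F_x(t):=\mathrm{sym}\nabla u_L(t,x)$ and $\hat y_L(t,x):=(p_L(t,\cdot,x),\varphi_L(t,\cdot,x))\in\widehat Y_L^\omega$. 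The triviality of the initial data $y_L^0=0$ and $l(0,\cdot)=0$ gives $\mathrm{sym}\nabla u_L(0,x)=0$, so $F_x\in W^{1,1}_o$, and $\hat y_L(0,x)=0$. By Theorem~\ref{T:energetic_solution} applied to the quadratic ERIS $(\widehat Y_L^\omega,\widehat{\E}_L^\omega(\cdot;F_x),\widehat{\rcal}_L^\omega)$, this inclusion has a unique energetic solution, so $\hat y_L(t,x)=\hat y_L^\omega[F_x](t)$ for a.e.~$x\in Q$. Plugging this back into the formula for $\sigma_L^\omega(t,x)$ yields exactly the expression \eqref{eq:rve_pi_operator} defining $\mathcal W_L^\omega[\mathrm{sym}\nabla u_L(\cdot,x)](t)$, as required.

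I expect the main obstacle to lie in making the pointwise decoupling rigorous: the variational inclusion for $y_L$ lives in the Bochner-type space $W^{1,1}((0,T);Y_L^\omega)$, whereas the family of local inclusions lives in $W^{1,1}((0,T);\widehat Y_L^\omega)$ pointwise in $x\in Q$, so one needs a Fubini-type argument together with a measurable selection to identify the $x$-parametrized solutions with slices of the global solution. This is essentially the same trick as in the proof of Theorem~\ref{T:hyst_hom} (replacing $\Omega$-integrals by normalized sums over $\Lambda_L$), and can be made clean by using the equivalent energetic formulation (stability plus energy balance): the global stability and energy balance for $y_L$ integrate over $x\in Q$ pointwise stability and energy balance for $\hat y_L(\cdot,x)$, which together with the initial condition characterize $\hat y_L(\cdot,x)$ uniquely.
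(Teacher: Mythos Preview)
Your proposal is correct and follows exactly the route the paper intends: the paper explicitly states that the proof of Lemma~\ref{L:hyst_hom} is analogous to that of Theorem~\ref{T:hyst_hom} and omits it, and your argument (split the force balance into the $u_L$-variation giving $-\nabla\cdot\sigma_L^\omega=l$ and the $(p_L,\varphi_L)$-variation giving a family of local ERIS in $\widehat Y_L^\omega$ parametrized by $x\in Q$, then invoke uniqueness) is precisely that analogue with expectations replaced by normalized sums over $\Lambda_L$.
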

The proof is analogous to the proof of Theorem~\ref{T:hyst_hom} and thus omitted here.

The next theorem proves convergence of the approximate operator $\mathcal W_L^\omega$:

\begin{theorem}\label{T:RVE-W}
  Assume \ref{B0:pl:d}--\ref{B2:pl:d}, \ref{item:assumption_measurability}--\ref{item:assumption_ergodicity}. Then for any trajectory $F\in W^{1,1}_o$ and a.a.~$t\in[0,T]$ we have
  \begin{equation*}
    \lim\limits_{L\to\infty}\mathbb E\big[|\mathcal W^\omega_L[F](t)-\mathcal W_{\hom}[F](t)|^2\big]=0.
  \end{equation*}
\end{theorem}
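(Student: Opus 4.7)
The plan is to reduce the theorem to evolutionary $\Gamma$-convergence of a reduced mean RVE ERIS and then exploit the resulting strong two-scale convergence of the solutions, together with the ergodic theorem, to control the spatial average that defines $\mathcal{W}_L^\omega[F](t)$. In analogy with Section~\ref{S:RVE_conv}, I would first introduce the mean counterpart $(\widehat Y_L,\widehat{\mathcal E}_L(\cdot;F),\widehat{\rcal}_L)$ of the quenched hysteresis ERIS, obtained by taking expectations of the functionals. As in Remark~\ref{R:reltoquenched}, its mean energetic solution $\hat y_L=(\hat p_L,\hat\varphi_L)$ with trivial initial state satisfies $\hat y_L(\omega,\cdot)=\hat y_L^\omega(\cdot)$ for $P$-a.e.\ $\omega$. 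Adapting the proof of Theorem~\ref{thm:1462:d} to this reduced state space (no macroscopic displacement) and to the time-dependent loading $\ell_F\in W^{1,1}((0,T);\widehat Y_\hom^*)$, one obtains, for a.a.\ $t\in[0,T]$, the strong two-scale convergences
\begin{equation*}
  \mathcal{T}_L\hat p_L(t)\to \hat p_\hom(t)\quad\text{and}\quad \mathcal{T}_L\nabla_{s,z}\hat\varphi_L(t)\to\hat\chi_{s,\hom}(t)\quad\text{in }L^2(\Omega\times\Lambda).
\end{equation*}

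Abbreviating $V_L(\omega,t,z)\colonequals\binom{P_s(F(t))+\nabla_{s,z}\hat\varphi_L(\omega,t,z)}{\hat p_L(\omega,t,z)}$ and $V_\hom(\omega,t)\colonequals\binom{P_s(F(t))+\hat\chi_{s,\hom}(\omega,t)}{\hat p_\hom(\omega,t)}$, and using that $P_s^*\circ\pi_k$ is a bounded linear operator, it suffices to prove
\begin{equation*}
  X_L^\omega\colonequals L^{-d}\!\!\sum_{z\in\Lambda_L}A(\tau_z\omega)V_L(\omega,t,z)\ \longrightarrow\ \mathbb{E}[AV_\hom(\cdot,t)]\colonequals X_\hom\quad\text{in }L^2(\Omega;\R^{2k}).
\end{equation*}
Decompose $X_L^\omega-X_\hom=I_L+I\!I_L$ with $I_L\colonequals L^{-d}\sum_z A(\tau_z\omega)[V_L(\omega,t,z)-V_\hom(\tau_z\omega,t)]$ and $I\!I_L\colonequals L^{-d}\sum_z(AV_\hom)(\tau_z\omega,t)-\mathbb{E}[AV_\hom]$. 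For $I_L$, Jensen's inequality and~\ref{B1:pl:d} yield $\mathbb{E}[|I_L|^2]\leq\|A\|_{L^\infty}^2\|V_L-\widetilde V_\hom\|^2_{L^2(\Omega\times\Lambda_L)}$, where $\widetilde V_\hom(\omega,z)\colonequals V_\hom(\tau_z\omega,t)$ is the stationary extension of $V_\hom$. A direct computation from \eqref{eq:TL} yields $\mathcal{T}_L\widetilde V_\hom(\omega,\lambda)=V_\hom(\omega,t)$ (constant in $\lambda$); hence, by the isometry property of $\mathcal{T}_L$, this norm equals $\|\mathcal{T}_L V_L-V_\hom\|^2_{L^2(\Omega\times\Lambda)}$, which vanishes in the limit by the first step. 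For $I\!I_L$, observe that $AV_\hom(\cdot,t)\in L^2(\Omega;\R^{2k})$ is a stationary random field, so Theorem~\ref{T:ergodic} gives $I\!I_L\to 0$ in $L^2(\Omega)$.

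The main technical obstacle is the evolutionary $\Gamma$-convergence claim in the first step. Although the reduced ERIS is structurally simpler than the one treated in Theorem~\ref{thm:1462:d} (the macroscopic displacement and the associated Korn/Poincar\'e machinery are absent), one still has to redo the liminf inequality and the recovery-sequence construction in the $\mathcal{T}_L$-based two-scale topology and to verify stability uniformly in $L$ along with convergence of the energies. The time dependence introduced by $F$ is benign, since it enters only linearly through $\ell_F$. The subsequent estimates are essentially algebraic; in particular, the identity $\mathcal{T}_L\widetilde V_\hom=V_\hom$ is what enables the $\mathcal{T}_L$-isometry to absorb the otherwise problematic $\tau_z$-shifts coming from $A(\tau_z\omega)$, and it is this identity together with the ergodic theorem applied to $AV_\hom$ that decouples the systematic two-scale convergence of the solutions from the random fluctuations of the averaged coefficient.
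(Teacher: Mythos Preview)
Your proposal is correct and follows essentially the same route as the paper's proof: both introduce the mean version of the reduced ERIS, invoke the argument of Theorem~\ref{thm:1462:d} (adapted to the state space without the macroscopic displacement) to obtain strong two-scale convergence of the internal variables, and then split the averaged stress into a term controlled by the isometry identity $\widehat{\mathcal T}_L\widetilde V_\hom=V_\hom$ for the stationary extension and a term handled by the ergodic theorem applied to $AV_\hom$. The only cosmetic difference is notational: the paper uses the operator $\widehat{\mathcal T}_L$ (no $Q$-variable) rather than $\mathcal T_L$, which you should also use here since the reduced ERIS lives on $\Omega\times\Lambda_L$ only.
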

The proof is given in Section~\ref{sec:proofs}.

%%% Local Variables:
%%% mode: latex
%%% TeX-master: "rve_plasticity"
%%% End:

\section{Computing numerical approximations}
\label{sec:numerical_approach}

The homogenized stress tensor $\sigma_{\hom}$, related to the hysteresis operator $\mathcal{W}_{\hom}$ by
\begin{equation*}
  \sigma_{\hom}(t)\colonequals \mathcal W_{\hom}[F](t),
  \qquad
  t\in[0,T],
  \quad
  F \in W^{1,1}_o,
\end{equation*}
describes the stress--strain relation of the homogenized system, and is thus
a central part of the mechanical model.
Since the underlying random heterogeneous network is statistically homogeneous,
the hysteresis operator $\mathcal W_{\hom}$
is the same for all macroscopic material points $x\in Q$.

In this section we introduce a numerical scheme for approximating the hysteresis operator
$\mathcal W_{\hom}[F]$ for a given trajectory of macroscopic strains $F\in W^{1,1}_o$.
To use the operator at a macroscopic point $x \in Q$ in a $\textup{FE}^2$-method, one would have
to consider the trajectory $F(t) = \operatorname{sym}\nabla u(t,x)$.

\subsection{Monte-Carlo sampling}

To approximate $\mathcal W_{\hom}[F]$ we need to numerically solve the random RVE problem $(\widehat Y_L^\omega,\widehat{\E}_L^\omega,\widehat{\rcal}_L^\omega)$,
and then compute~\eqref{eq:rve_pi_operator}.
The randomness is taken care of by Monte-Carlo sampling.  Select a sample size $M \in \N$,
and a set $\boldsymbol \omega$ of $M$ realizations $\omega_1,\ldots,\omega_M$ of the random material parameters.  These are sampled
with respect to the $M$-fold product of $P$, that is, $\omega_1,\ldots,\omega_M$ are independently sampled with $P$.
Then we define the approximate stress tensor
\begin{equation*}
  \sigma_{L}^{M,{\boldsymbol\omega}}(t)
  \colonequals
  \mathcal W_L^{M,\boldsymbol\omega}[F](t)
  \colonequals
  \frac1M\sum_{i=1}^M\mathcal W^{\omega_i}_L[F](t),\qquad t\in[0,T],
\end{equation*}
where $\boldsymbol{\omega}\colonequals(\omega_1,\ldots,\omega_M)$.
We note that $\sigma_{L}^{M,{\boldsymbol\omega}}$ is a time-dependent  random matrix
in $\R^{d\times d}_{\rm sym}$. If we use $\mathbb{E}_M$ to denote the expectation
with respect to the $M$-fold product of $P$, then Theorem~\ref{T:RVE-W} yields
\begin{equation*}
  \lim_{L\to\infty}\mathbb E_M\Big[\abs[\big]{\sigma_{L}^{M,{\boldsymbol\omega}}(t)-\sigma_{\hom}}^2\Big]
  =0
  \qquad\text{for all $M\in\N$ and a.a.\ $t\in[0,T]$}.
\end{equation*}
We remark that we have convergence already for $M=1$. However, as we shall see, the variance
of $\sigma_L^{M, \boldsymbol \omega}$ can be reduced by increasing $M$.

\subsection{Simulating realizations of the RVE problem}
\label{sec:simulating_rve_problem}

For each sample of the Monte-Carlo method, we need to approximate the stress tensor
\begin{equation*}
  \sigma_L^\omega(t)\colonequals \mathcal W^{\omega}_L[F](t),\qquad t\in[0,T]
\end{equation*}
for a fixed $\omega\in\Omega$ sampled with $P$.  We do this by numerical simulation
of the ERIS $(\widehat Y_L^\omega,\widehat{\E}_L^\omega(\cdot;F),\widehat{\rcal}_L^\omega)$
of Section~\ref{S:RVE-PI}.

As the first step we introduce a time discretization:
For $N\in\N$ we split the time interval $[0,T]$ into $N$ equal subintervals defined by the time steps $0=t_0 < t_1 < \cdots < t_N=T$.
Following the standard theory of ERIS~\cite{mielke2015rate}, we can then approximate
the evolution $\hat{y} : [0,T] \to \widehat{Y}_L^\omega$ by a sequence of minimization problems
\begin{equation}\label{eq:discretemin}
  \hat y^l \colonequals \argmin_{z \in \widehat Y_L^\omega} \mathcal J^l(z).
\end{equation}
The increment functional here is
\begin{align}\label{eq:discreteL}
  \mathcal{J}^l(z)
  \colonequals
  \widehat{\mathcal{E}}_L^\omega(t_l, z;F)+\widehat{\mathcal{R}}_L^\omega(z-\hat y^{l-1}),
\end{align}
energy $\widehat{\E}_L^\omega(\cdot;F)$ and dissipation $\widehat{\rcal}_L^\omega$ are defined in Section~\ref{S:RVE-PI},
and we start from the trivial initial value $\hat y(t_0) = 0$.
We associate with $\hat y^0,\ldots, \hat y^N$ its piecewise affine interpolation $\hat y_N:[0,T]\to Y_L^\omega$ and note that $\hat y_N$ is Lipschitz continuous in time. As shown in \cite{mielke2005evolution} it strongly converges to the energetic solution to $(\widehat Y_L^\omega,\widehat{\E}_L^\omega(\cdot;F),\widehat{\rcal}_L^\omega)$ as $N\to\infty$.

In view of the definition of $\mathcal W^\omega_L[F]$ given in~\eqref{eq:rve_pi_operator}, we may now consider the approximate stress tensor
\begin{equation*}
  \sigma_{N,L}^{\omega}(t)\colonequals (P_{s}^{-1}\circ\pi_k)\Bigg(\frac{1}{L^d}\sum_{z\in\Lambda_L}A(\tau_z\omega)
  \binom{P_s(F(t))+\nabla_s \hat\varphi_N(t,z)}{\hat p_N(t,z)}\Bigg),\qquad \hat y_N=(\hat p_N,\hat \varphi_N),
\end{equation*}
and we conclude from the convergence of $\hat y_N$ that $\sigma_{N,L}^{\omega}(t)\to \sigma_L^\omega(t)$ as $N\to\infty$ for $P$-a.e.~$\omega\in\Omega$ and a.a.~$t\in[0,T]$.
In summary we end up with the approximation
\begin{equation}\label{def:sigmaproxy}
  \sigma_{N,L}^{M,\boldsymbol{\omega}}(t)\colonequals \frac1M\sum_{i=1}^M\sigma_{N,L}^{\omega_i}(t),\qquad \lim_{L\to\infty}\lim_{N\to\infty}\sigma_{N,L}^{M,\boldsymbol{\omega}}(t)=\sigma_{\hom}(t)\text{ for all $M\in\N$},
\end{equation}
where the convergence holds in probability for a.a.~$t\in[0,T]$.

It remains to explain how to numerically solve the increment problem~\eqref{eq:discreteL}.
First observe that the domain of $\mathcal J^l$ is finite dimensional. In fact, it is the product
of the two finite-dimensional spaces $L^2(\Lambda_L)^k$ (the space of microscopic plastic strains)
and $L^2(\Lambda_L)^d$ (the space of microscopic displacements).
These spaces have dimensions $n=k\cdot L^d$ and $m=d\cdot L^d$, respectively. We may therefore
identify any $\hat y \in Y^\omega_L$ with a vector $y\in\R^{n+m}$.
The increment functional $\mathcal J^l$ defined in~\eqref{eq:discreteL}
consists of the two parts
\begin{equation*}
  \mathcal E^l(y)
  \colonequals \widehat{\E}^\omega_L(t_l,\hat y;F),
  \qquad \text \qquad
  \mathcal R(y)\colonequals \widehat{\rcal}^\omega_L(\hat y).
\end{equation*}
It is thus the sum of the twice continuously differentiable,
convex functional $\mathcal E^l$, and the convex, lower-semicontinuous, proper functional $\rcal$.
\todosander{Post-Preprint: Ich finde hier ist noch nicht so ganz klar, weshalb
die Funktionale diese Eigenschaften haben. Oder sind sie für das allgemeine Modell
womöglich nur Annahmen?\todoneukamm{Ich habe das nochmal umgeschrieben.}}
Note that the $\hat\varphi$-component of any state  $\hat y=(\hat p,\hat\varphi)\in \widehat Y^\omega_L$ satisfies the condition $\sum_{z\in\Lambda_L}\hat\varphi(z)=0$, which removes constant displacmeent fields from the state space.
Therefore, the functional $\mathcal J^l$ is coercive.
In the implementation we realize this condition by clamping the network at the four corners of the RVE (which is consistent with
the periodic boundary conditions).
Finally, the nonsmooth dissipation functional is block-separable, i.e., it can be written
as a sum
\begin{equation*}
 \rcal(y) = \sum_{i=1}^{L^d} \rcal_i(y_i).
\end{equation*}
Here, $y_i$ denotes the degrees of freedom associated to the $\hat p$-component of the $i$-th node in the
RVE spring network, and $\rcal_i : \R^k \to \R$ is the dissipation of these
degrees of freedom.
\todosander{Post-Preprint: $\rcal_i$ konkret hinschreiben. Hängt es nur von $p$ ab,
oder auch von $\varphi$?}

\begin{remark}
 The mathematical structure of the increment problem is closely related to finite element discretizations
 of continuum small-strain elastoplastic models~\cite{sander2020plasticity}.
 In fact one can interpret the deterministic spring network model as a finite element system defined
 on a one-dimensional network grid using first-order Lagrange finite elements for the micro-displacements
 and piecewise constant functions for the plastic strains.
\end{remark}

We solve the minimization problem~\eqref{eq:discretemin} using the Truncated Nonsmooth Newton Multigrid (TNNMG)
method presented in \cite{graeser2019tnnmg}.
The TNNMG method is a robust and efficient solver for convex block-separable minimization
problems.  It alternates a nonlinear block Gauß--Seidel iteration with a linear correction step.
As the two-dimensional spring networks considered in this work are not very large we use
an exact Newton step computed by a sparse direct solver for the linear corrections.
\citeauthor{graeser2019tnnmg} showed that the TNNMG method converges for the functionals
considered here for any starting iterate~\cite[Corollary~4.4]{graeser2019tnnmg}.
In practice, efficient performance has been observed for a variety of problems,
including in particular small-strain continuum elastoplastic problems~\cite{sander2020plasticity}
and brittle-fracture phase-field models~\cite{graeser_sander:2020}.

% \subsection{Application to elastoplastic spring networks}\label{S:num_specific}

% \todosander{Ich finde diese Überschrift etwas unglücklich.  Um elastoplastic spring networks
% geht es doch die ganze Zeit, selbst bei den allgemeineren Modellen.
% Mir fällt aber aktuell auch nichts besseres ein.}
We finally turn to the specific case of the elastoplastic spring network of Section~\ref{S:random-network}.
There, the random elastic modulus~$a$,
hardening parameter $h$ and yield stress $\sigma_{\rm yield}$ associated with an edge
\begin{equation*}
  \mathsf e=(x,x+e_i),\qquad i=1,\ldots,k,\,x\in\Z^d,
\end{equation*}
(with $e_1,\ldots,e_k$ denoting the generating edges in $\mathsf E_0$) are of the form
\begin{equation*}
  a(\omega, \mathsf e)=a_i(\tau_{x}\omega),
  \qquad
  h(\omega,\mathsf e)=h_i(\tau_x\omega),
  \qquad
  \sigma_{\rm yield}(\omega,\mathsf e)=\sigma_{{\rm yield},i}(\tau_x\omega),
\end{equation*}
where $a_1,\ldots,a_k,h_1,\ldots,h_k,\sigma_{{\rm yield},1},\ldots,\sigma_{{\rm yield},k}:\Omega\to\R$ are measurable. Furthermore, there exists an ellipticity constant $\lambda>0$ such that
\begin{equation}\label{ass:ellipt}
  \lambda\leq a_1,\ldots,a_k,h_1,\ldots,h_k\leq\tfrac1\lambda
  \qquad \text{and} \qquad
  0\leq\sigma_{{\rm yield},1},\ldots,\sigma_{{\rm yield},k}\leq\tfrac1\lambda\qquad\text{$P$-a.s.}
\end{equation}
We can then go back to an edge-based representation of the energy and dissipation functionals
\begin{align*}
  \widehat{\mathcal E}^\omega_L(t_l,\hat y_L; F)
  & \colonequals
  \frac{L^{-d}}2\sum_{\mathsf e\in\mathsf E\cap\Lambda_L}a(\omega,\mathsf e)\big(\nabla_s\hat\varphi_L(\mathsf e)-\hat p_L(\mathsf e)\big)^2+h(\omega,\mathsf e)\hat p_L^2(\mathsf e)\\
  & \quad
  -L^{-d}\sum_{\mathsf e\in \mathsf E\cap \Lambda_L}a(\omega,\mathsf e)\nabla_s\hat \varphi_L(\mathsf e)\cdot F(t_l,\mathsf e), \\
\intertext{
where $F(t,\mathsf e)\colonequals \frac{\overline{\mathsf e}-\underline{\mathsf e}}{|\overline{\mathsf e}-\underline{\mathsf e}|}\cdot F(t)\frac{\overline{\mathsf e}-\underline{\mathsf e}}{|\overline{\mathsf e}-\underline{\mathsf e}|}$, and}
  \widehat{\rcal}^\omega_L(\dot{\hat y}_L)
  & \colonequals
  L^{-d}\sum_{\mathsf e\in\mathsf E\cap\Lambda_L}\sigma_{\rm yield}(\omega,\mathsf e) \abs[\big]{\dot{\hat p}_L(\mathsf e)}.
\end{align*}

The convexity and coercivity properties of $\mathcal E_l$ and $\rcal$ postulated
in Section~\ref{sec:simulating_rve_problem} follow here from the properties in~\eqref{ass:ellipt}.
Also, $\rcal$ is now block-separable with blocks of size~1. This implies that the
Gauß--Seidel iteration that is part of the TNNMG method now has to solve
sequences of minimization problems in only one variable, which simplifies the implementations.

For an implementation we may reformulate $\mathcal E_l:\R^{n+m}\to\R$ as
\begin{equation*}
  \mathcal E_l(y)=L^{-2}\big(\tfrac12 y\cdot\mathbf A(\omega)y-\mathbf f_l(\omega)\cdot y\big)
\end{equation*}
for a matrix $\mathbf A(\omega)\in\R^{(n+m)\times(n+m)}_{\rm sym}$ and
a vector $\mathbf f_l(\omega)\in\R^{n+m}$, where $\mathbf A(\omega)$ and $\mathbf f_l$
depend on the coefficients $\{a_i(\tau_z\omega)$, $h_i(\tau_z\omega)\,:\,i=1,\dots, k \text{ and $z\in\Lambda_L$}\}$,
and $\mathbf f_l$ additionally depends on the load $F(t_l)$.
Likewise, we may write $\mathcal R:\R^{n+m}\to\R$ in the form
\begin{equation*}
  \mathcal R(y)=L^{-2}\mathbf r(\omega)\cdot
  \begin{pmatrix}
    |y_1|\\
    \vdots\\
    |y_n|
  \end{pmatrix},
\end{equation*}
for a vector $\mathbf r(\omega)\in\R^n$ that only depends on the coefficients $\big\{\sigma_{{\rm yield},i}(\tau_z\omega)\,:\,i=1,\dots,k,\,z\in\Lambda_L\big\}$.
% For the convenience of the reader we define $\mathbf A,\mathbf f_l,\mathbf r$ explicitly in Section~\ref{} in the appendix.
% \todoneukamm{@Patrick: Bitte die explizite Definition von $\mathbf A,\mathbf f_l,\mathbf r$ im Appendix angeben.}

%%% Local Variables:
%%% mode: latex
%%% TeX-master: "rve_plasticity"
%%% End:

\section{Numerical experiments}\label{S:exp}
In this section, we numerically explore the behavior of the homogenized stress tensor
\begin{equation*}
  \sigma_{\hom}(t)=\mathcal W_{\hom}[F](t)
\end{equation*}
at a macroscopic point $x \in Q$ for different loading trajectories $F:[0,T]\to\R^{2\times 2}_{\rm sym}$,
using the numerical approach presented in Section~\ref{sec:numerical_approach}.
We consider the triangular, two-dimensional spring network with edges $e_1 = (1,0)$, $e_2 = (0,1)$,
$e_3 = (1,1)$ shown in Figure~\ref{fig:example_network},
with independent and identically distributed (i.i.d.) material parameters for the springs.
All material parameters are sampled from uniform distributions:
\begin{equation}\label{tbl:randomlaws}
  a(\mathsf e) \sim U\big(1\cdot 10^6,2\cdot 10^6\big),\qquad h(\mathsf e)\sim U\big(1.25\cdot 10^6,2\cdot 10^6\big),\qquad \sigma_{\rm yield}(\mathsf e)\sim U\big(0.9\cdot 10^3,1.1\cdot 10^3\big).
\end{equation}
%
% \begin{table}[h]
%   \centering
%   \begin{tabular}{l|ccc}
%     $\mathsf e$ & $a(\mathsf e)$&$h(\mathsf e)$&$\sigma_{\rm yield}(\mathsf e)$\\
%     \hline
%     $e_1$ (horizontal) & $\sim U(1,3)$ & $\sim U(1,3)$&$\sim U(1,3)$\\
%     $e_2$ (vertical)   & $\sim U(1,3)$ & $\sim U(1,3)$&$\sim U(1,3)$\\
%     $e_3$ (diagonal)   & $\sim U(1,3)$ & $\sim U(1,3)$&$\sim U(1,3)$
%   \end{tabular}
%   \caption{Distribution of the material parameters. The uniform distribution on the interval
%   $(\alpha,\beta)$ is denoted by $U(\alpha,\beta)$.}
%   \label{tbl:randomlaws}
%   \todoneukamm{Hier die Verteilung der Simulation eintragen.}
% \end{table}
%
The goal of the numerical study is two-fold: On the one hand, we are interested
in properties of $\sigma_{\hom}$ itself, e.g., its behavior in the case
of cyclic loading (see Section~\ref{S:cycle}). However, in Section~\ref{S:error}
we also investigate the convergence rate of the RVE approximation $\sigma_{N,L}^M(t)$
as $L\to\infty$. To understand the latter is crucial for determining the size $L$
of the RVE required to reach an acceptable approximation error. The choice of the
size of the RVE is intensively discussed in the computational mechanics community,
see, e.g.~\cite{drugan1996micromechanics, schneider2022representative}.
Nevertheless, analytical results are only known for elliptic systems and equations
(including the case of linear elasticity) \cite{GO11,GO12, gloria2015quantification, BellaOtto16,fischer2019optimal, GNO5}.
In particular, we mention \cite{gloria2015quantification} where estimates with
optimal scaling in $L$ and $M$ are obtained in the case of a discrete elliptic
equation with i.i.d. coefficients. In the case of elastoplasticity no analytical results
are available and it is unclear, if the methods developed for  elliptic equations
can be extended to the case of  elastoplasticity. In view of this,
in Section~\ref{S:error} we numerically study the behavior of the error
with respect to~$L$ in the case of monotonic loading.
We observe that it matches analytical results for linear elasticity.
This is surprising: The scaling for the linearly elastic case is based on elliptic regularity for the displacement field and requires the material properties to feature a rapid decay of spatial correlations (see in particular the large-scale regularity theory developed in \cite{gloria2020regularity}). In comparison, in elastoplasticity, no elliptic regularity theory is available for the plastic strain and it is not clear at all whether the plastically deformed material satisfies the required spatial decorrelation properties.

For the presentation of our numerical results we shall visualize stresses
not by means of the stress tensor $\sigma\in\R^{2\times 2}_{\rm sym}$,
but as the associated stress vector $s=P_s^*\sigma \in \R^{|\mathsf E_0|}$.
Recall from Example~\ref{ex:stress_vector} that
for the triangular, two-dimensional lattice, the relation between both quantities
is given by the formula
\begin{equation*}
  \sigma=
  \begin{pmatrix}
    s_1+\frac12 s_3 &\frac12 s_3\\
    \frac12 s_3&s_2+\frac12 s_3
  \end{pmatrix}.
\end{equation*}
Note that the first, second and third entry of $s$ describe the longitudinal stress in the direction of horizontal springs $(x,x+e_1)$, vertical springs $(x,x+e_2)$, and diagonal springs $(x,x+e_3)$, respectively.

\subsection{Uniaxial cyclic loading}\label{S:cycle}

As the first experiment, we consider the cyclic loading trajectory
\begin{equation*}
  F:[0,1]\to\R^{2\times 2}_{\rm sym},\qquad  F(t)=\begin{pmatrix}
    3\cdot10^{-3}\sin(8t) &0\\0&0\end{pmatrix}.
\end{equation*}
We approximate this on the time interval $[0,1]$,
which we split into $N = 50$ subintervals of equal size.  To simplify the notation
we always omit the dependence on $N$. The Monte-Carlo sampling
of the random network properties is done using $M = 5$ samples $\{\omega_i\}_{i=1}^5$
of the random coefficients.

Figure~\ref{fig:Hystereseloop} shows the stress--strain curve for the
RVE-approximation $\sigma_{L}^\omega(t)$ for four RVE sizes $L\in \cb{4, 6, 8, 40}$.
The five curves in each plot correspond to the five realizations of the
random coefficients. In the four plots the horizontal axis corresponds
to the $(1,1)$-component of the strain $F$, and the vertical axis corresponds
to the first component of the stress vector
$s_{L}^\omega(t) \colonequals P_s^{-1}(\sigma_{L}^\omega(t))$.
\begin{figure}
  \begin{minipage}[b]{0.5\linewidth}
    \centering
    \subfigure[$L = 4$]{
    \includegraphics[width=0.85\linewidth]{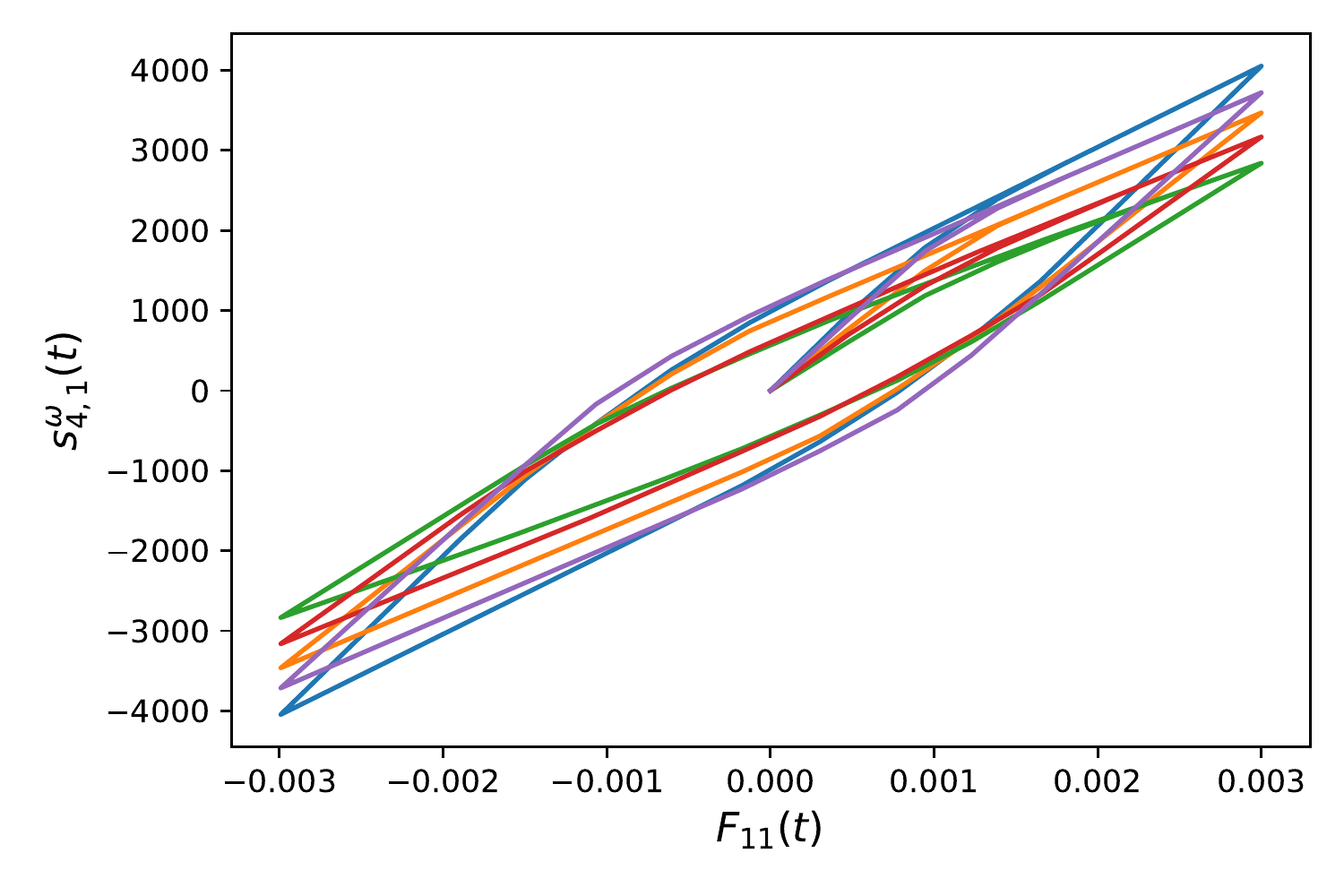}
    }
  \end{minipage}%%
  \begin{minipage}[b]{0.5\linewidth}
    \centering
    \subfigure[$L = 6$]{
    \includegraphics[width=0.85\linewidth]{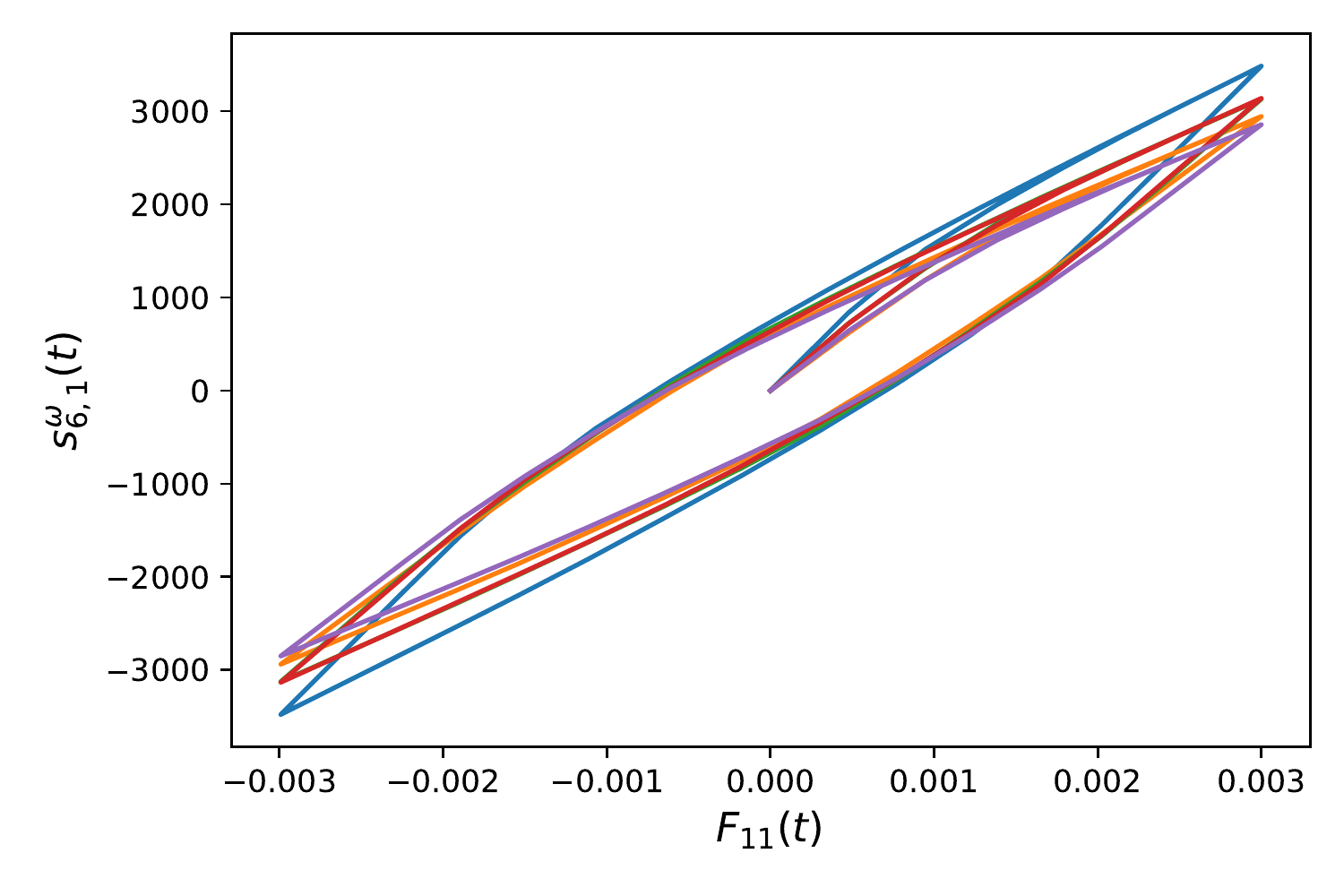}
    }
  \end{minipage} 
  \begin{minipage}[b]{0.5\linewidth}
    \centering
    \subfigure[$L = 8$]{
    \includegraphics[width=0.85\linewidth]{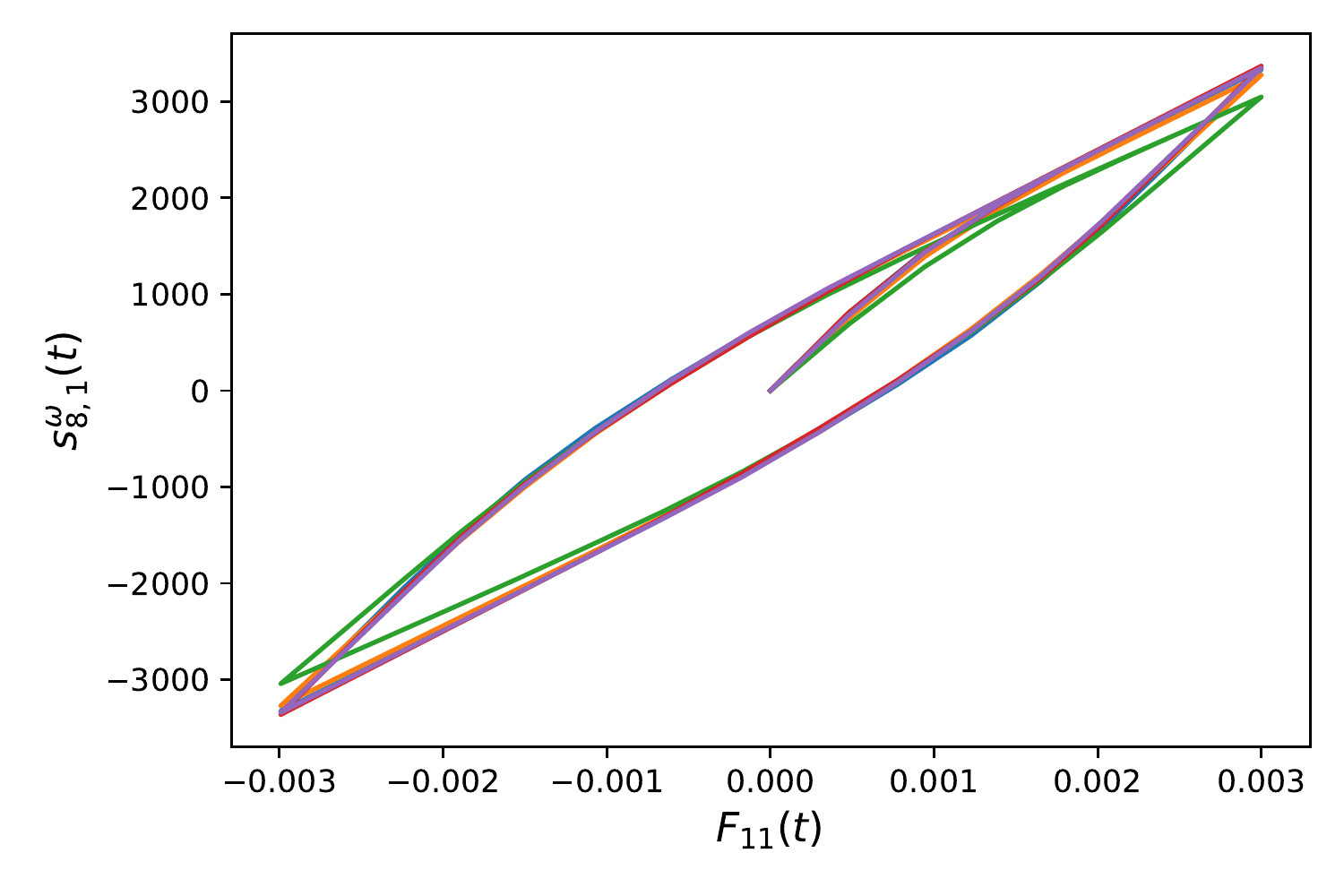}
    }
  \end{minipage}%% 
  \begin{minipage}[b]{0.5\linewidth}
    \centering
    \subfigure[$L = 40$]{
    \includegraphics[width=0.85\linewidth]{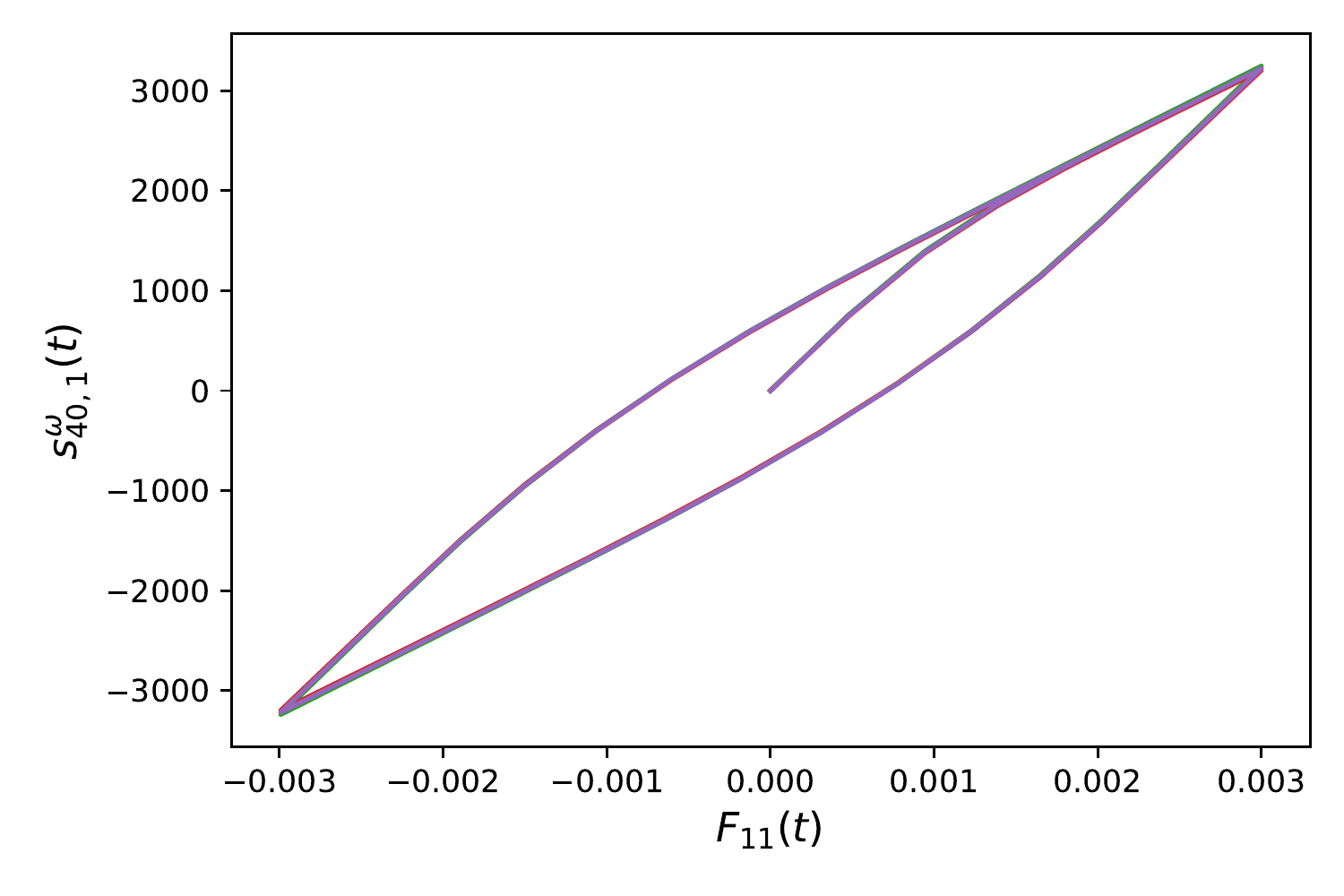}
    }
  \end{minipage} 
  \caption{Stress--strain plots for two-dimensional RVE approximations
   for a cyclic macroscopic deformation $F$. The different colors correspond to
   five different realizations $\{\omega_i\}_{i=1}^5$ of the coefficients.
   The first component of $s^{\omega_i}_{L}(t)$ is mapped as a function of $F_{11}(t),\ t\in[0,T]$.}
\label{fig:Hystereseloop}
\end{figure}

The diagrams indicate that in the limit $L\to\infty$ the stress--strain curves converge
to a single hysteresis curve that is independent of the choice of the random sample.
The limit curve is the one associated with the homogenized stress vector $\sigma_\text{hom}$.
This numerically confirms the convergence statement from Theorem~\ref{T:RVE-W}.

Furthermore, we observe a transition from linear to nonlinear hardening: A closer look
at the stress--strain curves for $L=4$ shows that these curves are piecewise linear
with a finite set of linear segments. This not surprising, since each individual spring
is governed by an elastoplastic material law with linear hardening, and thus
the stress--strain curve of a single spring along a monotone load path
consists of at most two linear segments. In fact, for each finite $L$, the stress--strain
curve will be composed of finitely many linear segments, the number increasing with $L$.
In this way, the nonlinear hardening behavior of the homogenized model
(which exhibits stress--strain curves with curved segments) is approximated by
the piecewise linear curves of the RVE approximation.

\subsection{Uniaxial monotonic loading}\label{S:monotone}

As the second example we consider the monotonic, uniaxial loading trajectory
\begin{equation}\label{eq:850}
  F:[0,1]\to\R^{2\times 2}_{\rm sym},\qquad F(t)=\begin{pmatrix}
    0.0034t &0\\0&0\end{pmatrix}
\end{equation}
on the time interval $[0,1]$, discretized by $N = 50$ time steps.
This can be interpreted as a uniaxial stretching of the system in the direction of
$e_1 = (1,0)$. The values are chosen such that initially only the horizontal
springs undergo plastic deformation. After all horizontal springs have started
to deform plastically, the diagonal springs start to leave the elastic regime.
The vertical springs stay elastic in the considered load range.

This time we simulate $M = 40$ different realizations
$\boldsymbol{\omega} = (\omega_1, \dots, \omega_{40})$ of the random coefficients,
again using the uniform distributions of \eqref{tbl:randomlaws}.
We focus on a single RVE size of $L = 30$. The Monte-Carlo approximation
of the stress vector is then
\begin{equation*}
  \sigma^{M,\boldsymbol{\omega}}_L(t)\colonequals\frac{1}{M}\sum_{i=1}^{M}\sigma_L^{\omega_i}(t).
\end{equation*}
We again visualize the associated stress vector, which we denote by
\begin{equation*}
 \overline{s}(t)
 \colonequals
 P_s^{-1}(\sigma^{M,\boldsymbol{\omega}}_L(t)).
\end{equation*}
  
In Figure~\ref{fig:SigmaHomTimeFirstThirdComp_stress} we visualize the first
and third component of the stress vector $\overline s(t)$ as a function of
the strain component $F_{11}$. We do not discuss the second component in detail, which
corresponds to stresses in the vertical direction---in view of the loading trajectory \eqref{eq:850}, which strains the material in the horizontal direction only, the stress in the vertical direction only plays a minor role for the mechanical response of the system, see Figure~\ref{fig:SigmaHomTimeSecondComp} and note that the stresses are much smaller than those for the horizontal and diagonal direction.

The two plots in Figure~\ref{fig:SigmaHomTimeFirstThirdComp_stress}
show that the stress--strain curves undergo three phases: A (seemingly) linear phase at the beginning, a nonlinear, monotone phase in the middle, and another (seemingly) linear phase (with a smaller slope) at the end. In the following we argue that these phases can be explained by considering the amount of plastic deformation accumulated in the system. To that end we introduce three regimes for the state of the system, namely,
\begin{itemize}
\item An \emph{elastic regime}, where none of the springs in the system is plastically deformed,
\item a \emph{transitional regime}, where some but not all of the springs in the system are plastically deformed,
\item a \emph{plastic regime}, where all of the springs in the system are plastically deformed.
\end{itemize}
In view of the anisotropic geometry of the underlying network, we introduce
these regimes for horizontal, vertical, and diagonal springs separately.
It is convenient to call springs to be of type $\alpha\in\{1,2,3\}$ if the they are
generated by edges $e_\alpha\in \mathsf E_0$. In particular, edges of types~$1$ and~$3$
correspond to horizontal and diagonal springs, respectively.
%
%
% % Specifically, the behaviour of the first component of $\overline{s}_{hom,L}$ is closely related to the plastic deformation of the horizontal springs, i.e., the first component of $p\lol$. 
% The study of the stress tensor is closely related to plastic deformation $p\lol$. To describe the behaviour, as we will show later, it will therefore be advantageous to divide the system into three regimes:
%
% In particular, we identify the above three different regimes based on the behaviour of the underlying plastic variable $p\lol$. The experiments are based on i.i.d. random coefficients with uniform distribution in suitable intervals. \eqref{eq:850} corresponds to an incrementally increasing macroscopic horizontal elongation of the medium and for this reason we are mostly interested in the horizontal response of the stress tensor, i.e., in its first component.
% \todovarga{Hier ist eigentlich noch nicht klar, dass die erste Komponente mit den horizontalen Federn korrespondiert, der Zusammenhang wird erst später aufgezeigt.}
%
For $\alpha=1,2,3$ we compute the arithmetic mean of the proportion of plastically deformed springs of type $\alpha$ in terms of the ratio
\begin{equation*}
  R_{\alpha}
  \colonequals
  \frac{\text{Number of all plastically deformed springs of type $\alpha$}}{\text{Number of springs of type $\alpha$}},
\end{equation*}
and say that the springs of type $\alpha$ of the system are in the 
\begin{equation*}
  \begin{cases}
    \text{elastic regime}&\text{if }  R_{\alpha}\approx 0,\\
    \text{plastic regime}&\text{if }  R_{\alpha}\approx 1,\\
    \text{transitional regime}&\text{else.}
  \end{cases}
\end{equation*}
% \todoneukamm{Is the above definition with the 1\% threshold correct?}
The ratios $R_\alpha$ for horizontal and diagonal springs are plotted
in Figure~\ref{fig:SigmaHomTimeFirstThirdComp_ratio}. Moreover, the type of the regime is indicated by the color of the  background of the plots.

Finally, in Figure~\ref{fig:SigmaHomTimeFirstThirdComp_slope} we plot the
numerical slope of the stress--strain curves, which is given by the quotient
\begin{equation*}
  \frac{\overline{s}_\alpha(t_k)-\overline{s}_\alpha(t_{k-1})}{F_{11}(t_k)-F_{11}(t_{k-1})},\qquad \alpha\in\{1,3\}
\end{equation*}
for each time step $t_k$, $k=1,\dots,N$.

\begin{figure}
     \centering
    \subfigure[Stress--strain curves. Left: $\overline{s}_1$, right: $\overline{s}_3$]{
    \includegraphics[width=0.9\linewidth]{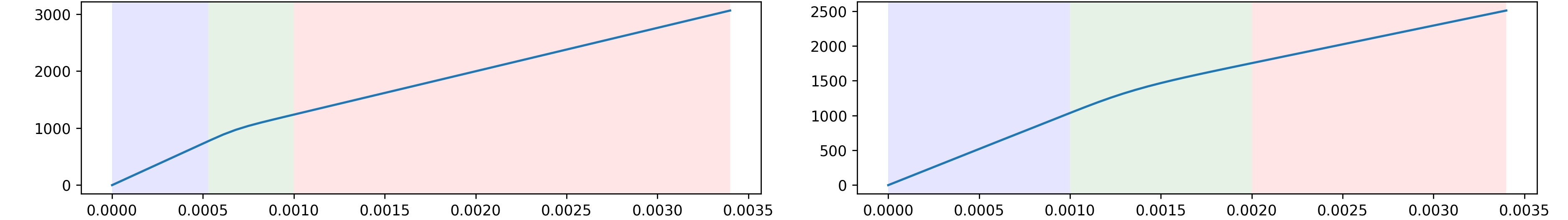}
    \label{fig:SigmaHomTimeFirstThirdComp_stress}
    }
    \subfigure[Fraction of plastically strained springs.
      Left: horizontal springs, right: diagonal springs]{
    \includegraphics[width=0.9\linewidth]{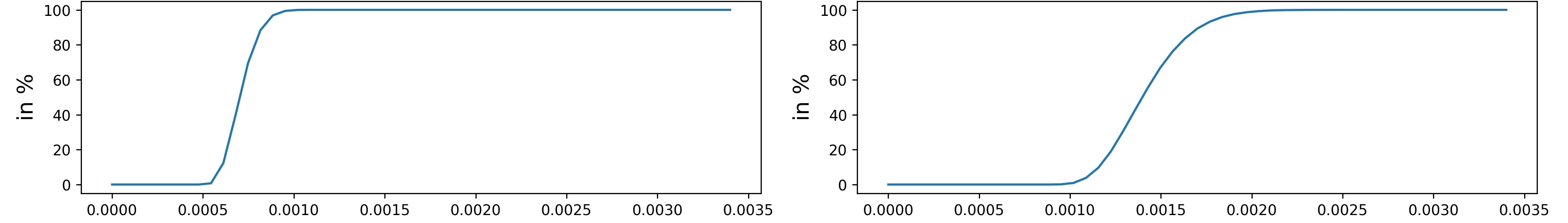}
    \label{fig:SigmaHomTimeFirstThirdComp_ratio}
    }
    \subfigure[Slope of the stress--strain curve. Left: $\overline{s}_1$, right: $\overline{s}_3$]{
    \includegraphics[width=0.9\linewidth]{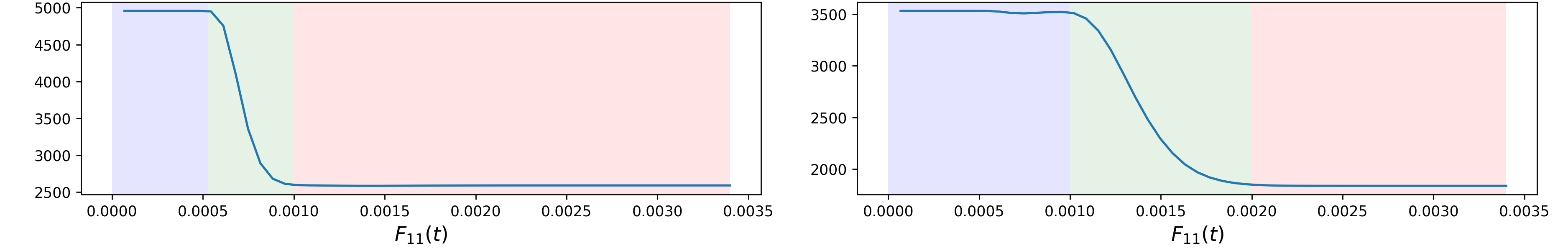}
    \label{fig:SigmaHomTimeFirstThirdComp_slope}
    }
    \caption{Stress response under monotonic load}
\label{fig:SigmaHomTimeFirstThirdComp}
\end{figure}

We interpret these results in the following way:
At the beginning of the loading experiment the plastic strain is zero and all springs
are in the elastic regime. In that regime the stress--strain curve must be linear.
At $F_{11}(t)\approx 0.5\cdot 10^{-3}$ the horizontal springs enter the transitional regime, while the diagonal (and vertical) springs still remain in the elastic regime.
In the (horizontal) transitional regime the slope of $\overline{s}_1$ drops sharply,
while in the third component $\overline{s}_3$ we only observe a small drop in the slope.
At $F_{11}(t)\sim 0.01$ all horizontal springs are deformed plastically
and a clear change in behavior of the first component can be recorded:
Its stress--strain relation seems again linear. However, a closer look at the slope
in the first component shows that linearity is only attained when all diagonal springs are plastically deformed as well.
\todosander{Können wir das noch irgendwie im Bild besser verdeutlichen,
z.B. durch einen farbigen Hintergrund?}

This can be seen from Figure~\ref{fig:ZoomSigmaHomTimeFirstComp}, which shows
a close-up view of the strain region $F_{11} \in [10^{-3}, 3.5 \cdot 10^{-3}]$.
There, all horizontal strings are plastic, but the diagonal springs are not.
The adapted stress range in Figure~\ref{fig:ZoomSigmaHomTimeFirstComp} shows that
while $\overline{s}_1$ may look linear, strictly speaking, it is not.

\begin{figure}
\centering
\includegraphics[width=0.4\linewidth]{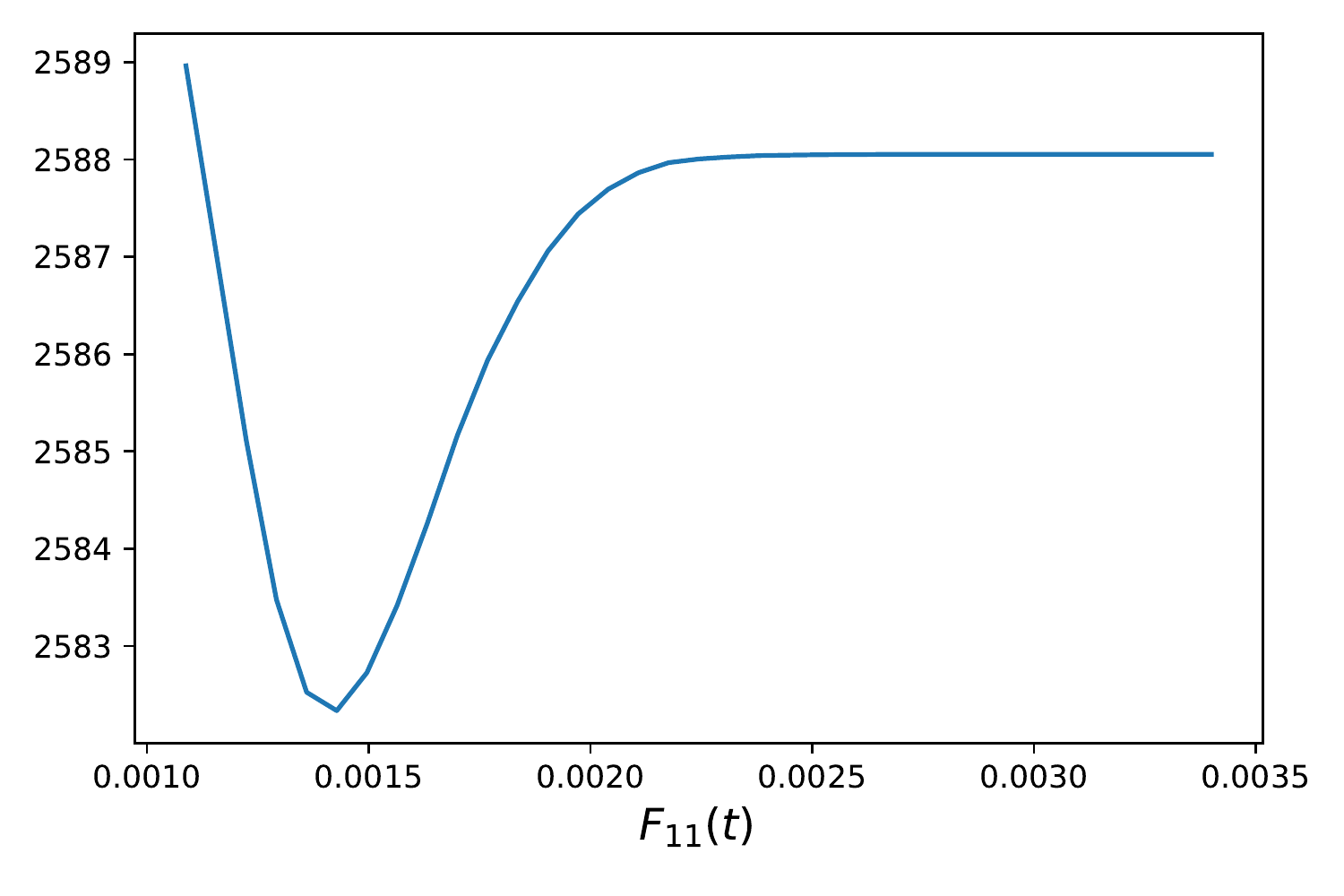}
\caption{Close-up of the slope of the first component of $\overline{s}$
         as a function of $F_{11}$}
\label{fig:ZoomSigmaHomTimeFirstComp}
\end{figure}

The third component of the stress vector features an analogous effect.
In particular, a rapid decrease of the slope of the third component of the
stress--strain curves can be observed, when the diagonal springs are in the
transitional regime. The third component
of the stress-vector starts with a linear increase, followed by oscillations during the horizontal and diagonal transitional regimes. When both, the horizontal and diagonal springs are in the plastic regime, the curve continuous with a linear increase.

\begin{figure}
\centering
\includegraphics[width=0.6\linewidth]{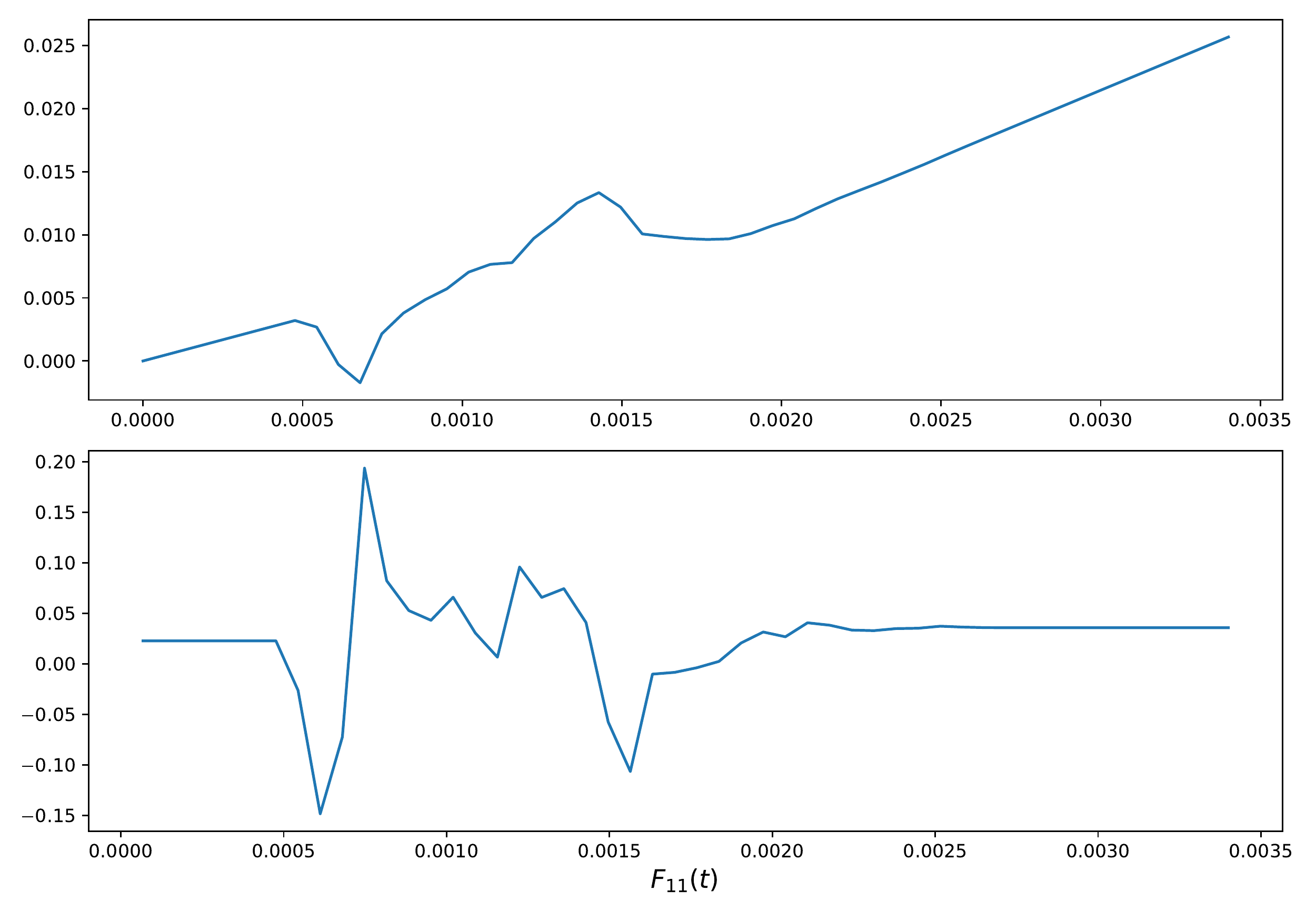}
\caption{(top) Second component of $\overline{s}(t)$ as a function of $F_{11}(t)$;
 (bottom) slope of the second component of $\overline{s}(t)$}
\label{fig:SigmaHomTimeSecondComp}
\end{figure}

\begin{remark}[Conclusion]\label{R:conclusion}
We may summarize our analysis of the numerical, uniaxial, monotonic loading experiment as follows:
\begin{itemize}
\item The stress-strain relation is affine as long as none of the springs is in the transitional regime. In our simulation this is the case for the two intervals approximately given by
  \begin{equation}\label{nontransitional}
    [0,0.0005],\ [0.002,0.034].
  \end{equation}
  The first interval corresponds to the case when all types of springs are in the elastic regime, and the second interval corresponds to the case when springs of type $1,2$, and $3$ are in the plastic, elastic and plastic regime, respectively.
\item If a spring of type $\alpha$ enters the transitional regime, the slope of the
  $\alpha$-component of the  stress response decreases rapidly. The other components are less affected,
  but show oscillations, as shown, e.g., in Figures~\ref{fig:ZoomSigmaHomTimeFirstComp} and \ref{fig:SigmaHomTimeSecondComp}. In our simulation this is the case for the intervals
  \begin{equation}\label{eqtransitional}
    [0.0005, 0.001],\ [0.001,0.002]
  \end{equation}
  where the first interval is dominant for the behavior of the first stress component, while the second interval is dominant for the third stress component.  
\end{itemize}
\end{remark}

\subsection{RVE approximation errors}\label{S:error}
In this section we numerically explore the error of the RVE approximation. We mainly focus
on the monotonic, uniaxial loading experiment of the previous section.

Recall that $\sigma_L^{M,\boldsymbol{\omega}}(t)$ with $\boldsymbol{\omega} = (\omega_1, \dots,\omega_M)$
is a random matrix defined on the $M$-fold product of $(\Omega,\mathcal F,P)$.
Also recall that this quantity depends on the number $N$ of time steps used to
discretize the time interval $[0,T]$. We continue to use the values $T = [0,1]$
and $N = 50$,
and to omit the $N$ from the notation for simplicity.
We denote the expectation associated to the $M$-fold product of $(\Omega,\mathcal F,P)$
by $\mathbb E_M$. A natural measure of the RVE error is then given by
\begin{equation*}
  \norm[\big]{\sigma_L^M(t)-\sigma_{\hom}(t)}
  \colonequals
  \Big(\mathbb E_M\big[\abs[\big]{\sigma_{L}^M(t)-\sigma_{\hom}(t)}^2\big]\Big)^\frac12.
\end{equation*}
This total error can be bounded by the sum of a random and a systematic part, i.e.,
\begin{equation}
\label{eq:total_error_bound}
  \norm[\big]{\sigma_L^M(t)-\sigma_{\hom}(t)}
  \leq
  \operatorname{Var}_M^\frac12\big(\sigma_L^M(t)\big)+ \abs[\big]{\mathbb E_M\big[\sigma_L^M(t)\big]-\sigma_{\hom}(t)}.
\end{equation}
Since $\sigma_L^M$ is defined as an arithmetic mean with respect to~$M$ independent samples
with distribution $P$, we have
\begin{equation*}
  \mathbb E_M\big[\sigma_L^M(t)\big]
  =
  \mathbb E\big[\sigma_L(t)\big],
\end{equation*}
and we see that the systematic error simplifies to
\begin{equation*}
    E_{{\rm sys}}(L;t)
\colonequals \abs[\big]{\mathbb E\big[\sigma_L(t)\big]-\sigma_{\hom}(t)}.
\end{equation*}
In particular, it is independent of the sample size $M$. On the other hand, for the random error we have
\begin{equation*}
  \operatorname{Var}_M^\frac12\big(\sigma_L^M(t)\big)
  \leq
  M^{-1}  E_{{\rm rand}}(L;t),\qquad   E_{{\rm rand}}(L;t)
\colonequals\operatorname{Var}^\frac12\big(\sigma_L(t)\big),
\end{equation*}
and we see that the random error can be reduced by increasing $M$. In view of this, it is important to analyze the scaling for the random and systematic error separately, since a difference in scaling will lead to a different choice $M$ when optimizing the overall computational cost.

\subsubsection{Analytic results for elastic systems}
\label{sec:analytical_error_bounds}

In the completely elastic regime, i.e., when $p(t)=0$, the stress $\sigma_L^M(t)$ is determined
by a lattice model of linear elasticity on a periodic RVE. This is a discrete elliptic system
with independent and identically distributed (i.i.d.) coefficients. Its mathematical structure
is close to the discrete elliptic equations with random coefficients whose correlations
decay rapidly as considered in \cite{gloria2015quantification}. There, it is shown that the
random error for the periodic RVE approximation (with $M=1$) scales like $L^{-\frac d2}$,
while the systematic error scales like $L^{-d}(\ln L)^d$. Since the scaling of
the systematic error is much quicker, the total error can be reduced by an appropriate choice
of $M$. In particular, for $M=L^d$ both errors have the same scaling w.r.t.~$L$ (up to a logarithmic correction).

Similar results have been obtained for continuum, linear elliptic systems in \cite{BellaOtto16},
and for monotone, uniformly elliptic systems in \cite{fischer2019optimal}.
Transferred to our situation we infer that in the elastic regime, the random error
can be estimated as
\begin{equation}\label{eq:RandErrorReference}
  E_{{\rm rand}}(L;t)\leq CL^{-\frac d2}|F(t)|,
\end{equation}
while the systematic error can be estimated as
\begin{equation}
  \label{eq:SystErrorReference}
  E_{{\rm sys}}(L;t)\leq CL^{-d}(\ln L)^d|F(t)|,
\end{equation}
with a constant $C$ independent of the loading trajectory $t\mapsto F(t)$. 

We finally remark that in \cite{GNO5} linear elliptic systems with correlated coefficients
with a slow decay of correlations are considered. The results there indicate
that in such a case the decay rate of the random error can be much smaller.

\subsubsection{Numerical studies of the systematic error}

We numerically explore the systematic error, which we consider componentwise on the
level of the stress vector. That is, for $\alpha=1,2,3$ we consider the quantity
\begin{equation*}
  E_{{\rm sys},\alpha}(L;t)
  \colonequals
  \abs[\Big]{\mathbb E\big[s_{L,\alpha}(t)\big]-s_{\hom,\alpha}(t)},
\end{equation*}
where $s_{L,\alpha}$ and $s_{\hom,\alpha}$ denote the $\alpha$-component of $P_s^{-1}(\sigma_L)$
and $P_s^{-1}\sigma_{\hom}$, respectively. In order to numerically evaluate $E_{{\rm sys},\alpha}$, we proceed as follows:
\begin{enumerate}
\item We sample $M=25$ realizations of the coefficients $a,h,\sigma_{\rm yield}$
on the box $\Lambda_{L_\text{max}}$, with $L_\text{max}\colonequals 42$.
In our notation, we represent these samples by $\omega_1,\ldots,\omega_M$. Realizations of the coefficients for $L\leq L_{\text{max}}$ are obtained by restriction the smaller box $\Lambda_L$.
\item For $L \in \cb{6,10,14,18,22,26,30,34,38,42}$
 \begin{enumerate}
  \item we compute the Monte-Carlo average
   \begin{equation*}
    \overline{s}_L(t)\colonequals \frac{1}{M}\sum_{i=1}^MP_s^{-1}(\sigma_L^{\omega_i}(t)),
   \end{equation*}
  \item we evaluate the quantity
   \begin{equation*}
    \overline{E}_{{\rm sys},\alpha}(L;t)
    \colonequals
    \abs[\big]{\overline{s}_{L,\alpha}(t)-\overline{s}_{L_{\rm max},\alpha}(t)}.
   \end{equation*}
 \end{enumerate}
\end{enumerate}
The quantity $\overline{E}_{{\rm sys},\alpha}(L;t)$ is an approximation of $E_{{\rm sys},\alpha}$,
where the expectation $\mathbb E\big[s_{L,\alpha}(t)\big]$ is approximated by a Monte-Carlo average,
and $s_{\hom,\alpha}(t)$ by approximated by $\overline{s}_{L_{\text{max}},\alpha}$.
In Figure~\ref{fig:SystematicError} we plot the three components of $\overline{E}_{{\rm sys}}(L,t)$
as a function of $F_{11}(t)$ for the different values of $L$. Also, we point out the three regimes
discussed in Section~\ref{S:monotone} by plotting the fractions of plastically deformed springs.

\begin{figure}
  \centering
 \includegraphics[width=0.6\textwidth]{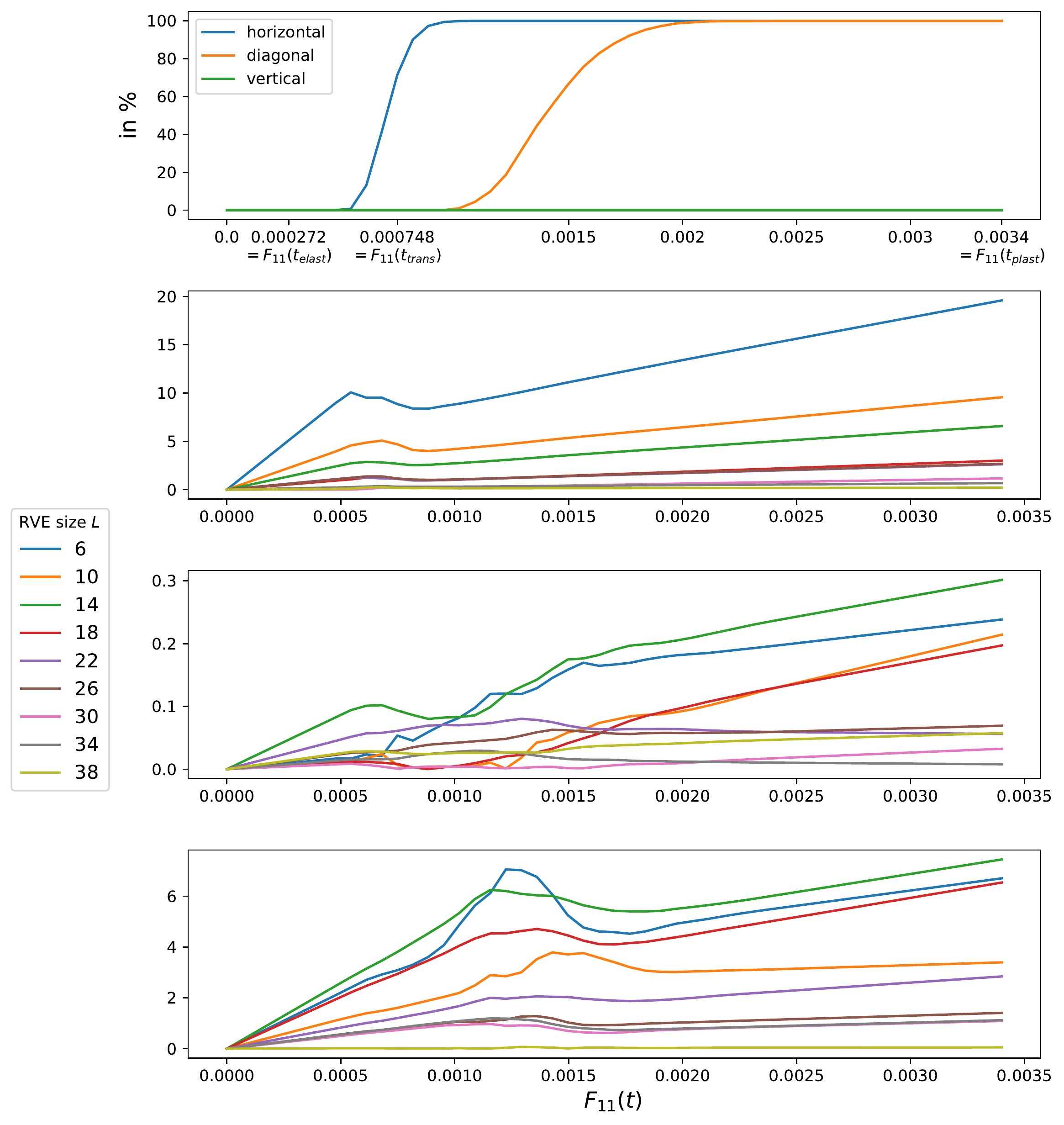}
 \caption{Numerical estimate of the systematic error; (top image) Fractions of deformed springs $R_1,R_2,R_3$;
  (below) $|\overline{s}_{L}(t)-\overline{s}_{42}(t)|$ for first, second and third component, $M=25$ realizations for each RVE size $L$}
 \label{fig:SystematicError}
\end{figure}

The simulations confirm the theoretical prediction that in the elastic regime,
the error $\overline{E}_{{\rm sys}}(L;t)$ is proportional to $F_{11}(t)$ and that
the error decreases with $L$. The simulations also show a linear dependence
of $\overline{E}_{{\rm sys}}(L;t)$ on $F_{11}(t)$ in the complement of the transitional regime
(i.e., for $F_{11}(t)$ above about $0.002$). On the other hand, in the transitional regime (horizontal/diagonal) we observe a non-monotone and oscillating dependence.

In Figure~\ref{fig:SystematicErrorGridSize} we study the systematic errors at selected
particular load steps, again as function of the RVE size $L$. We select pseudo times
\begin{equation}\label{def:times_regimes}
  t_{\rm elast}\colonequals 0.08,
  \qquad
  t_{\rm trans}\colonequals 0.22,
  \qquad
  t_{\rm plast}\colonequals 1.0,
\end{equation}
which correspond to states of the system in the elastic, transitional, and plastic regime.
The corresponding values of $F_{11}$ are shown in the first plot of Figures~\ref{fig:SystematicError}.
% \todoneukamm{@Patrick: Kann man diese Zeiten in die beiden fractions plots als ticks in der horizontalen Achse eintragen?
% \todojaap{Ich habe es einmal versucht. Ich bin nicht sicher, ob es so aussehen soll. Bei Nichtgefallen sind die alten Plot-Dateien ohne v2-Endung auch noch vorhanden. Dann einfach das include ändern.}
% }

We note that this is possible,
since the fraction of plastically deformed springs, and therefore the partition
of the load history into regimes, is almost independent of $L$.

In Figure~\ref{fig:SystematicErrorGridSize} we plot for $t\in\{t_{\rm elast},t_{\rm trans},t_{\rm plast}\}$ and $L \in \cb{6,10,14,18,22,26,30,34,38}$ the quantity
\begin{equation}\label{eq:sysrelerror}
  \frac{\overline{E}_{{\rm sys},1}(L;t)}{\overline{s}_{L_{\rm max},1}(t)},
\end{equation}
which is an approximation of the first component of the systematic, relative error.
In the figure, as a reference curve, a function $L\mapsto \Theta( L^{-2}(\ln L)^2)$ (with Landau notation) is plotted.
It describes the scaling that has been theoretically predicted for elastic systems
(cf.\ \eqref{eq:SystErrorReference}).
We see a good agreement up to $L=26$ in all three regimes.
For $L>26$ our approximation of the systematic error is not sufficiently accurate; in fact, we have $\overline{E}_{{\rm sys},\alpha}(L_{\text{max}},t)=0$ by definition.
In the elastic regime, our simulation confirms the theoretical result
of Section~\ref{sec:analytical_error_bounds}. However, no theoretical results exist that cover the transitional and plastic regime.

\begin{figure}
  \centering
 \includegraphics[width=0.49\textwidth]{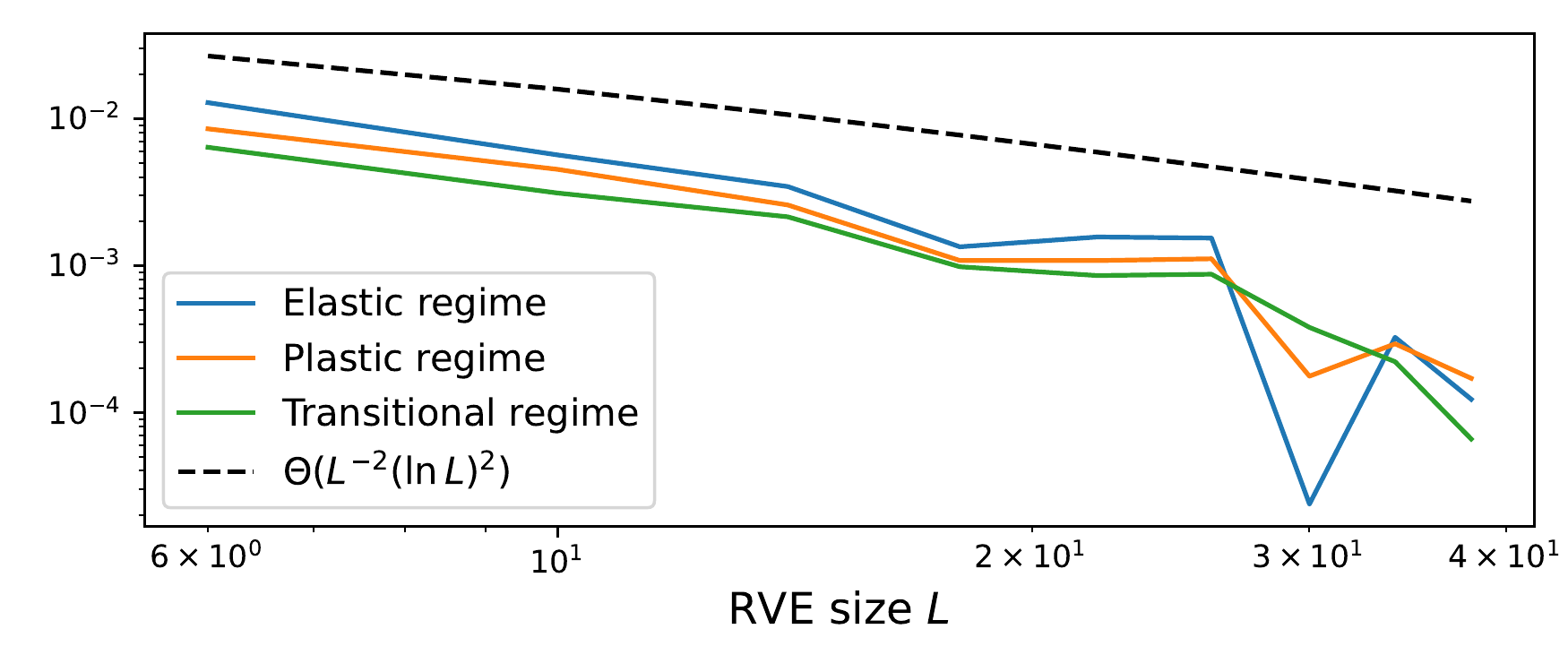}
 \caption{The first component of the relative, systematic error, cf.~\eqref{eq:sysrelerror}, at pseudo time steps in the elastic, transitional and pseudo-plastic, cf.~\ref{def:times_regimes}, for $M=25$  and $L \in \cb{6,10,14,18,22,26,30,34,38}$; the dashed line represents the upper bound of \eqref{eq:SystErrorReference}}
 \label{fig:SystematicErrorGridSize}
\end{figure} 
 
\subsubsection{Numerical study of the random error}

We study the (squared) random error of the RVE approximation.
As shown in~\eqref{eq:total_error_bound} it is a term appearing in the upper bound of the
RVE error.
In order to numerically evaluate this quantity, we consider the biased sample variance:
For each one of the $M$ independent samples $\omega_1,\ldots,\omega_M$ we compute
the approximate stress vector
\begin{equation*}
  s_L^{\omega_i}(t) \colonequals P_s^{-1}(\sigma_L^{\omega_i}(t)),
\end{equation*}
and for each component $\alpha=1,2,3$ the sample variance
\begin{equation}\label{samplevariance}
  {\boldsymbol s}^2_M[s_{L,\alpha}(t)]
  \colonequals
  \frac{1}{M}\sum_{i=1}^M
  \abs[\bigg]{s_{L,\alpha}^{\omega_i}(t)-\frac{1}{M}\sum_{j=1}^M s_{L,\alpha}^{\omega_j}(t)}^2.
\end{equation}
For the simulations in this section we set $M=40$ and $L=30$.
The plots in Figure~\ref{fig:VarianceSigmaHom} show the three components of ${\boldsymbol s}^2_M[s_L(t)]$
as functions of the horizontal macroscopic strain $F_{11}(t)$.
The colored background illustrates the three regimes.
Furthermore, we show the fraction of plastically deformed springs with the same orientation to establish a possible relation with the behavior of the error.
We observe a quadratic dependence on $F_{11}(t)$ in the elastic and plastic regime, while the behavior
is non-monotone in the transitional regime.
To make this more quantitative, we also plot the numerical slope of $  {\boldsymbol s}^2_M[s_{L}(t)]$,
i.e., the difference quotient
\begin{equation}\label{numerical_slope}
 \frac{  {\boldsymbol s}^2_M[s_L(t_k)]-  {\boldsymbol s}^2_M[s_L(t_{k-1})]}{F_{11}(t_k)-F_{11}(t_{k-1})}.
\end{equation}

\begin{figure}
  \centering
  \includegraphics[width=0.5\linewidth]{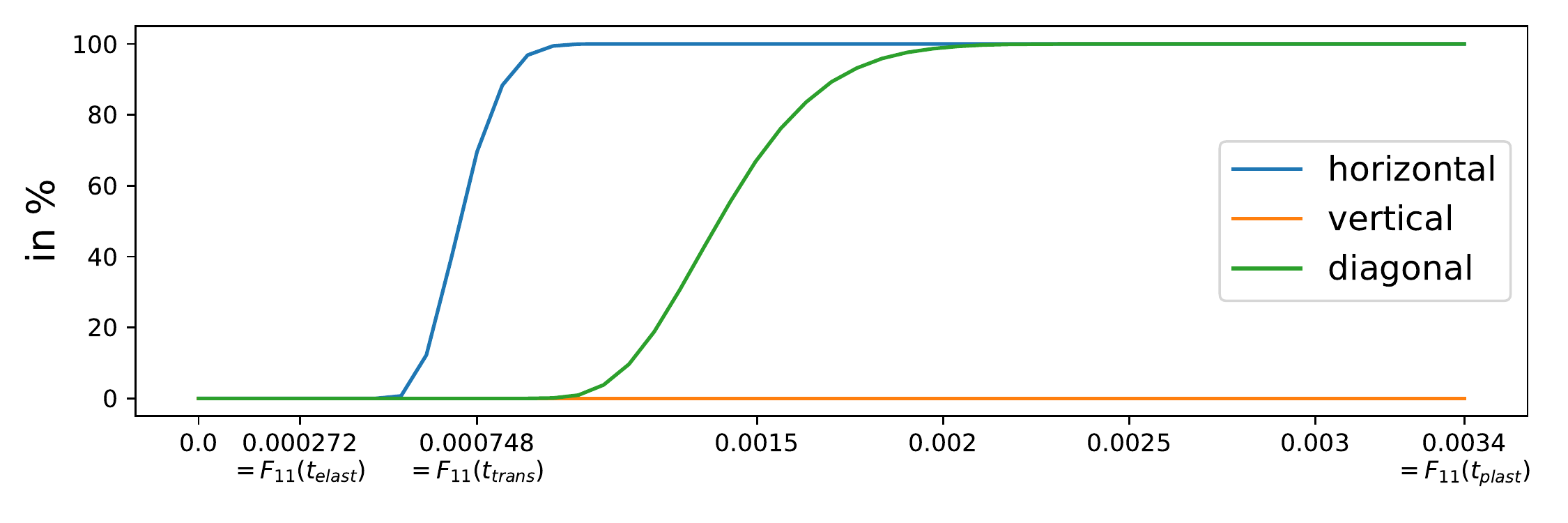}\\
  \begin{minipage}[b]{0.5\linewidth}
    \centering
    \subfigure[Variance of the stress vector components]{
    \includegraphics[width=0.95\linewidth]{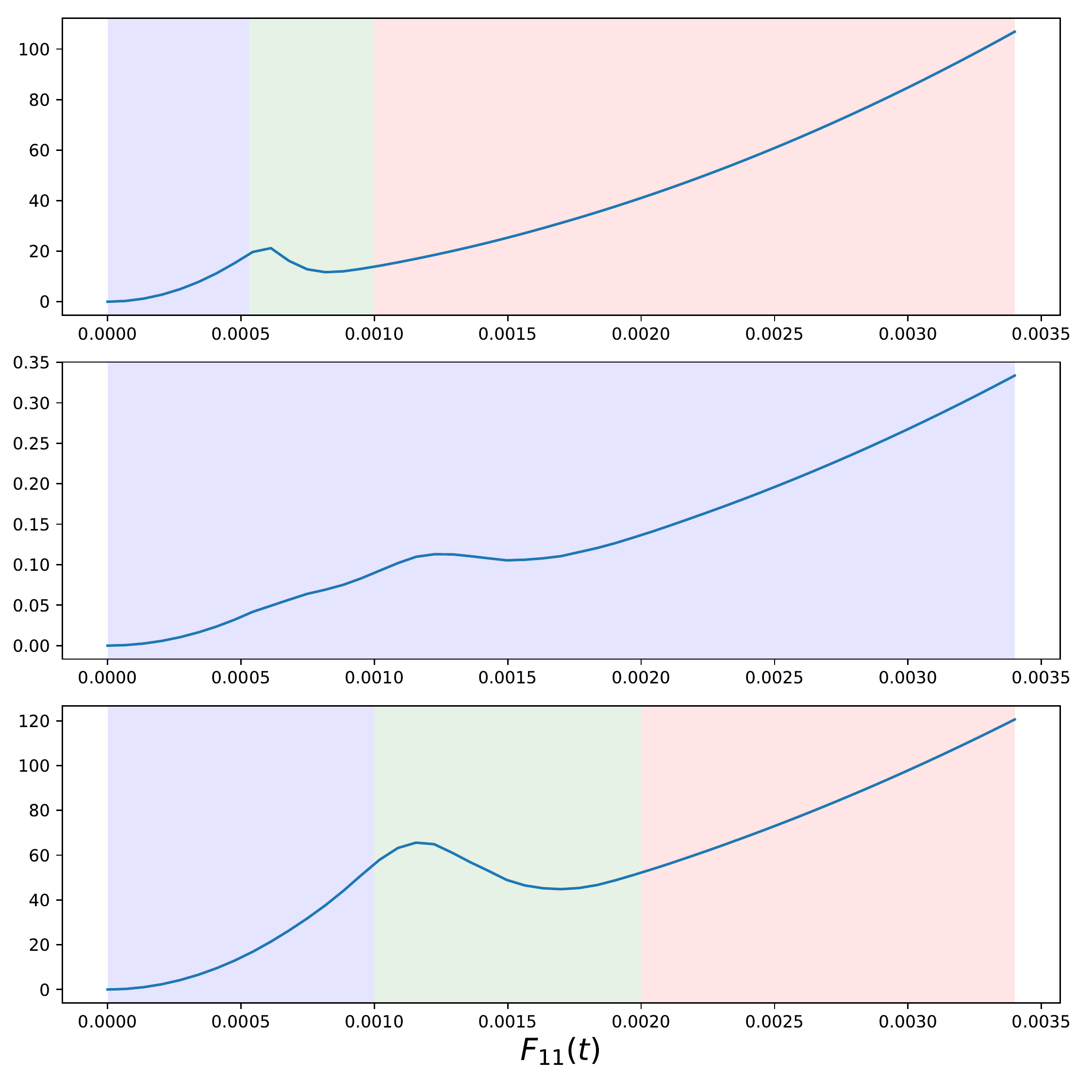}
    }
  \end{minipage}%%
  \begin{minipage}[b]{0.5\linewidth}
    \centering
   \subfigure[Slope of the variance]{
    \includegraphics[width=0.95\linewidth]{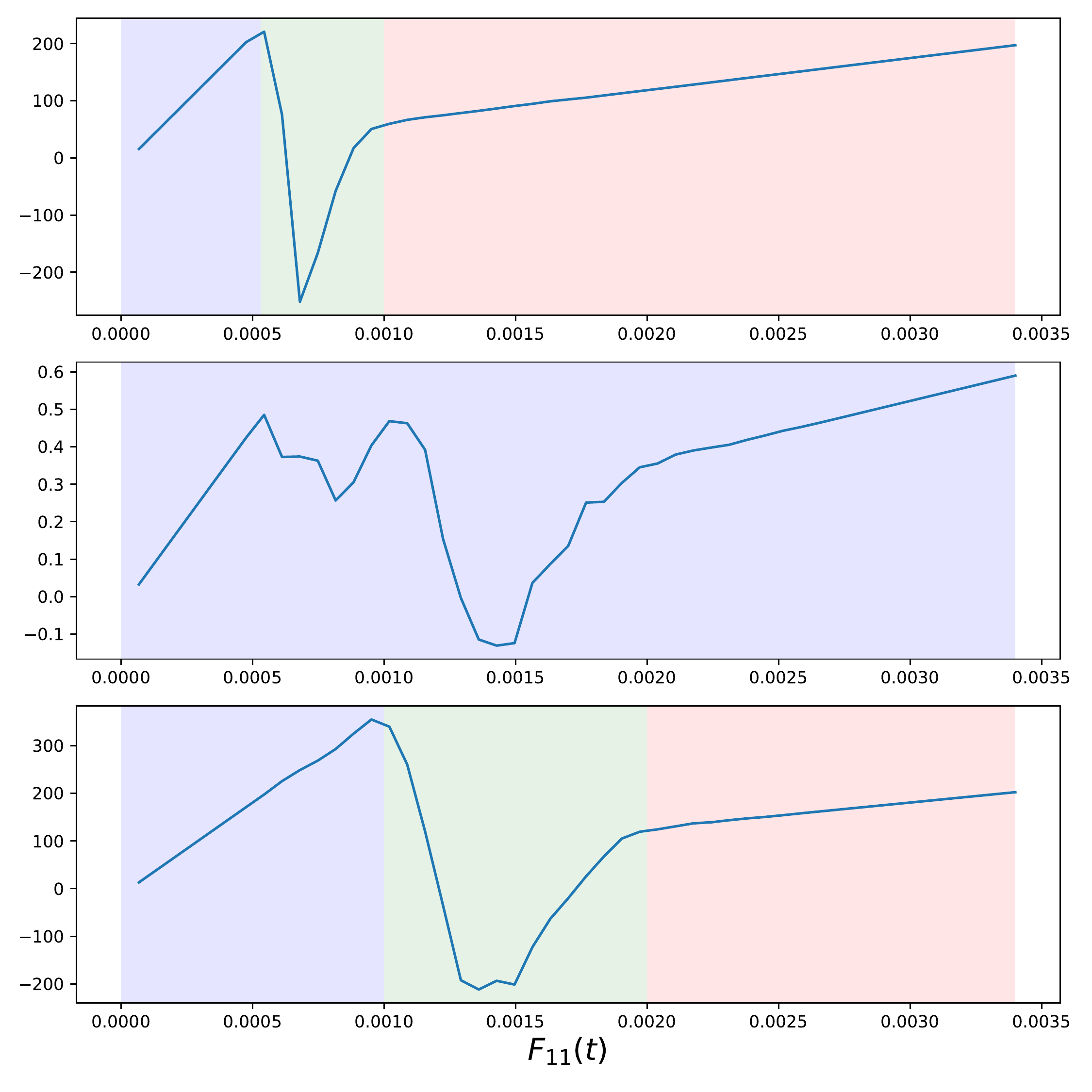}
   }
  \end{minipage}%%

  \caption{Biased sample variance of $s_L(t)$, cf.~\eqref{samplevariance} and its numerical slope, cf.~\eqref{numerical_slope}, for $L=30$.
  (a) Behavior of the sample variance of $s_L$ as a function of $F_{11}(t)$ in all three components; (b) Slope of $s_L$ as a function of $F_{11}(t)$ in all three components.}
  \label{fig:VarianceSigmaHom}
 \end{figure} 

 The quadratic dependence on $F_{11}(t)$ in the elastic regime confirms the theoretical
 estimate~\eqref{eq:RandErrorReference}, while the quadratic behavior in the plastic regime
 is not covered by the existing theory.
 Moreover, similarly to the systematic error we observe that the variance has an
 non-monotone, oscillating dependence
 on $F_{11}(t)$ in the transitional regime.
 In particular, when the horizontal springs enter the transitional regime at $t\approx 0.0006$, the sample variance and 
 its slope in the first component drop sharply. On the other hand, the third component is only influenced very mildly by the transitions of horizontal springs: By closely looking at the slope of the third component of the sample variance $t\approx 0.0006$, we observe a small deviation from the quadratic dependence., cf.\ Figure~\ref{fig:ZoomSigmaHomTimeFirstComp}.

Next, we study the scaling of the squared random error on the size $L$ of the RVE. We only consider the first component. In Figure~\ref{fig:RandomErrorGridSize}
the first component of the relative, biased sample variance
\begin{equation}\label{rel_samplevariance}
  \frac{  {\boldsymbol s}^2_M[s_{L,\alpha}(t)]}{(F_{11}(t))^2}
\end{equation}
is plotted as a function of the parameter $L$  for the times
$t\in\{t_{\rm elast}, t_{\rm trans}, t_{\rm plast}\}$ defined in \eqref{def:times_regimes}.
Recall that these pseudo times correspond to a state of the system in the elastic,
transitional and plastic regime, respectively. We choose the values for $L$ and $M$
as in Figure~\ref{fig:SystematicError}. For a better understanding, the reference function
$L \mapsto L^{-2}$, which describes the theoretical scaling of the squared random error in the elastic regime, cf.~\eqref{eq:RandErrorReference}, is shown as well.

\begin{figure}
  \centering
  \includegraphics[width=0.6\textwidth]{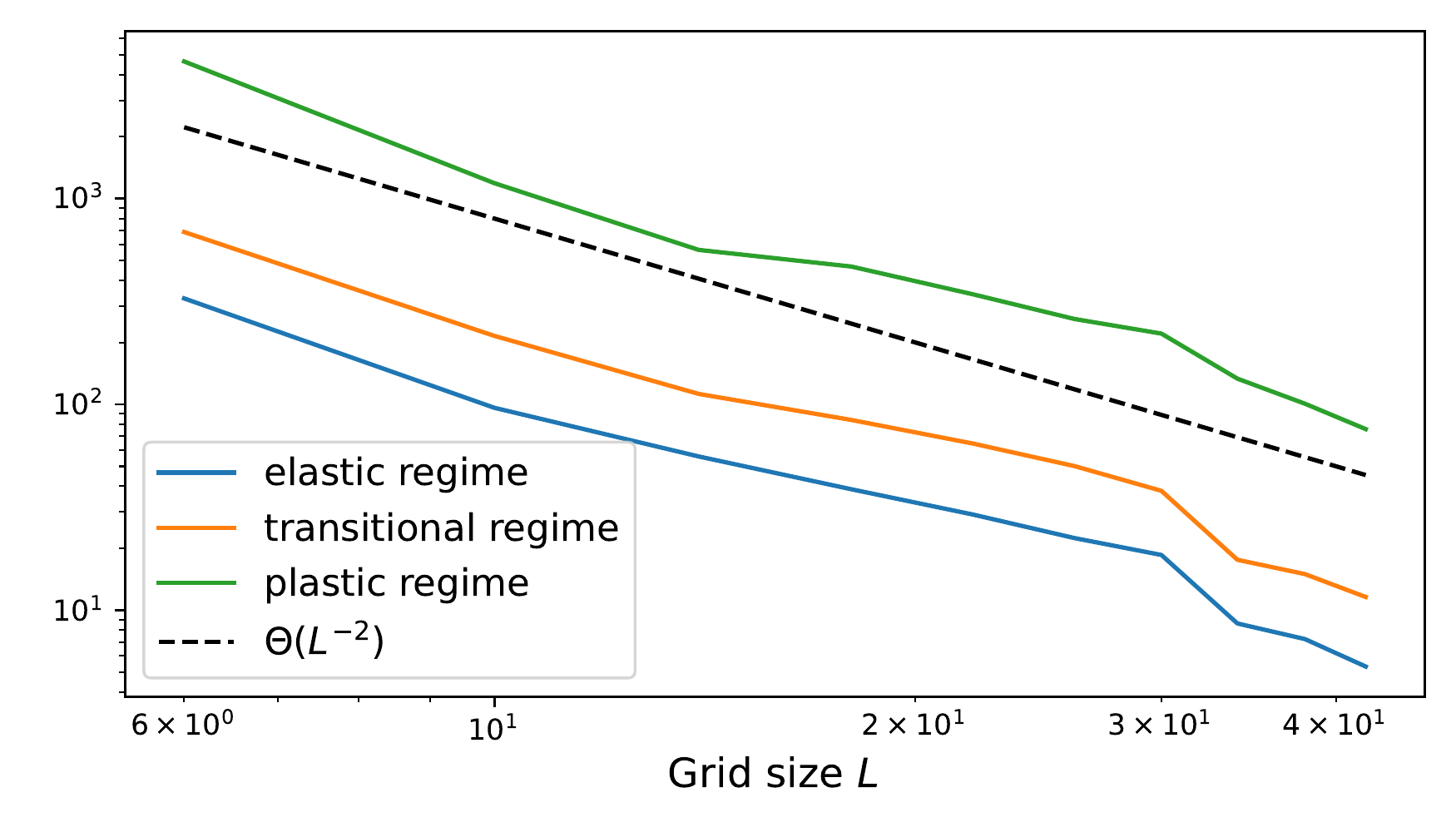}
  \caption{Relative sample variance~\eqref{rel_samplevariance} of the first component,
    at the pseudo time steps~\eqref{def:times_regimes} in the elastic, transitional
    and pseudo-plastic regimes. Monte-Carlo sampling uses $M=25$ realizations and $L \in \cb{6,10,14,18,22,26,30,34,38,42}$; the dashed line represents the (square of the) upper bound of \eqref{eq:RandErrorReference}.}
  \label{fig:RandomErrorGridSize}
\end{figure} 
Our simulations show that in all regimes the variance decays with a quadratic rate. In the elastic regime, this confirms the theoretical result \eqref{eq:RandErrorReference}.

\subsubsection{Conclusion}
We may summarize our numerical study of the RVE error for uniaxial, monotonic loading experiment as follows:
\begin{itemize}
\item In all three regimes the systematic and random error decay with a rate that is similar to the one theoretically predicted for linear elasticity.
\item The systematic and squared random error for fixed $L$ seen as a function of the pseudo time $t$ (which in the loading experiment is proportional to the loading of the system) are affine and quadratic, respectively, as long as none of the springs is in the transitional regime, i.e. in the two intervals approximately given by \eqref{nontransitional}.
\item The dependence of the errors on the pseude time $t$ is non-monotone and oscillatory if springs are in the transitional regime. In our simulation this is the case for the two intervals approximately given by \eqref{eqtransitional}. In particular, if springs of type $\alpha$ enter the transitional regime, then notable changes in the behavior of the corresponding component of the error can be observed, while the other components of the error are only mildly effected.
\end{itemize}
These observations resemble the ones of Remark~\ref{R:conclusion}.
We see that in the considered specific case of a monotonic, uniaxial loading experiment, where the applied load is non-zero only in the direction of horizontal springs, the cross-coupling between springs of different orientation is very mild.
By symmetry we expect a similar behavior, when the load is exclusively applied in the direction of the vertical and diagonal springs.
The question whether a similar behavior can be observed for more general loading paths or networks with a more complex geometry will be discussed in future works.

%%% Local Variables:
%%% mode: latex
%%% TeX-master: "rve_plasticity"
%%% End:

\section{Proof of analytical results}\label{sec:proofs}
\subsection{Preliminaries and auxiliary results}
Before presenting the proofs we first introduce some additional structures and auxiliary results.

\subsubsection{Calculus on \texorpdfstring{$L^2(\Omega)$}{L2(Omega)} and the space \texorpdfstring{$L^2_{\rm pot}(\Omega)$}{L2pot(Omega)}.}
Recall the definition of the probability space $(\Omega,\mathcal F,P)$ and the discrete dynamical system $\{\tau_{x}\}_{x\in\Z^d}$ from Section~\ref{S:probsetup}.
For  a measurable function $\varphi: \Omega\rightarrow \R^d$ we define its \textit{stochastic gradient} as
\begin{align*}
  D\varphi(\omega)=(D_1\varphi(\omega),...,D_d\varphi(\omega)),\qquad \text{where }D_i\varphi(\omega)=\varphi(\tau_{e_i}\omega)-\varphi(\omega).
\end{align*}
Note that $D:L^2(\Omega)^d\rightarrow L^2(\Omega;\R^{d\times d})$ is linear and bounded. We introduce the space of potential fields
\begin{equation}\label{eq:280:ud2}
  L^2_{\mathrm{pot}}(\Omega)^d:=\overline{\{D\varphi\,:\,\varphi\in L^2(\Omega)^d\}},
\end{equation}
where $\overline{(\cdot)}$ denotes the closure in $L^2(\Omega;\R^{d\times d})$.
\begin{lemma}[Relation between $\chi$ and $\chi_s$]\label{L:vector_pot}
  Assume \ref{B0:pl:d} and \ref{item:assumption_measurability}--\ref{item:assumption_ergodicity}.
  \begin{enumerate}[label=(\roman*)]
  \item For $i=1,\ldots,k$ there exists a function $B_i:\Z^d\to\Z^d$ with compact support such that for all $\varphi\in L^2(\Omega)$ we have
    \begin{equation*}
      \varphi(\tau_{e_i}\omega)-\varphi(\omega)=\sum_{x\in\Z^d}\chi(\tau_{-x}\omega)B_i(x).
    \end{equation*}
  \item 
  The map
  \begin{equation*}
    T:L^2_{\rm pot}(\Omega)^d\to L^2_s(\Omega),\qquad T\chi(\omega)\colonequals\sum_{x\in\Z^d}\chi(\tau_{-x}\omega)B_i(x),
  \end{equation*}
  is a linear isomorphism.
  \end{enumerate}
\end{lemma}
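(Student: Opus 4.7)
For part (i), the idea is a telescoping argument. Fix $i\in\{1,\ldots,k\}$ and choose a lattice path $0=y_0,y_1,\ldots,y_{N_i}=e_i$ in $\Z^d$ whose consecutive increments $\xi_m\colonequals y_{m+1}-y_m$ lie in $\{\pm e_1,\ldots,\pm e_d\}$ (standard basis). Telescoping and using $\varphi(\tau_{y+e_j}\omega)-\varphi(\tau_y\omega)=D_j\varphi(\tau_y\omega)$ yields
\begin{equation*}
\varphi(\tau_{e_i}\omega)-\varphi(\omega)=\sum_{m=0}^{N_i-1}D\varphi(\tau_{y_m}\omega)\,\xi_m.
\end{equation*}
Setting $B_i(x)\colonequals \sum_{m\,:\,y_m=-x}\xi_m\in\Z^d$, which is supported on the finite set $\{-y_0,\ldots,-y_{N_i-1}\}$, gives the claimed identity for $\chi=D\varphi$.

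For part (ii), the proof will proceed in three steps. In Step 1, the identity of part (i) composed with the projection $\tfrac{e_i}{|e_i|}\cdot(\cdot)$ defines $T$ on the dense subset $\{D\varphi:\varphi\in L^2(\Omega)^d\}\subset L^2_{\mathrm{pot}}(\Omega)^d$. Since the shifts $\tau_y$ are $L^2(\Omega)$-isometries by \ref{S:stat} and $B_i$ has compact support, the finite-sum formula immediately yields an upper bound
\begin{equation*}
\|T(D\varphi)\|_{L^2(\Omega)^k}\leq C\|D\varphi\|_{L^2(\Omega;\R^{d\times d})},
\end{equation*}
so $T$ extends by density to a bounded linear operator on $L^2_{\mathrm{pot}}(\Omega)^d$ with values in $L^2(\Omega)^k$. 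Moreover, by the very definition of $L^2_s(\Omega)$, the image of $T$ on the dense subset is dense in $L^2_s(\Omega)$.

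Step 2 is the heart of the argument: a matching lower bound, i.e., a Korn-type inequality in the random setting,
\begin{equation*}
\|\chi\|_{L^2(\Omega;\R^{d\times d})}\leq C\|T\chi\|_{L^2(\Omega)^k}\qquad\text{for all }\chi=D\varphi,\ \varphi\in L^2(\Omega)^d.
\end{equation*}
The approach will be to invoke the Plancherel-type spectral decomposition associated with the unitary $\Z^d$-action $\{\tau_y\}$ on $L^2(\Omega)$, so that any $\varphi\in L^2(\Omega)^d$ is represented via an integral over the dual torus $\mathbb T^d$ where each $\tau_y$ acts as multiplication by $e^{iy\cdot\xi}$. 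Both norms then become integrals against the (matrix-valued) spectral measure of $\varphi$, and the inequality reduces to the pointwise estimate
\begin{equation*}
\sum_{j=1}^d|e^{i\xi_j}-1|^2|v|^2 \leq C\sum_{i=1}^k|e^{i\xi\cdot e_i}-1|^2\Bigl|\tfrac{e_i}{|e_i|}\cdot v\Bigr|^2,\qquad \xi\in\mathbb T^d,\ v\in\R^d.
\end{equation*}
This in turn follows from the deterministic inequality \eqref{eq:12333} applied to the rank-one matrix $F=\xi v^\top$ (which has $\tfrac{e}{|e|}\cdot F\tfrac{e}{|e|}=\tfrac{(e\cdot\xi)(e\cdot v)}{|e|^2}$ and $|\operatorname{sym}F|^2\geq \tfrac12|\xi|^2|v|^2$), combined with the elementary bound $c|\xi_j|^2\leq|e^{i\xi_j}-1|^2=4\sin^2(\xi_j/2)$ and its converse up to a constant on $[-\pi,\pi]^d$; the contribution of the point $\xi=0$ (which is the invariant subspace, consisting of constants by ergodicity \ref{item:assumption_ergodicity}) is controlled trivially since $D\varphi$ vanishes there.

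Step 3 is routine: the lower bound of Step 2 extends by density to all of $L^2_{\mathrm{pot}}(\Omega)^d$, giving injectivity of $T$ and closedness of its range; combined with the density of the image from Step 1, $T$ is surjective onto $L^2_s(\Omega)$. The open mapping theorem then upgrades $T$ to a topological isomorphism. The main obstacle is Step 2: the Korn inequality in \ref{B0:pl:d} is stated only for compactly supported functions on $\Z^d$, so a direct pathwise application is unavailable; the spectral reduction converts this into the finite-dimensional estimate \eqref{eq:12333}, but care is needed near the low-frequency regime where both sides degenerate at the same rate.
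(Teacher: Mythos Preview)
Your argument for (i) is correct and is exactly the paper's: telescoping along a lattice path with increments in $\{\pm e_1,\ldots,\pm e_d\}$ expresses $\varphi(\tau_{e_i}\omega)-\varphi(\omega)$ as a finite linear combination of the columns of $D\varphi$ evaluated at shifts of $\omega$.

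For (ii), your overall strategy---boundedness of $T$, density of the image, and injectivity via a random-field Korn inequality---also coincides with the paper's. The paper does not prove the Korn inequality in the probability space; it simply cites \cite[Lemma~4.17]{neukamm2017stochastic} and remarks that it follows from \ref{B0:pl:d}. Your spectral approach is a natural way to supply that proof, and your reduction to the pointwise multiplier estimate
\[
\sum_{j=1}^d|e^{i\xi_j}-1|^2\,|v|^2 \le C\sum_{i=1}^k|e^{i\xi\cdot e_i}-1|^2\,\Bigl|\tfrac{e_i}{|e_i|}\cdot v\Bigr|^2,\qquad \xi\in\mathbb T^d,
\]
is correct (the rank-one reduction of the matrix spectral measure is standard).

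There is, however, a genuine gap in your derivation of this pointwise inequality from \eqref{eq:12333}. Applying \eqref{eq:12333} to $F=\xi v^\top$ gives $|\xi|^2|v|^2 \lesssim \sum_i (e_i\cdot\xi)^2(e_i\cdot v)^2/|e_i|^4$, and to finish you would need $(e_i\cdot\xi)^2 \lesssim |e^{i\xi\cdot e_i}-1|^2$. This fails for edges with $|e_i|>1$: e.g.\ for $e_3=e_1+e_2$ and $\xi_1+\xi_2$ near $2\pi$, the right-hand side vanishes while $(e_3\cdot\xi)^2\approx 4\pi^2$. The estimate \eqref{eq:12333} is only the \emph{low-frequency} consequence of \ref{B0:pl:d} and is strictly weaker than the full discrete Korn inequality; you cannot recover the latter from it.

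The fix is direct: the pointwise multiplier inequality \emph{is} assumption~\ref{B0:pl:d}, rewritten via the Fourier transform on $\ell^2(\Z^d)^d$. Indeed, Plancherel turns
\[
\sum_{\mathsf e\in\mathsf E}|\nabla u(\mathsf e)|^2\le C\sum_{\mathsf e\in\mathsf E}|\nabla_s u(\mathsf e)|^2
\]
into
\[
\int_{\mathbb T^d}\sum_{i=1}^k\frac{|e^{i\xi\cdot e_i}-1|^2}{|e_i|^2}\,|\hat u(\xi)|^2\,d\xi
\le C\int_{\mathbb T^d}\sum_{i=1}^k\frac{|e^{i\xi\cdot e_i}-1|^2}{|e_i|^2}\,\Bigl|\tfrac{e_i}{|e_i|}\cdot\hat u(\xi)\Bigr|^2\,d\xi,
\]
and since $\hat u$ ranges over a dense subspace this yields the pointwise bound (with the left sum over all $i=1,\ldots,k$, which dominates the sum over the standard basis since $\mathsf E_0$ is finite). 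With this substitution your Step~2 goes through, and Step~3 then concludes the proof as you indicated.
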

\begin{proof}
  \begin{enumerate}[label=(\roman*)]
  \item Since any $e_i\in\mathsf E_0$ can be written as a linear combination of $e_1,\ldots,e_d$, there exists a path $\ell_0=0,\ldots,\ell_m=e_i$ such that $\ell_{j}-\ell_{j-1}\in\{\pm e_1,\ldots,\pm e_d\}$. Hence, by the group property of $x\mapsto\tau_x$, we have $\varphi(\tau_{e_i}\omega)-\varphi(\tau_0\omega)=\sum_{j=1}^m\varphi(\tau_{\ell_j}\omega)-\varphi(\tau_{\ell_{j-1}}\omega)$. Since the differences $\varphi(\tau_{\ell_j}\omega)-\varphi(\tau_{\ell_{j-1}}\omega)$ can be expressed by one of the column vectors of $D\varphi$, the claim follows.
  \item In view of (i), and the definition of the spaces $L^2_{\rm pot}(\Omega)^d$ and $L^2_s(\Omega)$, the map $T$ is linear, bounded and onto. Furthermore the Korn inequality for random field, see \cite[Lemma 4.17]{neukamm2017stochastic}, which follows from the Korn inequality  of Assumption~\ref{B0:pl:d}, implies that $\|\varphi\|_{L^2(\Omega;\R^{d\times d})}\leq C\|T\varphi\|_{L^2(\Omega)^k}$ and thus we see that $T$ is an isomorphism.
  \end{enumerate}
\end{proof}
%
% Elements in $L^2_{\mathrm{pot}}(\Omega)^d$ and $L^2_s(\Omega)$ can be represented via random fields:
% %
% \begin{lemma}
%   Assume (A1) and (S1) -- (S4).  Let $\chi\in L^2_{\mathrm{pot}}(\Omega)^d$. Then there exists a unique random field $\varphi:\Omega\times\Z^d\to\R^d$ such that
%   \begin{equation*}
%     \chi(\tau_x\omega)=\nabla^1\varphi(\omega,x)\qquad\text{and}\qquad \varphi(\omega,0)=0.
%   \end{equation*}
%   Let $\chi_s\in L^2_{s}(\Omega)$. Then there exists a unique random field $\varphi:\Omega\times\Z^d\to\R^d$ such that
%   \begin{equation*}
%     \chi_s(\tau_x\omega)=\nabla_s\varphi(\omega,x)\qquad\text{and}\qquad \varphi(\omega,0)=0.
% \end{equation*}
% \end{lemma}
% %

\subsubsection{The transformation \texorpdfstring{$\mathcal T_L$}{TL}}
In the proofs of Theorems~\ref{thm:1462:d} and ~\ref{T:RVE-W} we appeal to the transformation $\mathcal T_L$ and a variant denoted by $\widehat{\mathcal T}_L$. They are closely related to the stochastic unfolding operator introduced in \cite{neukamm2017stochastic}.
We start this section by summarizing some auxiliary results and properties of the transformations $\mathcal T_L$ and $\widehat{\mathcal T}_L$.

We use the following notation: For $x\in\R^d$ we denote by $\lfloor x\rfloor$ the integer part of $x$, i.e.,
\begin{equation}
  \lfloor x\rfloor =z\text{ where $z_i=\max\{m\in\Z\,:\,m\leq x\}$ for all $i=1,\ldots,d$.}
\end{equation}
Furthermore, we set $\Lambda:=[0,1)^d$ and define
\begin{align*}
  H^1_{\mathrm{per}}(\Lambda)&:=\big\{\varphi\in H^1(\Lambda)\,:\,\varphi\text{ is periodic, i.e., }\varphi(\cdot+k)=\varphi\text{ for all }k\in\Z^d\,\big\},\\
  H^1_{\mathrm{per,av}}(\Lambda)&:=\big\{\varphi\in H^1_{\rm per}(\Lambda)\,:\,\int_\Lambda\varphi=0\,\big\}.
\end{align*}

\newcommand{\unfh}{\widehat{\mathcal T}_L}
The first proposition is about properties of the transformation
\begin{equation*}
  \unfh:L^2(\Omega)\otimes L^2(\Lambda_L)\to L^2(\Omega)\otimes L^2(\Lambda),\qquad \unfh\varphi(\omega,x)\colonequals\varphi(\tau_{-\lfloor Lx\rfloor}\omega, \lfloor Lx\rfloor).
\end{equation*}

\begin{lemma}[Properties and definition of $\unfh$]\label{L:unfh}
  Assume \ref{B0:pl:d} and \ref{item:assumption_measurability}--\ref{item:assumption_ergodicity}.
  \begin{enumerate}[label=(\roman*)]
  \item (Isometry property). For any $L\in\N$ the map $\unfh$ is a linear isometry.
  \item (Transformation property). Let $g:\Omega\times\R\to\R$ be measurable and non-negative. Then
    \begin{equation*}
      \mathbb E\Big[L^{-d}\sum_{z\in\Lambda_L}g\big(\tau_z\omega,\varphi(\omega,z)\big)\Big]=\mathbb E\Big[\int_\Lambda g(\omega,\unfh\varphi(\omega,x))\,dx\Big],
    \end{equation*}
    for all $\varphi\in L^2(\Omega)\otimes L^2(\Lambda_L)$.
    
  \item (Compactness). Consider a sequence $(\varphi_L)$, $\varphi_L\in L^2(\Omega)\otimes L^2(\Lambda_L)$ that is bounded, i.e.,
    \begin{equation*}
      \limsup\limits_{L\to\infty}\|\varphi_L\|_{L^2(\Omega)\otimes L^2(\Lambda_L)}<\infty.
    \end{equation*}
    Then there exists $\varphi\in L^2(\Omega)\otimes L^2(\Lambda)$ and a subsequence such that
    \begin{equation*}
      \unfh\varphi_L\wto \varphi\qquad\text{weakly in }L^2.
    \end{equation*}
  \item (Compactness for gradients). Consider a sequence $(\varphi_L)$, $\varphi_L\in L^2(\Omega)\otimes L^2(\Lambda_L)^d$ that is bounded in the sense that
    \begin{equation*}
      \limsup\limits_{L\to\infty}\big(L^{-1}\|\varphi_L\|_{L^2(\Omega)\otimes L^2(\Lambda_L)}+\|\nabla_s\varphi_L\|_{L^2(\Omega)\otimes L^2(\Lambda_L)}\big)<\infty.
    \end{equation*}
    Then there exists $\varphi\in H^1_{\mathrm{per}}(\Lambda)$, $\chi_s\in L^2_s(\Omega)\otimes L^2(\Lambda)$ and a subsequence such that
    \begin{equation*}
      \big(L^{-1}\unfh\varphi_L,\unfh\nabla_s\varphi_L\big)\wto \big(\varphi,P_s\nabla\varphi+\chi_s\big)\qquad\text{weakly in }L^2.
    \end{equation*}

  \item (Recovery sequence).
    Let $\varphi\in H^1_{\mathrm{per,av}}(\Lambda)^d$, $\chi_s\in L^2_s(\Omega)\otimes L^2(\Lambda)$. Then there exists a sequence  $(\varphi_L)$, $\varphi_L\in L^2(\Omega)\otimes L^2_{\mathrm{av}}(\Lambda_L)^d$ such that
    \begin{equation*}
      \big(L^{-1}\unfh\varphi_L,\unfh\nabla_s\varphi_L\big)\to \big(\varphi,P_s\nabla\varphi+\chi_s\big)\qquad\text{strongly in }L^2.
    \end{equation*}
  \end{enumerate}
\end{lemma}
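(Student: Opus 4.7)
The plan is to prove the five items sequentially, reducing (iii)--(v) to the change-of-variables identity in (ii) combined with standard stochastic two-scale arguments. For items (i) and (ii) I partition $\Lambda = \bigcup_{z\in\Lambda_L} L^{-1}(z+\Lambda)$ into boxes of volume $L^{-d}$ and note that $\lfloor Lx\rfloor = z$ on the box $L^{-1}(z+\Lambda)$. For any measurable $G:\Omega\times\R\to[0,\infty)$ this yields
\[
  \mathbb E\int_\Lambda G(\omega,\unfh\varphi(\omega,x))\,dx
  = L^{-d}\sum_{z\in\Lambda_L}\mathbb E\bigl[G(\omega,\varphi(\tau_{-z}\omega,z))\bigr]
  = L^{-d}\sum_{z\in\Lambda_L}\mathbb E\bigl[G(\tau_z\omega,\varphi(\omega,z))\bigr],
\]
where the last identity uses measure preservation~\ref{S:stat} via the substitution $\omega \mapsto \tau_{-z}\omega$. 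This is (ii); choosing $G(\omega,s)=|s|^2$ and polarizing gives the isometry (i), which in turn implies (iii) by weak compactness in the Hilbert space $L^2(\Omega)\otimes L^2(\Lambda)$.

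Item (iv) is the main technical step. From the boundedness hypothesis and the isometry in (i), I extract a subsequence with $L^{-1}\unfh\varphi_L \wto v$ in $L^2(\Omega)\otimes L^2(\Lambda)^d$ and $\unfh\nabla_s\varphi_L \wto \chi^*$ in $L^2(\Omega)\otimes L^2(\Lambda)^k$. It remains to prove that (a)~$v \in H^1_{\rm per}(\Lambda)^d$ is deterministic, and (b)~$\chi^* = P_s\nabla v + \chi_s$ with $\chi_s \in L^2_s(\Omega)\otimes L^2(\Lambda)$. For (a) the gradient bound acts as a Poincaré--Wirtinger estimate on the $L^{-1}$-cell structure; combined with the discrete Korn inequality of~\ref{B0:pl:d}, it forces the $\omega$-fluctuations of $L^{-1}\unfh\varphi_L$ to disappear in the limit, following the stochastic two-scale compactness argument of \cite{neukamm2017stochastic}. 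Periodicity of $v$ is inherited from the $L$-periodicity of $\varphi_L$. For (b), I pass to the limit in a discrete product rule tested against functions of the form $\phi(\omega)\eta(x)$ with $\phi\in L^2(\Omega)$ and $\eta\in C^\infty_c(\Lambda)$: the orthogonal projection of $\chi^*$ onto the deterministic (constant in $\omega$) subspace is identified as $P_s\nabla v$, while the complementary, mean-zero-in-$\omega$ part is square-integrable and by the Korn-type characterization in Lemma~\ref{L:vector_pot} lies in $L^2_s(\Omega)\otimes L^2(\Lambda)$.

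For item (v) I use a density-plus-construction argument. By Lemma~\ref{L:vector_pot}, write $\chi_s = T\chi$ with $\chi \in L^2_{\rm pot}(\Omega)^d\otimes L^2(\Lambda)$, and approximate $\chi$ by finite sums $\sum_{i=1}^N D\psi_i(\omega)\,\eta_i(x)$ with $\psi_i \in L^2(\Omega)^d$ and $\eta_i \in C^\infty(\overline\Lambda)$, while simultaneously approximating $\varphi$ by smooth, periodic, mean-free $\tilde\varphi$. Then I set
\[
  \varphi_L(\omega,z) \colonequals L\,\tilde\varphi(z/L) + \sum_{i=1}^N \eta_i(z/L)\,\psi_i(\tau_z\omega) - m_L(\omega),
\]
where $m_L$ is a random mean corrector enforcing $\varphi_L \in L^2(\Omega)\otimes L^2_{\rm av}(\Lambda_L)^d$; because $\tilde\varphi$ is itself mean-free on $\Lambda$, this corrector is of lower order and vanishes in the limit. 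Applying $\unfh$ and $\nabla_s$ and invoking (ii) together with the ergodic theorem (Theorem~\ref{T:ergodic}) produces the claimed strong $L^2$-convergences on the approximation, and a diagonal argument removes the two approximation parameters.

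The main obstacle is the identification in item (iv): establishing that the weak limit $v$ is deterministic and that the gradient limit decomposes as $P_s\nabla v + \chi_s$. This requires carefully combining the $L^{-1}$-scaled Poincaré inequality, the discrete Korn inequality, and the stochastic two-scale structure of $\unfh$, and is the compactness backbone of the RVE theory, mirroring the fundamental compactness statement for the discrete stochastic unfolding operator in \cite{neukamm2017stochastic}.
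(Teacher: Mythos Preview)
Your treatment of (i)--(iii) matches the paper's: the box decomposition of $\Lambda$ plus measure preservation gives (ii), which specializes to (i), and (iii) is immediate from (i) and weak compactness in Hilbert spaces.

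For (iv) and (v) your argument is correct but the paper takes a more economical route that is worth knowing. Rather than sketching direct compactness and recovery proofs, the paper observes that the operator $\unfh$ \emph{is} the discrete stochastic unfolding operator $\mathcal T_\eps$ of \cite{neukamm2017stochastic} after a rescaling: setting $\eps=L^{-1}$ and $\varphi_\eps(\omega,z)\colonequals\eps\,\varphi_L(\omega,z/\eps)$, one has the identities
\[
  L^{-1}\unfh\varphi_L=\mathcal T_\eps\varphi_\eps,\qquad \unfh\nabla_s\varphi_L=\mathcal T_\eps\nabla_s\varphi_\eps,
\]
together with matching norm identities. With this in hand, (iv) and (v) become direct citations of \cite[Propositions~3.5 and~3.8]{neukamm2017stochastic} (combined with Lemma~\ref{L:vector_pot} to translate between $L^2_{\rm pot}(\Omega)^d$ and $L^2_s(\Omega)$). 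Your explicit recovery construction in (v) is essentially what underlies those propositions, so both approaches arrive at the same place; the paper's rescaling identity simply avoids reproducing the two-scale compactness machinery and keeps the argument to a few lines.
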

\begin{proof}
  The properties (i) and (ii) follow from (S3) as can be seen by a direct computation.
  Property (iii) follows from (i) and  the fact that bounded sets in $L^2(\Omega)\otimes L^2(\Lambda)$ are weakly compact.
  The remaining properties follow from analogous statements for the stochastic unfolding operator $\mathcal T_\eps$ introduced in \cite{neukamm2017stochastic}. It maps a discrete function $\varphi_\eps:\Omega\times\eps\Z^d\to\R$ to the continuum function $\mathcal T_\eps\varphi_\eps:\Omega\times\R^d\to\R$ defined by
  \begin{equation*}
    {\mathcal T}_{\eps}\varphi_\eps(\omega,x)\colonequals\varphi_\eps(\tau_{-\lfloor\tfrac{x}{\eps}\rfloor}\omega,\lfloor\tfrac{x}{\eps}\rfloor).
  \end{equation*}
  The connection to our setting is as follows: If we consider $\varphi_L\in L^2(\Omega)\otimes L^2(\Lambda_L)$, and set
  \begin{equation*}
    \varphi_\eps(\omega,z)\colonequals \eps\varphi_L(\omega,\tfrac{z}{\eps}),\qquad \eps=L^{-1},
  \end{equation*}
  then for all $x\in\R^d$ and $P$-a.e.~$\omega\in\Omega$ we have
  \begin{equation}\label{eq:equivunfold}
    \frac1L\unfh\varphi_L(\omega,x)=\mathcal T_\eps\varphi_\eps(\omega,x)\qquad\text{and}\qquad (\unfh\nabla_s\varphi_L)(\omega,x)=(\mathcal T_\eps\nabla_s\varphi_\eps)(\omega,x).
  \end{equation}
  Furthermore, we have the following identities for the norms:
  \begin{equation}\label{eq:equivnorm}
    \frac1L\|\varphi_L\|_{L^2(\Lambda_L)}=\|\varphi_\eps\|_{L^2(\Lambda)},\qquad 
    \|\nabla_s\varphi_L\|_{L^2(\mathsf E\cap \Lambda_L)}=\|\nabla_s\varphi_\eps\|_{L^2(\eps\mathsf E\cap \Lambda)}.
  \end{equation}
  Based on these principles and Lemma~\ref{L:vector_pot}, (iv) and (v) can be deduced from \cite[Proposition 3.5 and Proposition 3.8]{neukamm2017stochastic}.
  % and
%   We now turn to the proof of (iv). We first note that Korn's inequality combined with \eqref{eq:comp:grad} and \eqref{eq:equivnorm} implies that 
%   \begin{equation*}
%     \limsup\limits_{\eps\to0}\|\varphi_\eps\|_{L^2(\Omega)\otimes L^2(\eps\Z^d\cap \Lambda)}+\|\nabla\varphi_\eps\|_{L^2(\Omega)\otimes L^2(\eps\mathsf E\cap \Lambda)}<\infty.
%   \end{equation*}
%   Then by appealing to \cite[Proposition 3.5]{neukamm2017stochastic} and \eqref{eq:equivunfold} we deduce that there exists $\varphi\in H^1(\Lambda)$ and $\chi\in L^2_{\rm pot}(\Omega)\otimes L^2(\Lambda)$ such that
%   \begin{equation*}
%     (\frac1L\unfh\varphi_L,\unfh\nabla\varphi_L)
%   \end{equation*}
% .......
\end{proof}

By extending $\unfh$ to the domain $L^2(\Omega)\otimes L^2(\Lambda_L)\otimes L^2(Q)$ with $Q\subset\R^d$ open and bounded, we recover the operator $\mathcal T_L$ introduced in Section~\ref{S:RVE_conv}, and the properties of $\unfh$ summarized in Lemma~\ref{L:unfh} are inherited by $\mathcal T_L$:
\begin{lemma}\label{lem:1204}
  Let $\mathcal T_L$ be defined as in \eqref{eq:TL}.
  \begin{enumerate}[label=(\roman*)]
  \item (Isometry property). For any $L\in\N$ the map $\mathcal T_L$ is
    a linear isometry.
\item (Compactness). Consider a sequence $(u_L)$, $u_L\in L^2(\Omega)\otimes L^2(\Lambda_L) \otimes L^2(Q)$ that is bounded, i.e.,
    \begin{equation*}
      \limsup\limits_{L\to\infty}\|\varphi_L\|_{L^2(\Omega)\otimes L^2(\Lambda_L)\otimes L^2(Q)}<\infty.
    \end{equation*}
    Then there exists $\varphi\in L^2(\Omega)\otimes L^2(\Lambda)\otimes L^2(Q)$ and a subsequence such that
    \begin{equation*}
      \mathcal T_L\varphi_L\wto \varphi\qquad\text{weakly in }L^2.
    \end{equation*}
  \item (Compactness for gradients).
    Consider a sequence $(\varphi_L)$, $\varphi_L\in L^2(\Omega)\otimes L^2(\Lambda_L)^d\otimes L^2(Q)$ that is bounded in the sense that
    \begin{equation}\label{eq:comp:grad}
      \limsup\limits_{L\to\infty}\big(L^{-1}\|\varphi_L\|_{L^2(\Omega)\otimes L^2(\Lambda_L)\otimes L^2(Q)}+\|\nabla_s\varphi_L\|_{L^2(\Omega)\otimes L^2(\Lambda_L)\otimes L^2(Q)}\big)<\infty.
    \end{equation}
    Then there exists $\varphi\in H^1_{\mathrm{per}}(\Lambda)\otimes L^2(Q)^d$, $\chi_s\in L^2_s(\Omega)\otimes L^2(\Lambda)\otimes L^2(Q)$ and a subsequence such that
    \begin{equation*}
      \big(L^{-1}\mathcal T_L\varphi_L,\mathcal T_L\nabla_s\varphi_L\big)\wto \big(\varphi,P_s\nabla\varphi+\chi_s\big)\qquad\text{weakly in }L^2.
    \end{equation*}
    Furthermore, if additionally $\varphi_L\in L^2(\Omega)\otimes L^2_{\rm av}(\Lambda_L)^d$, then $\varphi\in L^2(\Omega)\otimes H^1_{\mathrm{per,av}}(\Lambda)^d$.
  \item (Recovery sequence).
    Let $\varphi\in H^1_{\mathrm{per,av}}(\Lambda)\otimes L^2(Q)^d$, $\chi_s\in L^2_s(\Omega)\otimes L^2(\Lambda)\otimes L^2(Q)$. Then there exists a sequence  $(\varphi_L)$, $\varphi_L\in L^2(\Omega)\otimes L^2_{\mathrm{av}}(\Lambda_L)^d\otimes L^2(Q)$ such that
    \begin{equation*}
      \big(L^{-1}\mathcal T_L\varphi_L,\mathcal T_L\nabla_s\varphi_L\big)\to \big(\varphi,P_s\nabla\varphi+\chi_s\big)\qquad\text{strongly in }L^2.
    \end{equation*}
  \end{enumerate}
\end{lemma}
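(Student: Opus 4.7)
The strategy is to reduce each assertion to its counterpart in Lemma~\ref{L:unfh} by exploiting that $\mathcal T_L$ acts only on the $(\omega,z)$ variables, leaving $x\in Q$ as a parameter. Under the canonical isometric identification
\begin{equation*}
L^2(\Omega)\otimes L^2(\Lambda_L)\otimes L^2(Q)\;\cong\;L^2\big(Q;\,L^2(\Omega)\otimes L^2(\Lambda_L)\big),
\end{equation*}
and likewise for the target, $\mathcal T_L$ is nothing but $\widehat{\mathcal T}_L\otimes\operatorname{Id}_{L^2(Q)}$, i.e., it applies $\widehat{\mathcal T}_L$ of Lemma~\ref{L:unfh} pointwise in~$x$. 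Part (i) then follows from Fubini and Lemma~\ref{L:unfh}(i):
\begin{equation*}
\|\mathcal T_L f\|_{L^2(\Omega)\otimes L^2(\Lambda)\otimes L^2(Q)}^2=\int_Q\|\widehat{\mathcal T}_L f(\cdot,\cdot,x)\|_{L^2(\Omega)\otimes L^2(\Lambda)}^2\,dx=\|f\|_{L^2(\Omega)\otimes L^2(\Lambda_L)\otimes L^2(Q)}^2.
\end{equation*}
Part (ii) is then immediate from weak sequential compactness of bounded sets in a Hilbert space.

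Part (iii) is the main content. From (i) and the bound \eqref{eq:comp:grad} both sequences $L^{-1}\mathcal T_L\varphi_L$ and $\mathcal T_L\nabla_s\varphi_L$ are bounded in $L^2$, so by (ii) they admit weak $L^2$-limits $\varphi$ and $\eta$ along a common subsequence. To identify the structure of these limits, I would test against functions of the product form $\zeta(x)\psi(\omega)\theta(z)$ with $\zeta\in C^\infty_c(Q)$, $\psi\in L^2(\Omega)$, and $\theta$ smooth on $\Lambda$. Fixing $\zeta$ and defining the sliced sequence $\tilde\varphi_L^\zeta(\omega,z):=\int_Q\zeta(x)\varphi_L(\omega,z,x)\,dx$, the hypotheses of Lemma~\ref{L:unfh}(iv) are satisfied, giving a decomposition with a limit $\varphi^\zeta\in H^1_{\rm per}(\Lambda)^d$ and $\chi_s^\zeta\in L^2_s(\Omega)\otimes L^2(\Lambda)$. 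Uniqueness of the decomposition $\eta=P_s\nabla\varphi+\chi_s$ (guaranteed by the Korn inequality \ref{B0:pl:d} together with the isomorphism of Lemma~\ref{L:vector_pot}, which makes the direct sum $P_s(\nabla H^1_{\rm per})\oplus L^2_s(\Omega)$ closed) and linearity in $\zeta$ allow one to assemble the pointwise identifications into measurable maps $x\mapsto\varphi(\cdot,x)\in H^1_{\rm per}(\Lambda)^d$ and $x\mapsto\chi_s(\cdot,\cdot,x)\in L^2_s(\Omega)\otimes L^2(\Lambda)$. The mean-free property in the variant $\varphi_L\in L^2_{\rm av}(\Lambda_L)^d$ passes to the weak limit by linearity of $\sum_{z\in\Lambda_L}$.

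Part (iv) is handled by density. Finite sums of pure tensors $\zeta(x)\tilde\varphi(z)$ (respectively $\zeta(x)\tilde\chi_s(\omega,z)$) are dense in $H^1_{\rm per,av}(\Lambda)\otimes L^2(Q)^d$ (respectively $L^2_s(\Omega)\otimes L^2(\Lambda)\otimes L^2(Q)$). For each pure tensor, Lemma~\ref{L:unfh}(v) provides a recovery sequence $\tilde\varphi_L$ on $\Lambda_L$; setting $\varphi_L(\omega,z,x):=\zeta(x)\tilde\varphi_L(\omega,z)$ and using the strong $L^2$-convergence in $z$-direction supplied by Lemma~\ref{L:unfh}(v) yields, via the isometry of (i), the strong convergence required on $\Lambda\times Q$. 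A standard diagonal extraction upgrades the approximation to the general target.

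The main obstacle is the joint identification in (iii): one must verify that the weak limits can be written in the claimed factorized form with the correct measurable dependence on $x$. This relies critically on the uniqueness of the decomposition $\eta=P_s\nabla\varphi+\chi_s$ ensured by \ref{B0:pl:d} and Lemma~\ref{L:vector_pot}, so that the slicewise constructions can be glued together consistently without arbitrary choices of representatives.
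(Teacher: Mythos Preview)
Your approach is correct and matches the paper's intent exactly: the paper states that $\mathcal T_L$ is obtained by extending $\widehat{\mathcal T}_L$ with the additional $L^2(Q)$ factor and that ``the properties of $\widehat{\mathcal T}_L$ summarized in Lemma~\ref{L:unfh} are inherited by $\mathcal T_L$'', after which it declares the proof ``obvious and left to the reader''. Your reduction via the identification $\mathcal T_L=\widehat{\mathcal T}_L\otimes\operatorname{Id}_{L^2(Q)}$ and the slicing/density arguments for (iii) and (iv) spell out precisely this inheritance.
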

The proof is obvious and left to the reader.

\begin{lemma}[Compactness in the $z$ independent case]\label{L:comp_z_indep}
  Consider a bounded sequence $(u_L)\subset L^2(\Omega)\otimes L^2(Q)$. Then there exists $u\in L^2(Q)$ such that
  \begin{equation*}
    \mathbb E[u_L]\wto u\quad\text{weakly in }L^2(Q),\text{ and }\\
    \mathcal T_Lu_L\wto u\quad\text{weakly in }L^2(\Omega)\times L^2(\Lambda)\otimes L^2(Q).
  \end{equation*}
\end{lemma}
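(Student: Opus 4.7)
The strategy is to use the isometry of $\mathcal T_L$ to extract weakly convergent subsequences in both $L^2(\Omega)\otimes L^2(\Lambda)\otimes L^2(Q)$ and $L^2(\Omega)\otimes L^2(Q)$, and then to identify the limits by testing against tensor products $\varphi(\omega)\zeta(z)\xi(x)$, using the measure-preservation property \ref{S:stat} together with a weighted version of the ergodic theorem (Theorem~\ref{T:ergodic}).

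First, I would view $u_L$ as an element of $L^2(\Omega)\otimes L^2(\Lambda_L)\otimes L^2(Q)$ that is constant in $z$. Since $\mathcal T_L$ is an isometry by Lemma~\ref{lem:1204}(i), the sequence $(\mathcal T_L u_L)$ is bounded. After passing to a subsequence, I may assume $\mathcal T_L u_L \wto v$ in $L^2(\Omega)\otimes L^2(\Lambda)\otimes L^2(Q)$ (by Lemma~\ref{lem:1204}(ii)) and simultaneously $u_L \wto u^*$ in $L^2(\Omega)\otimes L^2(Q)$. Since taking expectation is a bounded linear operation, this yields $\mathbb E[u_L]\wto u:=\mathbb E[u^*]\in L^2(Q)$. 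The whole statement will follow if I can show $v(\omega,z,x)=u(x)$, because then the limit is independent of the choice of subsequence.

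For the identification, take $\varphi\in L^2(\Omega)$, $\zeta\in C(\overline{\Lambda})$, and $\xi\in L^2(Q)$. Splitting the integral $\int_\Lambda(\cdot)\,dz$ over the cubes $[z_0/L,(z_0{+}1)/L)^d$, $z_0\in\Lambda_L$, on which $\lfloor Lz\rfloor=z_0$, and using the change of variables $\omega\mapsto\tau_{z_0}\omega$ (which preserves $P$ by \ref{S:stat}), one obtains
\begin{equation*}
\mathbb E\int_\Lambda\int_Q (\mathcal T_L u_L)\varphi\,\zeta\,\xi\,dx\,dz = \mathbb E\!\left[f_L(\omega)\,g_L(\omega)\right]+o(1),
\end{equation*}
where $f_L(\omega):=\int_Q u_L(\omega,x)\xi(x)\,dx$ and $g_L(\omega):=L^{-d}\sum_{z_0\in\Lambda_L}\zeta(z_0/L)\varphi(\tau_{z_0}\omega)$; the $o(1)$ error comes from replacing $\zeta$ by its value at $z_0/L$ on each cube, which vanishes by uniform continuity of $\zeta$ combined with the uniform $L^2(\Omega)$ bound on $\varphi(\tau_{z_0}\cdot)$.

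It remains to pass to the limit in $\mathbb E[f_L g_L]$. From $u_L\wto u^*$ I have $f_L \wto \int_Q u^*(\cdot,x)\xi(x)\,dx$ weakly in $L^2(\Omega)$. A weighted version of the ergodic theorem (which follows from Theorem~\ref{T:ergodic} by approximating $\zeta$ in sup-norm by piecewise constant functions on a regular partition of $\Lambda$) gives $g_L \to \mathbb E[\varphi]\int_\Lambda \zeta\,dz$ strongly in $L^2(\Omega)$. By weak-strong convergence the product passes to the limit, yielding
\begin{equation*}
\mathbb E\int_\Lambda\!\int_Q v\,\varphi\,\zeta\,\xi = \bigg(\int_\Lambda\zeta\,dz\bigg)\mathbb E[\varphi]\int_Q u(x)\xi(x)\,dx = \mathbb E\int_\Lambda\!\int_Q u(x)\,\varphi(\omega)\,\zeta(z)\,\xi(x)\,dx\,dz,
\end{equation*}
so $v=u$ on a dense set of test functions, hence in $L^2(\Omega)\otimes L^2(\Lambda)\otimes L^2(Q)$. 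A standard subsequence-of-subsequence argument then upgrades the convergence to the full sequences. The only mild technical point is the weighted ergodic theorem, which is a routine extension of Theorem~\ref{T:ergodic} by density.
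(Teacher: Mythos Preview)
Your proof is correct and follows essentially the same route as the paper: extract weak subsequential limits, test against tensor products, rewrite the pairing via measure preservation as a product of $f_L(\omega)=\int_Q u_L\,\xi\,dx$ (weakly convergent) and a weighted ergodic average in $\varphi$ (strongly convergent), and pass to the limit by weak--strong pairing. The only cosmetic difference is that the paper keeps the $z$-integral $\int_\Lambda\psi(z)\varphi(\tau_{\lfloor Lz\rfloor}\omega)\,dz$ intact and invokes the von~Neumann ergodic theorem directly, so your discretization step and the accompanying $o(1)$ term are unnecessary but harmless.
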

\begin{proof}
  We pass to a subsequence such that $u_L\wto \tilde u$ and $\mathcal T_Lu_L\wto v$ weakly for some $\bar u\in L^2(\Omega)\otimes L^2(Q)$ and $v\in L^2(\Omega)\otimes L^2(\Lambda)\otimes L^2(Q)$. Since $u_L$ is independent of $z\in\Lambda_L$, we note that for all $z\in\Lambda$ we have
  \begin{equation*}
    \mathcal T_Lu_L(\omega,z,x)=u_L(\tau_{-\lfloor Lz\rfloor}\omega,x).
  \end{equation*}
  Hence,
  \begin{align*}
    &\mathbb E\big[\int_\Lambda\int_Q     \mathcal T_Lu_L(\omega,z,x)\eta(x)\psi(z)\varphi(\omega)\,dx\,dz\big]
      \,=\,\mathbb E\big[\big(\int_Q  u_L(\omega,x)\eta(x)\,dx\big)\int_\Lambda\psi(z)\varphi(\tau_{\lfloor Lz\rfloor}\omega)\big].
  \end{align*}
  By the von-Neumann ergodic theorem, we have $\int_\Lambda\psi(z)\varphi(\tau_{\lfloor Lz\rfloor}\omega)\,dz\to \mathbb E[\varphi]\int_\Lambda\psi(z)\,dz$ strongly in $L^2(\Omega)$, while $\int_Q  u_L\eta\,dx\wto \int_Q \tilde u\eta\,dx$ weakly in $L^2(\Omega)$. Hence, the right-hand side converges to $\mathbb E\big[\int_\Lambda\int_Q \mathbb E[\tilde u]\eta\psi\varphi\,dx\,dz\big]$. On the other hand, we may also pass to the limit on the left-hand side, since $\mathcal T_Lu_L\wto v$. We conclude that $v=\mathbb E[\tilde u]=:u$.
\end{proof}

\vspace{5pt}

\subsection{Proof of Theorem~\ref{T:hyst_hom}}
By Theorem~\ref{T:energetic_solution} the energetic solution $y_\hom$ of $(Y_\hom,\mathcal{E}_\hom,\rcal_\hom)$  satisfies for a.a.~$t\in[0,T]$ the force balance equation
\begin{equation}\label{eq:FBE_hom}
  0 \in D_{y}\E_\hom(t,y_\hom(t))+ \partial \rcal_\hom(\dot{y}_\hom(t)).
\end{equation}
In the following we use the notation $y_\hom=(u_\hom,p_\hom,\chi_{s,\hom})=(u_\hom,\hat y_\hom)$ and the factorization 
\begin{equation*}
  Y_\hom=H^1_0(Q)\times \widetilde Y_\hom,\qquad \widetilde Y_\hom:=L^2(\Omega)\otimes L^2_0(Q)^k\times L^2_s(\Omega)\otimes L^2(Q).
\end{equation*}

By definition, $\rcal_\hom(\dot y_\hom)$ only depends on the $\dot{\hat y}_\hom$-component, and thus
\begin{equation*}
  \partial\rcal_\hom(\dot y_\hom(t))=(0,\partial\widetilde{\rcal}_\hom(\dot{\hat y}_\hom(t)))^\top,
\end{equation*}
where $\widetilde{\rcal}_\hom:\widetilde Y_\hom\to[0,\infty]$ can be expressed by integrating the dissipation functional $\widehat{\rcal}_\hom$ introduced in \eqref{def:hat_ERIS}:
\begin{equation*}
  \widetilde{\rcal}_\hom(\dot{\hat y}_\hom)\colonequals  \widetilde{\rcal}_\hom(\dot p_\hom,\dot\chi_{s,\hom})\colonequals\int_Q\widehat{\rcal}_{\hom}\big(\dot{p}_\hom(\cdot,x)\big)\,dx.
\end{equation*}
Since the first component of $\partial\rcal_\hom(\dot y_\hom(t))$ is zero, \eqref{eq:FBE_hom} takes the form
\begin{subequations}
  \begin{alignat}{2}
    \label{eq:ERIS_hom_a}0 & = D_{u}\mathcal{E}_\hom(t, y_\hom(t)),\\
    \label{eq:ERIS_hom_b}0 & \in \partial\widetilde{\rcal}_\hom(\dot{\hat y}_\hom(t))+ D_{\hat y}\widetilde{\mathcal{E}}_\hom(t, \hat y_\hom(t)),
  \end{alignat}
\end{subequations}
where $\widetilde{\mathcal{E}}_\hom$ can  be expressed by integrating the energy functional $\widehat{\mathcal E}_{\hom}(\cdot;F(\cdot,x))$ of \eqref{def:hat_ERIS} with $F(t,x)={\rm sym}\nabla u_\hom(t,x)$:
\begin{align*}
  &\widetilde{\mathcal{E}}_\hom(t, \hat y_\hom(t)):=\widetilde{\mathcal{E}}_\hom(t, p_\hom(t),\chi_{s,\hom}(t))\\
  &\qquad :=\int_Q\widehat{\mathcal E}_{\hom}\Big(t,p_\hom(t,x),\chi_{s,\hom}(t,x);F(\cdot,x)\Big)\,dx.
\end{align*}
Testing \eqref{eq:ERIS_hom_a} with $\eta\in H^1_0(Q)$, we deduce that
\begin{align*}
  0=&\expect{D_{u}\E_\hom(t,y_\hom(t)),\eta}\\
    &\qquad =
  \int_Q\mathbb E\Big[A(\omega)
  \begin{pmatrix}
    P_s\nabla u_\hom(t,x)+    \chi_{s,\hom}(t,\omega,x)\\
    p_\hom(t,\omega,x)
  \end{pmatrix}\cdot
  \begin{pmatrix}
    P_s\nabla\eta(x)\\0
  \end{pmatrix}
  \Big]-l(t,x)\cdot\eta(x)\,dx\\
    &\qquad =\int_Q\sigma_\hom(t,x)\cdot\nabla\eta\,dx-\int_Ql(t,x)\cdot\eta(x)\,dx,
\end{align*}
with
\begin{equation*}
  \sigma_\hom(t,x):=(P_s^{*}\circ\pi_k)\left(\mathbb E\Big[A(\omega)
    \begin{pmatrix}
      P_s\nabla u_\hom(t,x)+\chi_{s,\hom}(t,\omega,x)\\
      p_\hom(t,\omega,x)
    \end{pmatrix}\Bigg]\right).
\end{equation*}
Hence, it remains to show that
\begin{equation}\label{eq:sigma_Whom}
  \sigma_\hom(t,x)=\mathcal W_\hom[{\rm sym}\nabla u_\hom(\cdot,x)](t).
\end{equation}
To that end, for $x\in Q$ let $\hat y(\cdot,x)$ denote the energetic solution with $\hat y(0,x)=0$ of the ERIS $(\widehat Y_{\hom},\widehat{\mathcal E}_{\hom}(\cdot;F(\cdot,x)),\widehat{\rcal}_\hom)$ of Section~\ref{S:hyst}.
Then $\hat y(\cdot,x)$ satisfies the force balance equation
\begin{equation*}
  0\in\partial\widehat{\rcal}_\hom(\dot{\hat y}(t,x))+D_{\hat y}\widehat{\mathcal E}_\hom(\hat y(t,x);F(t,x))\qquad\text{for a.a.~}t\in[0,T]\text{ and a.e. }x\in Q.
\end{equation*}
By integration in $x$ we obtain \eqref{eq:ERIS_hom_b}, and we deduce that $\hat y=\hat y_\hom$ by appealing to the uniqueness of energetic solutions.
Hence, thanks to the definition of $\mathcal W_\hom$ and $\sigma_\hom$, \eqref{eq:sigma_Whom} follows.
\qed

\subsection{Proof of Theorem~\ref{thm:1462:d}}
  The idea of the argument is to prove in Step~1--Step~4 that $y_L$ (after a suitable transformation) converges for $L\to\infty$ to a solution $w$ of an auxiliary ERIS $(\widetilde Y,\widetilde\E,\widetilde\rcal)$.
  The state space of the auxiliary ERIS is given by
  \begin{equation*}
    \widetilde{Y}=H^1_0(Q)^d\times \brac{L^2(\Omega)\otimes L^2(\Lambda)\otimes L^2(Q)}^k \times \brac{H^1_{\mathrm{per,av}}(\Lambda)\otimes L^2(Q)}^d\times \brac{L^2_s(\Omega)\otimes L^2(\Lambda)\otimes L^2(Q)},
  \end{equation*}
  and we use the notation $w=(u,p,\varphi,\chi_s)$ for the state variable.
  The energy functional is given by $\widetilde{\E}:[0,T]\times \widetilde Y\to \R$,
  \begin{equation*}
    \widetilde{\E}(t,w)=\mathbb E\Big[\int_{\Lambda}\int_{Q}A(\omega)\binom{P_s(\nabla u+\nabla_{z}\varphi)+\chi_s}{p}\cdot\binom{P_s(\nabla u+\nabla_{z}\varphi)+\chi_s}{p}\,dx\,dz\Big]-\int_{Q}l(t)\cdot u\, dx,
  \end{equation*}
  and the dissipation functional $\widetilde{\rcal}:\widetilde{Y}\to [0,\infty]$ is defined as
  \begin{equation*}
    \widetilde{\rcal}(\dot{w})=\expect{\int_{\Lambda}\int_{Q}\rho(\omega,\dot{p})\,dx\,dz}.
  \end{equation*}
  We note that $(\widetilde Y,\widetilde{\E},\widetilde{\rcal})$ is a quadratic ERIS in the sense of Definition~\ref{D:ERIS} as can be easily checked.
  We shall see that the auxiliary ERIS can be viewed as an extension of the homogenized ERIS $(\widehat Y_\hom, \widehat{\E}_\hom,\widehat{\rcal}_\hom)$.
  This is made precise in Step~6, where we show that energetic solutions to the homogenized ERIS are also energetic solutions to the auxiliary ERIS.
  This enables us to express $w$ (which is the limit of $(y_L)$) in terms $y_\hom$ and the claim of the theorem follows.

  \emph{Step 1 - Compactness.}
  From \eqref{eq:429:p} we conclude that the energetic solution $y_L$ satisfies the a priori estimate,
  \begin{equation}\label{eq:P:1}
    \|y_L(t)\|_{Y_L}\leq C\text{ and }\int_s^t\|y_L(\tau)\|_{Y_L}\,d\tau\leq C\int_s^t|\dot F(\tau)|\,d\tau\text{ for all }0\leq s\leq t\leq T,
  \end{equation}
  for a constant $C$ that is independent of $L$.
  With help of the map $\mathcal T_L$, we transform $y_L$ to a domain that is independent of $L$.
  To that end recall the notation $y_L=(u_L,p_L,\varphi_{L})$ and set
  \begin{equation}\label{eq:vL}
    v_L:=\Big(\mathcal T_L u_L,\mathcal T_L(P_s\nabla u_L),\mathcal T_Lp_L,L^{-1}\mathcal T_L\varphi_L,\mathcal T_L\nabla_s\varphi_L\Big)^\top.
  \end{equation}
  Note that the prefactor $L^{-1}$ accounts for the fact that $\|\cdot\|_{L^2_{\mathrm{av}}(\Lambda)}=\frac1L\|\cdot\|_{L^2(\Lambda_L)}$.
  Since $\mathcal T_L$ is an isometry, we have $v_L\in W^{1,1}((0,T);H)$ where $H:=(L^2(\Omega)\otimes L^2(\Lambda)\otimes L^2(Q))^{d+k+k+d+k}$, and \eqref{eq:P:1} turns into
  \begin{equation*}
    \sup_{t\in[0,T]}\|v_L(t)\|_{H}\leq C,\qquad \|v_L(t)-v_L(s)\|_{H}\leq C\int_s^t|\dot F(\tau)|\,d\tau\text{ for all }0\leq s\leq t\leq T.
  \end{equation*}
  Since $F\in W^{1,1}_o$, the sequence $(v_L)\subset C([0,L];H)$ is bounded and  pointwise equicontinuous.
  Since closed bounded balls in the Hilbert space $H$ are weakly sequentially compact, we may apply the Arzel\'a-Ascoli theorem.
  Thus, arguing as in the proof of \cite[Theorem~3.5.2]{mielke2015rate} we find a limit $v\in C([0,L];H)\cap W^{1,1}((0,L);H)$ and a subsequence (not relabeled) such that for all $t\in[0,T]$ we have
  \begin{equation*}
    v_L(t)\wto v(t)\qquad\text{weakly in }H,
  \end{equation*}
  and
  \begin{equation*}
    \|\dot v(t)\|_H\leq C|\dot F(t)|\text{ for a.a.~}t\in[0,T].
  \end{equation*}
  By applying Lemma~\ref{L:comp_z_indep} to $\big(\mathcal T_Lu_L,\mathcal T_L(P_s\nabla u_L)\big)$ and Lemma~\ref{lem:1204} to $\big(L^{-1}\mathcal T_L\varphi_L,\mathcal T_L(\nabla_{s}\varphi_L)\big)$,
  we see that the components of $v$ have a specific structure:
  \begin{equation}\label{eq:P:1:1}
    v=\big(u,P_s\nabla u,\,p,\,\varphi, P_s\nabla\varphi+\chi_s\big)^\top\qquad\text{with }w\colonequals (u,p,\varphi,\chi_s)\in W^{1,1}((0,T);\widetilde Y),
  \end{equation}
  where $\widetilde Y$ denotes the state space of the auxiliary ERIS.
  Our assumption on the convergence of the initial state $y_L^0$ implies that
  \begin{equation}\label{eq:P1:w0}
    w(0)=(u^0_\hom,p^0_\hom,0,\chi_{s,\hom}^0).
  \end{equation}

  \emph{Step 2. Stability.}
  We claim that the limit $w$ (see \eqref{eq:P:1:1}) is stable, in the sense that
  \begin{equation*}
    w(t)\in\widetilde S(t)\qquad\text{for all }t\in[0,T],
  \end{equation*}
  where $\widetilde S(t)$ denotes the set of stable states of the auxiliary ERIS. 
  For the argument we fix $t\in[0,T]$.
  Let $\tilde w=(\tilde u,\tilde p, \tilde\varphi,\tilde\chi_s)\in \widetilde Y$ be an arbitrary state.
  We need to show that
  \begin{equation*}
    \widetilde{\mathcal E}(t,w(t))\leq \widetilde{\mathcal E}(t,\tilde w)+\widetilde{\rcal}(\tilde w-w(t)).
  \end{equation*}
  In order to do so, we use the recovery sequence construction of Lemma \ref{lem:1204} to find a sequence $\psi_L\in (L^2(\Omega)\otimes L^2(\Lambda_L)\otimes L^2(Q))^d$ such that
  \begin{equation*}
    L^{-1}\mathcal T_L\psi_L\to \tilde\varphi - \varphi(t), \quad \mathcal T_L\nabla_{s,z}\tilde\psi_L \to P_s(\nabla_{z}(\tilde\varphi-\varphi(t)))+\tilde\chi_s-\chi_s(t) \quad \text{strongly in }L^2. 
  \end{equation*}
  Also, we set
  \begin{equation*}
    q_L(\omega,z,x)\colonequals L^d\int_{\frac1L(z+\Lambda)} \tilde p(\tau_z\omega,\tilde z,x)-p(t,\tau_z\omega,\tilde z,x)\,d\tilde z,
  \end{equation*}
  and note that this defines a sequence with  $q_L\in (L^2(\Omega)\otimes L^2(\Lambda_L)\otimes L^2(Q))^k$ and
  \begin{equation*}
    \mathcal T_Lq_L\to \tilde p-p(t) \quad \text{strongly in }L^2. 
  \end{equation*}
  We now define $\tilde y_L=(\tilde u_L,\tilde p_L,\tilde\varphi_L)\in Y_L$ componentwise:
  \begin{equation*}
    \tilde u_L=u_L(t)+\tilde u-u(t),\qquad \tilde p_L=p_L(t)+q_L,\qquad \tilde\varphi_L=\varphi_L(t)+\psi_L,
  \end{equation*}
  and consider
  \begin{equation*}
    \Psi^{\pm}_L:=\mathcal T_L\binom{P_s(\nabla(\tilde u_L\pm u_L(t))+\nabla_{s,z}(\tilde\varphi_L\pm\varphi_L(t)}{\tilde p_L\pm p_L(t)}.
  \end{equation*}
  By construction we have
  \begin{equation}\label{eq:P:1:weak:psi}
    \Psi^-_L\to \Psi^-\text{ strongly in $L^2$ and }    \Psi^+_L\wto \Psi^+\text{ weakly in $L^2$},
  \end{equation}
  where
  \begin{equation*}
    \Psi^{\pm}:=  \binom{P_s(\nabla(\tilde u \pm u(t))+P_s(\nabla_{z}(\tilde\varphi\pm\varphi(t))+\tilde\chi_s\pm\chi_s(t)}{\tilde p\pm p(t)}.
  \end{equation*}
  By expanding the quadratic energy functional $\mathcal E_L$ and by appealing to the isometry property of $\mathcal T_L$, we deduce that
  \begin{align}\label{quadr}
    \mathcal{E}_L(t, \tilde y_L)-\mathcal{E}_L(t,y_L(t))\,=\,\mathbb E\Big[\int_\Lambda\int_Q\tfrac12 A(\omega)\Psi^-_L\cdot\Psi^+_L\,dx\,dz\Big]-\mathbb E\big[\int_Q l(t) \cdot (u_L-u(t))\big].\nonumber
  \end{align}
  In view of \eqref{eq:P:1:weak:psi} we may pass to the limit $L\to \infty$.
  Using the quadratic expansion of $\widetilde{\mathcal E}$ we obtain
  \begin{equation}\label{quad:form}
    \lim\limits_{L\to\infty}\big(\mathcal{E}_L(t, \tilde y_L)-\mathcal{E}_L(t,y_L(t))\Big)\,=\,\mathbb E\Big[\int_\Lambda\int_Q\tfrac12 A(\omega)\Psi^-\cdot\Psi^+\,dx\Big]=\widetilde{\mathcal E}(t,\tilde w)-\widetilde{\mathcal E}(t,w(t)).
  \end{equation}
  Next, we consider the dissipation functional.
  By definition of $\rcal_L$, $\tilde y_L$, and $q_L$ we have
  \begin{equation*}
    \rcal_L(\tilde y_L-y_L(t))=\mathbb E\big[L^{-d}\sum_{z\in\Lambda_L}\int_Q\rho\Big(\omega, \dashint_{\frac{z+\Lambda}{L}}\delta p(\omega,\tilde z,x)\Big)\,d\tilde z\,dx\big],
  \end{equation*}
  where $\delta p(\omega,\tilde z,x)\colonequals \tilde p(\omega,\tilde z,x)-p(t,\omega,\tilde z,x)$.
  Since $\rho(\omega,\cdot)$ is convex, Jensen's inequality yields
  \begin{equation}\label{eq:P:1:rcal}
    \begin{aligned}
    \rcal_L(\tilde y_L-y_L(t))\leq&\,\mathbb E\big[L^{-d}\sum_{z\in\Lambda_L}\dashint_{\frac{z+\Lambda}{L}}\int_Q\rho\Big(\omega,\delta p(\omega,\tilde z,x)\Big)\,\,dxd\tilde z\big]\\
    =&\,\widetilde{\rcal}(\tilde w-w(t)).
  \end{aligned}
  \end{equation}
  Since $y_L$ is a energetic solution, we have $\rcal_L(\tilde y_L-y_L(t))\geq \mathcal E_L(t,y_L(t))-\mathcal E_L(t,\tilde y_L)$.
  Combined with \eqref{eq:P:1:rcal} and \eqref{quad:form} we obtain
  \begin{equation*}
    \widetilde\rcal(\tilde w-w(t))\geq \widetilde{\mathcal E}(t,\tilde w)-\widetilde{\mathcal E}(t,w(t)),
  \end{equation*}
  and thus $w(t)\in\widetilde S(t)$.

  \emph{Step 3. Energy balance.}
  We claim that the limit $w$ of Step~1 satisfies the global energy balance inequality
  \begin{equation}\label{eb:final}
    \widetilde{\mathcal{E}}(t,w(t))+\int_0^t \widetilde{\rcal}(\dot{w}(s))ds \le \widetilde{\mathcal{E}}(0,w(0))-\int_0^t \int_Q \dot{l}(s)\cdot u(s) ds.
  \end{equation}
  Since $y_L$ is an energetic solution, it satisfies the energy balance inequality
  \begin{equation*}
    \mathcal{E}_L(t,y_L(t))+\int_0^t \rcal_L(\dot{y}_L(s))ds \le \mathcal{E}_L(0,y_L(0))-\int_0^t \mathbb E\Big[\int_Q \dot{l}(s)\cdot u_L(s)\Big] ds.   
  \end{equation*}
  Hence, to conclude \eqref{eb:final} it suffices to prove
  \begin{align}
    \label{eq:EB:1}
    &\liminf\limits_{L\to\infty}\mathcal E_L(t,y_L(t))\geq \widetilde{\mathcal E}(t,w(t)),\\
    \label{eq:EB:2}
    &\liminf\limits_{L\to\infty}\int_0^t\rcal_L(\dot y_L(s))\,ds\geq \int_0^t\widetilde{\rcal}(\dot w(s))\,ds,\\
    \label{eq:EB:3}
    &\lim\limits_{L\to\infty}\Big(\mathcal E_L(0,y_L(0))-\int_0^t\mathbb E\Big[\int_Q\dot l(s)\cdot u_L(s)\Big]\,ds\Big)
    =\widetilde{\mathcal E}(0,w(0))-\int_0^t\Big[\int_Q\dot l(s)\cdot u(s)\Big]\,ds.
  \end{align}
  We start with \eqref{eq:EB:1}.
  With help of the operator $\mathcal T_L$ and with the shorthand notation
  \begin{equation*}
    \Phi_L(t)\colonequals \mathcal T_L\binom{P_s(\nabla u_L(t))+\nabla_{s,z}\varphi_L(t)}{p_L(t)},
  \end{equation*}
  we have
  \begin{equation}\label{eq:P:1:EBEL2}
    \mathcal E_L(t,y_L(t))=\mathbb E[\int_{\Lambda}\int_Q\frac12 A(\omega)\Phi_L(t)\cdot\Phi_L(t)\,dx\,dz\Big]-\mathbb E\Big[\int_{\Lambda}\int_Ql(t)\cdot \mathcal T_Lu_L(t)\Big].
  \end{equation}
  By Step~1 we have
  \begin{equation}\label{eq:P:1:EBEL2b}
    \mathcal T_Lu_L(t)\wto u(t),\qquad \Phi_L(t)\wto \Phi(t)\colonequals \binom{P_s(\nabla u(t)+\nabla_{z}\varphi(t))+\chi_s(t)}{p(t)}\qquad\text{weakly in }L^2,
  \end{equation}
  where $u(t)$ is independent of $\omega$ and $z$.
  Hence, by weak lower-semicontinuity of convex integral functionals we obtain
  \begin{equation*}
    \liminf\limits_{L\to\infty}\mathcal E_L(t,y_L(t))\geq \mathbb E[\int_{\Lambda}\int_Q\frac12 A(\omega)\Phi(t)\cdot\Phi(t)\,dx\,dz\Big]-\int_Ql(t)\cdot u(t)=\widetilde{\mathcal E}(t,w(t)).
  \end{equation*}
  The argument for \eqref{eq:EB:3} is similar: By assumption on the initial data, we have $\Phi_L(0)\to\Phi(0)$ strongly $L^2$, which upgrades \eqref{eq:P:1:EBEL2b}.
  As a consequence we may pass in \eqref{eq:P:1:EBEL2} to the limit (and not only the limit inferior) and thus conclude \eqref{eq:EB:3}.

  We finally discuss \eqref{eq:EB:2}.
  To that end we consider a (finite) partition  $\cb{t_i}_{i\in I}$ of $[0,t]$ and claim that
  \begin{equation}\label{eq:EB:2:1}
    \sum_{i\in I} \widetilde{\rcal}(w(t_{i})-w(t_{{i-1}}))\leq \liminf_{L\to\infty} \sum_{i\in I}\rcal_L(y_L(t_{i})-y_L(t_{{i-1}})).
  \end{equation}
  To see this, we first note that thanks to the definition of $\mathcal T_L$ we have
  \begin{equation*}
    \rcal_L(y_L(t_{i})-y_L(t_{{i-1}}))=\mathbb E\Big[\int_\Lambda\int_Q\rho\Big(\omega,\mathcal T_L(p_L(t_i)-p_L(t_{i-1})\Big)\Big].
  \end{equation*}
  Furthermore, from Step~1 we deduce that $\mathcal T_L(p_L(t_i)-p_L(t_{i-1}))\wto p(t_i)-p(t_i)$ weakly $L^2$.
  Hence, \eqref{eq:EB:2:1} follows from the weak lower semicontinuity of the convex integral functional $\mathbb E\big[\int_{\Lambda}\int_Q \rho(\omega,\cdot)\big]$.
  Taking the supremum over all partitions $\cb{t_i}_{i\in I}$ of $[0,t]$ in \eqref{eq:EB:2:1}, and by exploiting the 1-homogeneity of $\widetilde{\rcal}$ and the growth condition from \ref{B2:pl:d}, we obtain \eqref{eq:EB:2}.
  \medskip

  \emph{Step 4. Energetic solution and uniqueness}.
  We may combine the energy balance inequality \eqref{eb:final} with the stability of $w$ obtained in Step~2.
  By appealing to the general argument of \cite[Theorem 4.4]{mainik2005existence} we conclude that \eqref{eb:final} also holds with $\leq$ replaced by $\geq$.
  Hence, $w$ satisfies the global energy balance equality \eqref{eq:413:p} and we conclude that $w$ is an energetic solution of the ERIS  $\brac{\widetilde{Y},\widetilde{\E},\widetilde{\rcal}}$ with $w(0)=\brac{u^0,p^0,0,\chi^0}$.
  Since solutions to the ERIS are unique, we conclude that convergences of Step~1 hold for the entire sequence.
  Furthermore, we conclude convergence of the energy
  \begin{equation}\label{eq:669:a}
    \E_L(t,y_L(t))\to \widetilde{\E}(t,w(t)).
  \end{equation}
  by an argument similar to \cite[Proof of Theorem 4.10]{neukamm2017stochastic}.
  \medskip

  \emph{Step 5. Strong convergence.}
  From Step~1 recall the definition of $v_L$ (see \eqref{eq:vL}) and the identities
  \begin{equation*}
    v=(u,P_s\nabla u,p,\varphi,P_s\nabla_z\varphi+\chi_s)^\top\text{ and }w=(u,p,\varphi,\chi_s).
  \end{equation*}
  We claim that for all $t\in[0,T]$ we have
  \begin{equation}\label{eq:strong_vL}
    \lim\limits_{L\to\infty}\|v_L(t)-v(t)\|_H=0.
  \end{equation}
  Here comes the argument: Proceeding as in Step~3, we find for $w(t)$ a strong recovery sequence $\tilde y_L=\brac{\tilde u_L,\tilde p_L,\tilde\varphi_L} \in Y_L$ such that $$\tilde v_L=\brac{\mathcal T_L\tilde u_L,\mathcal T_L(P_s\nabla \tilde u_L),\mathcal T_L\tilde p_L,L^{-1}\mathcal T_L\tilde\varphi_L,\mathcal T_L(\nabla_{s,z}\tilde\varphi_L)}^\top$$ satisfies
\begin{equation*}
  \tilde v_L\to v(t) \quad \text{strongly in }H.
\end{equation*}
By the triangle inequality, we have 
\begin{equation}\label{inequality123}
  \|v_L(t) - v(t)\|_H \leq \|v_L(t) - \tilde v_L\|_H+\|\tilde v_L - v(t)\|_H.  
\end{equation}
The second term on the right-hand side vanishes in the limit $L\to\infty$.
Also, since the energy is coercive, we obtain, using the isometry property of $\mathcal T_L$ and a discrete Poincar{\'e}-Korn inequality,
\begin{equation}\label{eq:P1:331}
  \frac1{c} \|v_L(t) - \tilde v_L\|^2_H  \leq  \mathcal{E}_L(t,y_L(t)-\tilde y_L)+\mathbb E\Big[\int_Ql(t)\cdot (u_L(t)-\tilde u_L)\,dx\Big]
\end{equation}
with a constant $c$ that is independent of $L$.
By expanding the quadratic term in the energy functional in the sense of the binomial identity ``$(a-b)^2=a^2-b^2-2b(a-b)$'', we deduce that
\begin{equation}\label{eq:P1:332}
  \mathcal{E}_L(t,y_L(t)-\tilde y_L)=\mathcal{E}_L(t,y_L(t))-\mathcal{E}_L(t,\tilde y_L)-\mathbb E\Big[\int_\Lambda\int_QA(\omega)\widetilde\Psi_L\cdot(\Psi_L-\widetilde\Psi_L)\,dx\,dz\Big],
\end{equation}
where
\begin{equation*}
  \widetilde\Psi_L\colonequals \mathcal T_L\binom{P_s\nabla\tilde u_L+\nabla_{s,z}\tilde\varphi_L}{\tilde p_L},\qquad \Psi_L\colonequals\mathcal T_L\binom{P_s\nabla u_L(t)+\nabla_{s,z}\varphi_L(t)}{p_L(t)}
\end{equation*}
We claim that for $L\to\infty$ the right-hand side of \eqref{eq:P1:332} converges to $0$.
By construction, as $L\to\infty$, the sequence $A(\omega)\widetilde\Psi_L$ strongly converges in $L^2$, while $\Psi_L-\widetilde\Psi_L\wto 0$ weakly in $L^2$.
Hence, the last term on the right-hand side of \eqref{eq:P1:332} converges to $0$.
Similarly, the second term on the right-hand of \eqref{eq:P1:332} can be expressed as
\begin{equation*}
  \mathcal{E}_L(t,\tilde y_L)=\mathbb E\Big[\int_\Lambda\int_Q\tfrac12 A(\omega)\widetilde\Psi_L\cdot \widetilde\Psi_L\,dx\,dz\Big]-\mathbb E\Big[\int_Ql(t)\cdot\tilde u_L\Big],
\end{equation*}
and thus converges to $\widetilde{\E}(t,w(t))$.
Since the latter is also the limit of the first term on the right-hand side of \eqref{eq:P1:332}, see \eqref{eq:669:a}, we conclude that \eqref{eq:P1:332} indeed vanishes for $L\to\infty$.
Hence, \eqref{eq:P1:332} and the convergence $u_L(t)-\tilde u_L\wto 0$ implies that $\|v_L(t)-v(t)\|_H\to 0$.
\medskip

\emph{Step 6. Conclusion.}
To conclude the proof we need to exploit the relation between the auxiliary and the homogenized ERIS. To that end we consider the map ,
\begin{equation*}
  \iota:Y_{\hom}\to \widetilde Y,\qquad  \iota(\bar u, \bar p,\bar\chi_s):=(\bar u,\bar p,0,\bar\chi_s),
\end{equation*}
and note that this defines a linear (nonsurjective) isometry. From the definition of $\widetilde{\E}$ it directly follows that
\begin{equation}\label{eq:identification}
  \E_{\hom}(t,\bar y)=\widetilde{\E}(t,\iota(\bar y))\text{ and }\rcal_{\hom}(\bar y)=\widetilde\rcal(\iota(\bar y))\text{ for all $\bar y\in Y_{\hom}$ and all $t\in[0,T]$}.
\end{equation}
Next, we claim that $y_{\hom}^0=(u_\hom^0,p_\hom^0,\chi_{s,\hom}^0)$ is a stable state for the homogenized ERIS. Indeed, by \eqref{eq:P1:w0} and Step~2 we have $\iota(y_\hom^0)=w(0)\in\widetilde S(0)$ (where the latter denotes the set of stable states of the auxiliary ERIS). In view of \eqref{eq:identification} this implies that $y_{\hom}^0\in S_{\hom}(0)$. Since $y_{\hom}^0$ is a stable initial state, the homogenized ERIS  $\brac{Y_\hom,\E_\hom,\rcal_\hom}$ admits an energetic solution $y_\hom=(u_\hom,p_\hom,\chi_{s,\hom})$ with $y_{\hom}(0)=y^0_{\hom}$. We claim that $w_\hom:=\iota(y_{\hom})$ is an energetic solution to the auxiliary ERIS  $\brac{\widetilde{Y},\widetilde{\E},\widetilde{\rcal}}$ with $w_\hom(0)=\iota(y_{\hom}(0))$.

We first note that the global energy balance equality for $y_{\hom}$ combined with \eqref{eq:identification} implies that $w_{\hom}$ satisfies the global energy balance equality as well, i.e.,
\begin{equation*}
  \widetilde{\E}(t,w_\hom(t))+\int_0^t\widetilde{\rcal}(\dot w_\hom(s))\,ds=  \widetilde{\E}(0,w_\hom(0))-\int_0^t\int_Q\dot l(s)\cdot u_{\hom}(s)\,dx\,ds.
\end{equation*}
Hence, it remains to check stability, i.e., $w_\hom(t)\in \widetilde S(t)$ for all $t\in[0,T]$. To that end let $\tilde w=(\tilde u,\tilde p,\tilde\varphi,\tilde\chi_s)\in \widetilde Y$ be an arbitrary state. Then $\bar y\colonequals\brac{\tilde u,\int_\Lambda\tilde p\,dz,\int_\Lambda\tilde\chi_s\,dz}\in Y_\hom$ and since $y_{\hom}(t)\in S_{\hom}(t)$ we have
\begin{equation}\label{eq:Stabhom}
  \E_\hom(t,y_{\hom}(t))\leq \,\E_{\hom}(t,\bar y)+\rcal_\hom(\bar y-y_{\hom}(t)).
\end{equation}
Note that by periodicity of $\tilde\varphi$ in $z$ we have
\begin{equation}\label{eq:integrate1231213}
  \int_\Lambda\tilde\chi_s\,dz=\int_\Lambda P_s\nabla_z\tilde\varphi+\tilde\chi_s\,dz,
\end{equation}
and thus 
\begin{align*}
  \E_{\hom}(t,\bar y)\,=\,&\mathbb E\Big[\int_Q\tfrac12 A(\omega)\binom{P_s\nabla\tilde u+\int_\Lambda P_s\nabla\tilde\varphi+\tilde\chi_s\,dz}{\int_\Lambda\tilde p\,dz}\cdot\binom{P_s\nabla\tilde u+\int_\Lambda P_s\nabla\tilde\varphi+\tilde\chi_s\,dz}{\int_\Lambda\tilde p\,dz}\Big]\\&\,-\int_Ql(t)\cdot \tilde u\,dx
  \,\leq\,\widetilde{\E}(t,\tilde w(t)),
\end{align*}
where the last bound holds by Jensen's inequality. Likewise, by Jensen's inequality we have
\begin{equation*}
  \rcal_\hom(\bar y-y_{\hom}(t))=\mathbb E\Big[\int_Q\rho\big(\omega, \int_\Lambda\tilde p-p_\hom(t)\,dz\big)\,dx\Big]\leq \widetilde\rcal(\tilde w-w_\hom(t)).
\end{equation*}
By combining the previous two bounds with \eqref{eq:Stabhom} and the identity $\widetilde{\E}(t,w_{\hom}(t))=\widetilde{\E}(t,\iota(y_{\hom}(t))=\E_{\hom}(t,y_{\hom}(t))$ (cf.~\eqref{eq:identification}), we conclude $w_{\hom}(t)\in\widetilde S(t)$.

Finally, since solutions to the auxiliary ERIS are unique and since $w_{\hom}(0)=w(0)$, we conclude that $w=w_\hom=\iota(y_{\hom})$. Combined with the strong convergence \eqref{eq:strong_vL} we get
\begin{align*}
  &\|\mathcal T_L(u_L(t))-u_{\hom}(t)\|+  \|\mathcal T_L(P_s\nabla u_L(t))-P_s\nabla u_{\hom}(t)\|\\
  &\qquad +\|\mathcal T_Lp_L(t)-p_{\hom}(t)\|+\|\mathcal T_L\varphi_L(t)\|+\|\mathcal T_L\nabla_{s,z}\varphi_L(t)-\chi_{s,\hom}(t)\|\to 0,
\end{align*}
where $\|\cdot\|\colonequals\|\cdot\|_{L^2(\Omega)\otimes L^2(\Lambda)\otimes L^2(Q)}$. Since $u_{\hom}(t)$ only depends on $x\in Q$ and $u_L(t)$ only on $(\omega,x)\in\Omega\times Q$, we conclude that $\|\mathcal T_L(u_L(t))-u_{\hom}(t)\|+  \|\mathcal T_L(P_s\nabla u_L(t))-P_s\nabla u_{\hom}(t)\|=\|u(t)-u_{\hom}(t)\|_{L^2(\Omega)\otimes H^1(Q)}$, and thus the the claimed convergences follow.
\qed

\subsection{Proof of Theorem~\ref{T:RVE-W}}
We first introduce an ERIS $(\widehat Y_L,\widehat{\E}_L(\cdot;F),\widehat{\rcal}_L)$ which is the  \emph{mean} version of the \emph{quenched} ERIS $(\widehat Y_L^\omega,\widehat{\E}_L^\omega(\cdot;F),\widehat{\rcal}_L^\omega)$. The latter is used to define $\mathcal W^\omega_L$. The mean ERIS is obtained from the quenched one by extending the state space of the quenched ERIS to $L^2(\Omega)\otimes \widehat{Y}^\omega_L$, and by taking the expectation of the energy and dissipation functional. More precisely, we define
  \begin{align*}
    \widehat Y_L\,\colonequals\,&(L^2(\Omega)\otimes L^2(\Lambda_L))^k\times(L^2(\Omega)\otimes L^2_{\mathrm{av}}(\Lambda_L))^d,\\
    \widehat\E_L(t,\hat y_L;F)\,\colonequals\,&\mathbb E\Big[\widehat\E_L^\omega(t,\hat y_L(\omega,\cdot;F)\Big],\\
    \widehat\rcal_L(\hat y_L)\,\colonequals\,&\mathbb E\Big[\widehat{\rcal}_L^\omega(\hat y_L(\omega,\cdot),
  \end{align*}
  where we use the notation $\hat y_L=(\hat p_L,\hat\varphi_L)$ for state variables in $\widehat Y_L$.
  One can easily check that $(\widehat Y_L,\widehat{\E}_L(\cdot;F),\widehat{\rcal}_L)$ is indeed a quadratic ERIS in the sense of Definition~\ref{D:ERIS}.
  In the following we denote by
  \begin{subequations}
  \begin{align}
    &\text{$\hat y^\omega_L=(\hat p^\omega_L,\hat\varphi^\omega_L)$ the energetic solution with $\hat y^\omega_L(0)=0$ to $(\widehat Y_L^\omega,\widehat{\E}_L^\omega(\cdot;F),\widehat{\rcal}_L^\omega)$,}\\
    &\text{$\hat y_L=(\hat p_L,\hat\varphi_L)$ the energetic solution with $\hat y_L(0)=0$ to $(\widehat Y_L,\widehat{\E}_L(\cdot;F),\widehat{\rcal}_L)$,}\\
    \label{eq:haty}
    &\text{$\hat y_\hom=(\hat p_\hom,\hat\chi_{s,\hom})$ the energetic solution with $\hat y_\hom(0)=0$}\\\nonumber
    &\qquad\qquad\qquad\qquad\qquad\text{ to $(\widehat Y_\hom,\widehat{\E}_\hom(\cdot;F),\widehat{\rcal}_\hom)$ see~Definition~\ref{D:Whom}.}    
  \end{align}
\end{subequations}
  Note that by construction and the uniqueness of energetic solutions we have  $\hat y_L(t,\omega)=\hat y_L^\omega(t)$ for $P$-a.e.~$\omega\in\Omega$ and all $t$. Hence, by the definitions of $\mathcal W^\omega_L[F]$ and $\mathcal W_{\hom}[F]$, and since $P_s^{*}\circ\pi_k:\R^{2k}\to\R^{d\times d}_{\rm sym}$ is linear and bounded, the claim of the theorem can be reformulated as
  \begin{align}\label{eq:P2:claim}
    &\lim\limits_{L\to\infty}\mathbb E\Big[\Big|L^{-d}\sum_{z\in\Lambda_L}\Xi_L(z)-\mathbb E\Big[\Xi_{\hom}\Big]\Big|^2\Big]=0,\qquad\text{where }\\\nonumber
    &\Xi_L(\omega,z)\colonequals A(\tau_z\omega)
    \binom{P_s(F(t))+\nabla_s \hat\varphi_L(t)}{\hat p_L(t)},\quad \Xi_{\hom}(\omega)\colonequals A(\omega)
    \binom{P_s(F(t))+\hat\chi_{s,\hom}(t,\omega)}{\hat p_{\hom}(t,\omega)}.
  \end{align}
  To prove \eqref{eq:P2:claim} we first show in Step~1 (closely following the proof of Theorem~\ref{thm:1462:d}) that $\hat y_L$ converges (after some transformation) to the energetic solution $\hat y_{\hom}$ of $(\widehat Y_\hom,\widehat{\E}_\hom(\cdot;F),\widehat{\rcal}_\hom)$.
  From that we deduce in Step~2 that $L^{-d}\sum_{z\in\Lambda_L}\Xi_L\wto \mathbb E\Big[\Xi_{\hom}\Big]$ weakly in $L^2(\Omega)$. 
  We then exploit the ergodic theorem to upgrade the convergence to strong convergence in $L^2(\Omega)$, which completes the proof.
  \medskip

  \noindent
  \emph{Step 1. Convergence of $\hat y_L$ to $\hat y_{\hom}$.}
  Consider
  \begin{equation*}
    v_L:=\big(\widehat{\mathcal T}_L\hat p_L,L^{-1}\widehat{\mathcal T}_L\hat\varphi_L,\widehat{\mathcal T}_L\nabla_s\hat\varphi_L\big)^\top,
  \end{equation*}
  which is a function in $W^{1,1}((0,T);H)$ with $H=L^2(\Omega)\otimes L^2(\Lambda))^{k+d+k}$. We claim that
  \begin{equation}\label{eq:P2:1}
    v_L(t)\to (\hat p_\hom(t),0,\hat\chi_{s,\hom}(t))\qquad\text{strongly in }H\text{ for all }t\in[0,T],
  \end{equation}
  where $\hat p_\hom$ and $\hat\chi_{s,\hom}$ are the components of the energetic solution $\hat y_\hom$, see \eqref{eq:haty}. This can been seen by following step by step the proof of Theorem~\ref{thm:1462:d}, where as auxiliary ERIS $(\widetilde Y,\widetilde{\E},\widetilde{\rcal})$ we consider
  \begin{align*}
    \widetilde Y\colonequals\,&\,(L^2(\Omega)\otimes L^2(\Lambda))^k\times H^1_{\mathrm{per,av}}(\Lambda)^d\times L^2_s(\Omega)\otimes L^2(\Lambda),\\
    \widetilde{\E}(t,\tilde y;F)\colonequals\,&\,\mathbb E\Big[\int_\Lambda\tfrac12A(\omega)\binom{P_s\nabla\tilde\varphi+\tilde\chi_s}{\tilde p}\cdot\binom{P_s\nabla\tilde\varphi+\tilde\chi_s}{\tilde p}\,dx\Big]\\
                              &\qquad-\mathbb E\Big[\int_\Lambda A(\omega)\binom{P_s\nabla\tilde\varphi+\tilde\chi_s}{\tilde p}\cdot\binom{P_sF(t)}{0}\,dx\Big],\\
    \widetilde\rcal(\dot{\tilde y})\colonequals\,&\,\mathbb E\Big[\int_\Lambda\rho(\omega,\dot{\tilde p})\,dx\Big],
  \end{align*}
  with $\tilde y=(\tilde p,\tilde\varphi,\tilde\chi_s)$.
  Since the required modifications are obvious, we leave the details to the reader to avoid repetition.
  \medskip

  \noindent
  \emph{Step 2. Convergence of $\Xi_L$.}
  Consider
  \begin{equation*}
    \Psi_L(\omega,z)\colonequals\binom{\nabla_s\hat \varphi_L(t,\omega,z)}{\hat p_{L}(t,\omega,z)},\qquad \Psi_{\hom}(\omega) \colonequals\binom{\hat\chi_{s,\hom}(t,\omega)}{\hat p_\hom(t,\omega)}.
  \end{equation*}
  Then \eqref{eq:P2:1} implies that $\widehat{\mathcal T}_L\Psi_L\to \Psi_{\hom}$ strongly in $L^2(\Omega)\times L^2(\Lambda)^{2k}$.
  Consider $\overline\Psi_{\hom}:\Omega\times\Z^d\to\R^{2k}$, $\overline\Psi_{\hom}(\omega,z):=\Psi_{\hom}(\tau_z\omega)$, and note that $\widehat{\mathcal T}_L\overline\Psi_\hom(\omega,x)=\Psi_\hom(\omega)$. Hence, by the isometry property of $\widehat{\mathcal T}_L$ we conclude that
  \begin{equation}\label{eq:P2:111}
    \|\Psi_L-\overline\Psi_\hom\|_{L^2(\Omega)\otimes L^2(\Lambda_L)}=\|\widehat{\mathcal T}_L(\Psi_L-\overline\Psi_{\hom})\|^2_{L^2(\Omega)\otimes L^2(\Lambda)}=\|\widehat{\mathcal T}_L\Psi_L-\Psi_{\hom}\|^2_{L^2(\Omega)\otimes L^2(\Lambda)}\to 0.
  \end{equation}
  By the definition of $\Xi_L$ we have
  \begin{equation}\label{eq:P2:decomp}
    \begin{aligned}
    \frac{1}{L^{d}}\sum_{z\in\Lambda_L}\Xi_L(\omega,z)=\,&\,\frac{1}{L^d}\sum_{z\in\Z^d}A(\tau_z\omega)\Psi_L(\omega,z)\\
                                                         =\,&
                                                            \frac{1}{L^d}\sum_{z\in\Z^d}A(\tau_z\omega)\overline\Psi_\hom(\omega,z)+\frac{1}{L^d}\sum_{z\in\Z^d}A(\tau_z\omega)\big(\Psi_L(\omega,z)-\overline\Psi_\hom(\omega,z)\big).
                                                          \end{aligned}
  \end{equation}
  Since $(\omega,z)\mapsto A(\tau_z\omega)\overline\Psi_{\hom}(\omega,z)$ is stationary, the ergodic Theorem~\ref{T:ergodic}) implies
  \begin{equation*}
    \frac{1}{L^2}\sum_{z\in\Z^d}A(\tau_z\omega)\overline\Psi_\hom(\omega,z)\to \mathbb E\Big[A(\tau_z\omega)\overline\Psi_\hom(\omega,z)\Big]=\mathbb E\Big[\Xi_\hom\Big],
  \end{equation*}
  strongly in $L^2(\Omega)$. Thus, it remains to show that the second term on the right-hand side of \eqref{eq:P2:decomp} converges to $0$ in $L^2(\Omega)$. But this directly follows from \eqref{eq:P2:111}, since
  \begin{equation*}
    \mathbb E\Big[\Big|\frac{1}{L^d}\sum_{z\in\Z^d}A(\tau_z\omega)\big(\Psi_L(\omega,z)-\overline\Psi_\hom(\omega,z)\big)\Big|^2\Big]\leq C\frac{1}{L^d}\sum_{z\in\Z^d}    \mathbb E\Big[|\Psi_L(\omega,z)-\overline\Psi_{\hom}(\omega,z)|^2\Big],
  \end{equation*}
  where $C>0$ is a constant only depending on $A$, and since the term on the right-hand is $C\|\Psi_L-\overline\Psi_{\hom}\|_{L^2(\Omega)\otimes L^2(\Lambda_L)}$.
\qed

%%% Local Variables:
%%% mode: latex
%%% TeX-master: "rve_plasticity"
%%% End:

\section*{Acknowledgments}
SN and OS acknowledge support by the German Research Foundation (DFG) via the
research unit FOR 3013 ``Vector- and tensor-valued surface PDEs'' (grant no.~NE2138/3-1).
SH, SN and MV acknowledge support by the German Research Foundation (DFG) – project number 405009441.

%\nocite{*}
\printbibliography

%\newpage
%\input{oldtext.tex}

\appendix
\section{Quadratic evolutionary rate-independent systems}
\label{sec:eris}

We recall the basic definition and existence result of solutions for evolutionary rate-independent systems with quadratic energy. For a detailed review of the theory of ERIS we refer to \cite{mielke2015rate}. In the paper we consider various quadratic ERIS, see Table~\ref{T:ERIS}.

\begin{definition}[quadratic ERIS, cf.~\mbox{\cite[Section 3.5]{mielke2015rate}}]\label{D:ERIS}
  We call $(Y,\mathcal E,\mathcal R)$ a \emph{quadratic evolutionary rate-independent system}, if
  \begin{enumerate}
  \item[(a)] the \emph{state space} $Y$ is a separable Hilbert space with dual space $Y^*$ and dual pairing $\expect{\cdot,\cdot}$,
  \item[(b)] the \emph{dissipation functional} $\rcal: Y\rightarrow [0,\infty]$ is convex, lower semicontinuous and positively homogeneous of degree $1$, i.e., $\rcal(\alpha \dot y)=\alpha \rcal(\dot y)$ for all $\alpha \ge 0$ and $\dot y\in Y$, and $\rcal(0)=0$,
  \item[(c)] the \emph{energy functional} $\E:[0,T] \times Y \to \R$ is of the form
    \begin{equation}
      \E(t,y) \colonequals \frac{1}{2}\expect{\mathcal Ay,y}- \expect{\ell(t),y},\label{eq:375:p}
    \end{equation}
    where $\mathcal A:Y\to Y^*$ is a symmetric, positive definite, bounded linear operator, and $\ell \in W^{1,1}((0,T);Y^*)$.
  \end{enumerate}
  The set 
  \begin{equation*}
    S(t)
    \colonequals
    \Big\{y\in Y:\; \E(t,y)\leq \E(t,\widetilde{y})+\rcal\brac{\widetilde{y}-y} \quad \text{for all }\widetilde{y}\in Y\Big\}.
  \end{equation*}
  defined for $t\in[0,T]$ is called the \emph{set of stable states}.
\end{definition}

\begin{theorem}[Energetic solution; see \mbox{\cite[Theorem 3.5.2]{mielke2015rate}}]\label{T:energetic_solution}
  Let $(Y,\mathcal E, \rcal)$ be a quadratic ERIS in the sense of Definition~\ref{D:ERIS}. Then for any initial condition $y^0\in S(0)$ there exists a unique $y\in W^{1,1}((0,T);Y)$ with $y(0)=y^0$ such that for all $t\in[0,T]$ the \emph{global energy balance equality},
  \begin{align}
    & \E(t,y(t))+\int_{0}^t \rcal(\dot{y}(s))ds = \E(0,y(0))- \int_{0}^{t}\expect{\dot{\ell}(s),y(s)} ds, \tag{E} \label{eq:413:p}
  \end{align}
  and the \emph{global stability condition}
  \begin{equation*}
    y(t)\in S(t)\qquad \tag{S}\label{eq:414:p}
  \end{equation*}
  holds. Moreover, we have the estimate
  \begin{equation}\label{eq:429:p}
    \|\dot y(t)\|_Y\leq \frac1c\|\dot\ell(t)\|_{Y^*}\qquad\text{for a.a.~}t\in[0,T],
  \end{equation}
  where  $c\colonequals \inf\big\{\expect{\mathcal Ay,y}\,:\,y\in Y,\,\|y\|_Y=1\big\}$ denotes the coercivity constant of $\mathcal A$.
  % If $\ell$ is Lipschitz, i.e., $\|\dot\ell\|_{L^\infty((0,T);Y^*)}<\infty$, then $y$ satisfies the Lipschitz estimate
  % \begin{equation}\label{eq:429:p}
  %   \norm{y(t)-y(s)}_Y \leq \frac{1}{c}\|{\dot{\ell}}\|_{L^{\infty}(0,T;Y^*)} |t-s| \quad \text{for all }t,s \in [0,T],
  % \end{equation}
  % where 
\end{theorem}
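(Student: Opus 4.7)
The plan is to follow the classical minimizing-movements strategy of \cite{mielke2015rate}, exploiting the quadratic/convex structure to secure both existence and uniqueness. First I would discretize in time: for each $N\in\N$ fix the uniform partition $0=t^N_0<\cdots<t^N_N=T$ of mesh $\tau_N=T/N$ and define iteratively $y^N_0\colonequals y^0$ and
\[
y^N_k\in\argmin_{y\in Y}\bigl(\E(t^N_k,y)+\rcal(y-y^N_{k-1})\bigr),\qquad k=1,\dots,N.
\]
The functional above is strictly convex (by positive definiteness of $\mathcal A$), coercive, and weakly lower semicontinuous on the reflexive Hilbert space $Y$, so the direct method yields a unique minimizer. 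The subadditivity of the $1$-homogeneous convex dissipation $\rcal$ combined with the minimality of $y^N_k$ then propagates the stability $y^0\in S(0)$ to $y^N_k\in S(t^N_k)$ for every $k$: for any $\widetilde y\in Y$ the triangle inequality $\rcal(\widetilde y-y^N_{k-1})\le\rcal(\widetilde y-y^N_k)+\rcal(y^N_k-y^N_{k-1})$ converts minimality into stability at $t^N_k$.

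Next I would establish the discrete Lipschitz bound
\[
\|y^N_k-y^N_{k-1}\|_Y\le c^{-1}\,\|\ell(t^N_k)-\ell(t^N_{k-1})\|_{Y^*},
\]
which is the source of \eqref{eq:429:p}. I will use the Euler--Lagrange inclusion $\ell(t^N_k)-\mathcal A y^N_k\in\partial\rcal(y^N_k-y^N_{k-1})$ for the minimizer and the inclusion $\ell(t^N_{k-1})-\mathcal A y^N_{k-1}\in\partial\rcal(0)$ obtained by linearizing the stability of $y^N_{k-1}$. Because $\rcal$ is convex and positively $1$-homogeneous, one has $\partial\rcal(v)\subseteq\partial\rcal(0)$ and $\expect{\xi,v}=\rcal(v)$ whenever $\xi\in\partial\rcal(v)$; consequently $\expect{\ell(t^N_k)-\mathcal A y^N_k,\,y^N_k-y^N_{k-1}}=\rcal(y^N_k-y^N_{k-1})$ while $\expect{\ell(t^N_{k-1})-\mathcal A y^N_{k-1},\,y^N_k-y^N_{k-1}}\le\rcal(y^N_k-y^N_{k-1})$. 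Subtracting the second bound from the first yields $\expect{\mathcal A(y^N_k-y^N_{k-1}),\,y^N_k-y^N_{k-1}}\le\expect{\ell(t^N_k)-\ell(t^N_{k-1}),\,y^N_k-y^N_{k-1}}$, and coercivity of $\mathcal A$ closes the estimate. Summing over $k$ shows that the piecewise affine interpolant $y^N:[0,T]\to Y$ is bounded in $W^{1,1}((0,T);Y)$, so a Helly/Arzel\`a--Ascoli-type selection extracts a subsequence converging pointwise weakly to some $y\in W^{1,1}((0,T);Y)$ with $y(0)=y^0$ satisfying \eqref{eq:429:p}. Pointwise weak convergence preserves stability via lower semicontinuity, and passage to the limit in the one-step estimate $\E(t^N_k,y^N_k)+\rcal(y^N_k-y^N_{k-1})\le\E(t^N_{k-1},y^N_{k-1})-\int_{t^N_{k-1}}^{t^N_k}\expect{\dot\ell(s),y^N(s)}\,ds$ (coming from minimality tested against $y^N_{k-1}$) yields the upper energy inequality $\le$ in \eqref{eq:413:p}; the matching lower bound follows from stability of the limit $y$ via a classical Riemann-sum argument (cf.\ \cite[Prop.~2.1.23]{mielke2015rate}), giving the energy balance equality.

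The main obstacle is uniqueness, which relies crucially on the quadratic structure. The stability condition implies $\ell(t)-\mathcal A y(t)\in\partial\rcal(0)$, while the chain rule applied to \eqref{eq:413:p} yields the pointwise identity $\rcal(\dot y(t))=\expect{\ell(t)-\mathcal A y(t),\dot y(t)}$; together these two facts give the doubly nonlinear force-balance inclusion
\[
0\in\partial\rcal(\dot y(t))+\mathcal A y(t)-\ell(t)\qquad\text{for a.e.~}t\in[0,T].
\]
For two solutions $y_1,y_2$ sharing the initial datum, monotonicity of $\partial\rcal$ then yields $\expect{\mathcal A(y_1-y_2),\,\dot y_1-\dot y_2}\le 0$, so $t\mapsto\tfrac12\expect{\mathcal A(y_1(t)-y_2(t)),\,y_1(t)-y_2(t)}$ is nonincreasing; coercivity of $\mathcal A$ together with $y_1(0)=y_2(0)$ forces $y_1\equiv y_2$. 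The technically delicate point is to justify rigorously the passage from the integrated energy balance to the pointwise identity (and hence to the doubly nonlinear inclusion): this requires the subdifferential chain rule for convex lower semicontinuous functionals applied along absolutely continuous curves in a Hilbert space, which in turn relies on the Lipschitz estimate of the first paragraph to ensure $\dot y\in L^1((0,T);Y)$.
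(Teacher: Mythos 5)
Your argument is essentially the proof of the cited result \cite[Theorem 3.5.2]{mielke2015rate}: the paper itself does not prove Theorem~\ref{T:energetic_solution} but quotes it, and your route---time-incremental minimization, propagation of stability, the discrete Lipschitz estimate from the Euler--Lagrange inclusion and the linearized stability of the previous step, Helly/Arzel\`a--Ascoli limit passage with upper and lower energy estimates, and uniqueness via the doubly nonlinear force-balance inclusion together with monotonicity of $\partial\rcal$ and coercivity of $\mathcal A$---is exactly the standard argument of that reference, carried out correctly (the chain-rule step you flag as delicate is in fact harmless here, since $\E(t,\cdot)$ is a smooth quadratic and $\ell\in W^{1,1}$, $y\in W^{1,1}$). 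The only point stated too loosely is that pointwise weak convergence preserves stability ``via lower semicontinuity'': plain lsc does not suffice because $\rcal(\tilde y-y^N_k)$ sits on the majorizing side, and one needs either the quadratic translation (mutual recovery) trick $\tilde z_N\colonequals\tilde y-y(t)+y^N_k$ or, equivalently, the observation that $S(t)=\big\{y\in Y:\ \ell(t)-\mathcal Ay\in\partial\rcal(0)\big\}$ is convex and closed, hence weakly closed---a fix already implicit in the linearized stability characterization you invoke in the uniqueness step.
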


\begin{lemma}[Equivalence to force balance equation; see \mbox{\cite{mielke2015rate}}]
  Let $(Y,\mathcal E, \rcal)$ be a quadratic ERIS in the sense of Definition~\ref{D:ERIS}, $y^0\in S(0)$, and let $y\in W^{1,1}((0,T);Y)$ satisfy $y(0)=y^0$. Then $y$ is an energetic solution, if and only if 
  for a.a.~$t\in[0,T]$ the force balance equation
  \begin{equation*}
    0 \in \partial \rcal(\dot{y}(t))+D_{y}\E(t,y(t)),
  \end{equation*}
  holds,  where $D_{y}\E(t,\cdot)$ denotes the Gateaux derivative of $\mathcal E(t,\cdot)$ and $\partial\rcal$ the subdifferential. 
\end{lemma}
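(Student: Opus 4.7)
The plan is to establish the equivalence via the standard subdifferential characterization for positively 1-homogeneous convex functionals, combined with a chain rule argument that exploits the quadratic structure of $\mathcal E$.

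The key observation is that, since $\rcal$ is convex, lower semicontinuous and positively 1-homogeneous with $\rcal(0)=0$, the inclusion $\xi\in\partial\rcal(v)$ is equivalent to the conjunction of the two conditions
\begin{equation*}
  \text{(i) }\;\rcal(w)\geq\expect{\xi,w}\text{ for all }w\in Y,\qquad\text{(ii) }\;\rcal(v)=\expect{\xi,v}.
\end{equation*}
I will apply this characterization with $\xi=-D_y\E(t,y(t))$ and $v=\dot y(t)$. A second ingredient is the chain rule for $t\mapsto\E(t,y(t))$ with $y\in W^{1,1}((0,T);Y)$ and $\ell\in W^{1,1}((0,T);Y^*)$: exploiting the quadratic form of \eqref{eq:375:p}, one computes for a.a.~$t$,
\begin{equation*}
  \tfrac{d}{dt}\E(t,y(t))=\expect{D_y\E(t,y(t)),\dot y(t)}-\expect{\dot\ell(t),y(t)}.
\end{equation*}
This identity follows by writing $\E(t,y(t))=\tfrac12\expect{\mathcal A y(t),y(t)}-\expect{\ell(t),y(t)}$, using the symmetry of $\mathcal A$ and the product rule for absolutely continuous scalar functions.

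For the direction \emph{force balance $\Rightarrow$ energetic}, I first derive the global stability: from (i) with $\xi=-D_y\E(t,y(t))$, for any $\tilde y\in Y$ and $v\colonequals\tilde y-y(t)$, the quadratic expansion
\begin{equation*}
  \E(t,\tilde y)-\E(t,y(t))=\expect{D_y\E(t,y(t)),v}+\tfrac12\expect{\mathcal A v,v}\geq \expect{D_y\E(t,y(t)),v}\geq-\rcal(v)
\end{equation*}
yields $y(t)\in S(t)$; continuity of $y$ in $t$ extends this to all $t\in[0,T]$. For the energy balance, I combine (ii) with the chain rule to obtain $\frac{d}{dt}\E(t,y(t))=-\rcal(\dot y(t))-\expect{\dot\ell(t),y(t)}$ and integrate on $[0,t]$. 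For the converse direction, I reverse the argument: from $y(t)\in S(t)$ I test against $\tilde y=y(t)+\delta v$ for $\delta>0$ and arbitrary $v\in Y$, divide by $\delta$, pass to $\delta\to 0^+$ using the 1-homogeneity of $\rcal$ and the fact that the quadratic remainder $\tfrac\delta2\expect{\mathcal A v,v}$ vanishes; this yields condition (i). The energy balance (E), differentiated in $t$ and compared with the chain rule, then produces (ii), and together they give the subdifferential inclusion for a.a.\ $t$.

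The main technical point is to justify the chain rule and the a.e.\ differentiability of $t\mapsto\E(t,y(t))$ under the $W^{1,1}$-regularity of $y$ and $\ell$; this is standard for quadratic functionals on Hilbert spaces but must be invoked carefully to ensure the exceptional null set of $t$'s is common to both (i) and (ii). A minor subtlety is the passage $\delta\to 0^+$ in the stability direction, which uses that $\rcal(\delta v)=\delta\rcal(v)$ allows division by $\delta$ without invoking differentiability of $\rcal$. Everything else is a direct consequence of convex analysis and the quadratic structure encoded in Definition~\ref{D:ERIS}.
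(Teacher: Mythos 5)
The paper does not actually prove this lemma---it is quoted from the cited reference \cite{mielke2015rate}---so the benchmark is the standard textbook argument, and your proposal follows precisely that route: the characterization $\xi\in\partial\rcal(v)\iff\xi\in\partial\rcal(0)$ together with $\rcal(v)=\langle\xi,v\rangle$, the chain rule $\tfrac{d}{dt}\E(t,y(t))=\langle D_y\E(t,y(t)),\dot y(t)\rangle-\langle\dot\ell(t),y(t)\rangle$ valid for $y,\ell\in W^{1,1}$, the exact quadratic expansion for stability, and the differentiated energy balance for condition (ii). All of these steps are correct, and the two directions are assembled in the right way.

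One step is stated too casually. In the direction ``force balance $\Rightarrow$ energetic'' you obtain $y(t)\in S(t)$ only for a.a.\ $t$ and assert that continuity of $y$ upgrades this to all $t\in[0,T]$, which the definition of energetic solution requires. Since $\rcal$ is merely lower semicontinuous and may take the value $+\infty$, the term $\rcal(\tilde y-y(t_n))$ need not converge to $\rcal(\tilde y-y(t_0))$ as $t_n\to t_0$; lower semicontinuity gives an inequality in the unhelpful direction. The standard repair: condition (ii) gives $\rcal(\dot y(t))=-\langle D_y\E(t,y(t)),\dot y(t)\rangle$, hence $\rcal(\dot y(\cdot))\in L^1(0,T)$ because $y$ and $\ell$ are continuous and $\mathcal A$ is bounded; then subadditivity of $\rcal$ (convexity plus $1$-homogeneity) and Jensen's inequality yield $\rcal(\tilde y-y(t_n))\le\rcal(\tilde y-y(t_0))+\rcal(y(t_0)-y(t_n))$ with $\rcal(y(t_0)-y(t_n))\le\int_{t_n}^{t_0}\rcal(\dot y(\tau))\,d\tau\to0$ for $t_n\uparrow t_0$ (approach from the left, since $\rcal$ need not be even), while stability at $t_0=0$ is exactly the hypothesis $y^0\in S(0)$. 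With this insertion your argument is complete and coincides with the proof in the cited reference; the remaining subtleties you flag (common null set for (i) and (ii), division by $\delta$ using $1$-homogeneity) are handled correctly.
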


\begin{table}\label{T:ERIS}
  \centering
  \renewcommand{\arraystretch}{1.5}
  \begin{tabular}{l|l|p{6cm}}
    Symbols & State variables &Description\\
    \hline
    \hline
    $(Y\e,\mathcal E\e,\rcal\e)$& $y\e=(u\e,p\e)$& mean model for random heterogeneous network on scale $\eps$ \\
    \hline
    $(Y\e^\omega,\mathcal E\e^\omega,\rcal\e^\omega)$& & quenched model for random heterogeneous network \\
    \hline
    $(Y_{\hom},\mathcal E_{\hom},\rcal_{\hom})$& $y_{\hom}=(u_{\hom},p_{\hom},\chi_{s,\hom})$& homogenized continuum model\\
    \hline
    $(Y_{L},\mathcal E_{L},\rcal_{L})$& $y_L=(u_{L},p_L,\varphi_L)$& periodic RVE approximation in the mean on scale $L$ stands for the size of the RVE\\
    \hline
    $(Y_{L}^\omega,\mathcal E_{L}^\omega,\rcal_{L}^\omega)$& & quenched, periodic RVE approximation on scale $L$\\
    \hline
    $(\widehat{Y}_{\hom},\widehat{\mathcal E}_{\hom},\widehat{\rcal}_{\hom})$& $\hat y_{\hom}=(\hat p_{\hom},\hat\chi_{s,\hom})$& rate-independent system describing the evolution of the internal variables of the homogenized, continuum model\\
    \hline
    $(\widehat{Y}_{L},\widehat{\mathcal E}_{L},\widehat{\rcal}_{L})$& $\hat y_L=(\hat p_L,\hat \varphi_L)$& rate-independent system describing the evolution of the internal variables of the RVE approximation in the mean\\
    \hline
    $(\widehat{Y}_{L}^\omega,\widehat{\mathcal E}_{L}^\omega,\widehat{\rcal}_{L}^\omega)$& & rate-independent system describing the evolution of the internal variables of the quenched, RVE approximation\\
  \end{tabular}
  \caption{The different evolutionary rate-independent systems (ERIS) considered in the paper}
\end{table}

\section{Tensor product spaces notation}\label{app:tensorproduct}
For Hilbert spaces $H_1,\ldots,H_n$ we denote by $H_1\otimes\cdots\otimes H_n$ the associated tensor product Hilbert space and by $\|\cdot\|_{H_1\otimes\cdots\otimes H_n}$ the associated norm. In the paper we use this notation mainly in situations where the Hilbert spaces are (closed subspaces of) $L^2$-spaces. In that case one can identify the tensor product Hilbert space with a (closed subspace) of the $L^2$-space associated with the product of the measure spaces. For instance, $L^2(\Omega)\otimes L^2(Q)$ (where $(\Omega,\mathcal F,\mathbb P)$ is a separable probability space and $Q\subset\R^d$ is considered with the Lebesgue $\sigma$-algebra) is isometric isomorph to $L^2(\Omega\times Q)$, as well as to the Bochner spaces $L^2(\Omega;L^2(Q))$ and $L^2(Q;L^2(\Omega))$. We therefore identify these spaces. We also remark that the norm of $\varphi\in L^2(\Omega)\otimes L^2(Q)$ takes the form
\begin{equation*}
  \|\varphi\|_{L^2(\Omega)\otimes L^2(Q)}=\mathbb E\Big[\int_Q|\varphi|^2\,dx\Big]^\frac12.
\end{equation*}
The above convention comes in hand when speaking about spaces of the form $L^2_s(\Omega)\otimes L^2(Q)$, since it would be cumbersome to define that space as a subspace of $L^2(\Omega\times Q)^k$.

\end{document}